\definecolor{citegreen}{rgb}{0,0.6,0}
\definecolor{refred}{rgb}{0.8,0,0}
\title{Perelman's Entropy Functional at Type~I Singularities\\of the Ricci Flow}
\author{Carlo Mantegazza and Reto M\"{u}ller}
\date{}
\providecommand{\abs}[1]{\lvert #1\rvert}
\providecommand{\Abs}[1]{\Big\lvert #1\Big\rvert}
\providecommand{\scal}[1]{\langle #1\rangle}
\providecommand{\wt}[1]{\widetilde{#1}}
\providecommand{\wh}[1]{\widehat{#1}}
\newcommand{\Rm}{\mathrm{Rm}}
\newcommand{\Rc}{\mathrm{Rc}}
\newcommand{\RRR}{\mathrm{R}}
\newcommand{\II}{\mathrm{I}}
\newcommand{\HH}{\mathrm{H}}
\newcommand{\RR}{\mathbb{R}}
\newcommand{\NN}{\mathbb{N}}
\newcommand{\sW}{\mathcal{W}}
\newcommand{\sF}{\mathcal{F}}
\newcommand{\sU}{\mathcal{U}}
\newcommand{\eps}{\varepsilon}
\newcommand{\Lap}{\triangle}
\newcommand{\up}{u_{p,T}}
\newcommand{\uph}{\widehat{u}_{p,T}}
\newcommand{\fp}{f_{p,T}}
\newcommand{\fph}{\widehat{f}_{p,T}}
\newcommand{\tp}{\theta_p}
\newcommand{\tfpt}{\theta_{f_{p,T}}}
\newcommand{\Minf}{M_\infty}
\newcommand{\finf}{f_\infty}
\newcommand{\ginf}{g_\infty}
\newcommand{\pinf}{p_\infty}
\newcommand{\dt}{\frac{\partial}{\partial t}}
\newcommand{\ds}{\frac{\partial}{\partial s}}
\def\Xint#1{\mathchoice%
{\XXint\displaystyle\textstyle{#1}}%
{\XXint\textstyle\scriptstyle{#1}}%
{\XXint\scriptstyle\scriptscriptstyle{#1}}%
{\XXint\scriptscriptstyle\scriptscriptstyle{#1}}%
\!\int}%
\def\XXint#1#2#3{{\setbox0=\hbox{$#1{#2#3}{\int}$}%
\vcenter{\hbox{$#2#3$}}\kern-.5\wd0}}%
\def\dashint{\Xint-}%
\theoremstyle{remark}
\theoremstyle{plain}
\newtheorem{lemma}{Lemma}[section]
\newtheorem{prop}[lemma]{Proposition}
\newtheorem{thm}[lemma]{Theorem}
\newtheorem{cor}[lemma]{Corollary}
\newtheorem{defn}[lemma]{Definition}
\newtheorem{ackn}{Acknowledgments\!\!}
\newtheorem{rem}{Remark\!\!}
\newtheorem{rems}{Remarks\!\!}
\numberwithin{equation}{section}
\begin{document}
\maketitle

\begin{abstract}
We study \emph{blow--ups around fixed points} at Type~I singularities of the Ricci flow on closed manifolds using Perelman's $\sW$--functional. First, we give an alternative proof of the result obtained by Naber~\cite{Nab10} and Enders--M\"{u}ller--Topping~\cite{EMT11} that blow--up limits are non--flat gradient shrinking Ricci solitons. Our second and main result relates a \emph{limit $\sW$--density} at a Type~I singular point to the \emph{entropy} of the limit gradient shrinking soliton obtained by blowing--up at this point. In particular, we show that no entropy is lost at infinity during the blow--up process.
\end{abstract}

\section{Introduction and Main Results}

A smooth one--parameter family $(M^n,g(t))$ of complete $n$--dimensional Riemannian manifolds satisfying Hamilton's Ricci flow equation $\dt g(t) = -2\Rc_{g(t)}$ (introduced in~\cite{Ham82}) on a positive time interval $t\in [0,T)$, is said to develop a finite time singularity at time $T<\infty$ if it cannot be smoothly extended past $T$. It is well
known that this is the case if and only if the Riemannian curvature tensor $\Rm$ satisfies $\limsup_{t\to T}\,\sup_M\abs{\Rm(\cdot,t)}_{g(t)}=+\infty$. In the case where all $(M,g(t))$ have bounded curvatures, this is equivalent to
\begin{equation}\label{TypeIlower}
\sup_M\abs{\Rm(\cdot,t)}_{g(t)}\geq\frac{1}{8(T-t)}, \quad \forall t\in[0,T).
\end{equation}
Such a singular Ricci flow on $[0,T)$ is said to be of \emph{Type~I} if there exists a constant $C_{\II}$ satisfying
\begin{equation}\label{TypeIupper}
\sup_M\abs{\Rm(\cdot,t)}_{g(t)}\leq\frac{C_{\II}}{T-t}, \quad \forall t\in[0,T).
\end{equation}
If no such $C_{\II}$ exists, the Ricci flow is said to be of \emph{Type~II}. A natural line to study finite time singularities is to take blow--ups based at a fixed (singular) point $p\in M$. This can be done in two slightly different ways.

\begin{defn}[Dynamical blow--up and blow--up sequences]\label{defseq}
Let $(M,g(t))$ be a solution to the Ricci flow on a finite time interval $[0,T)$.
\begin{enumerate}
\item The pointed flow $(M,\wt{g}(s),p)$ with
\begin{equation}
\wt{g}(s):=\frac{g(t)}{T-t}, \quad s(t):=-\log(T-t)\in [{-\log T},+\infty)
\end{equation}
is called \emph{dynamical blow--up} based at $p\in M$. It satisfies the evolution equation
\begin{equation*}
\ds \wt{g}(s) = (T-t)\dt\Big(\frac{g(t)}{T-t}\Big)=-2\Rc_{g(t)}+\frac{g(t)}{T-t} =-2\Rc_{\wt{g}(s)}+\wt{g}(s).
\end{equation*}
\item For $\lambda_j\to\infty$, we define a \emph{blow--up sequence} based at $p\in M$ to be the family of pointed rescaled Ricci flows $(M,g_j(s),p)$ defined by
\begin{equation}\label{sequential}
g_j(s):=\lambda_j g(T+\tfrac{s}{\lambda_j}), \quad s\in[-\lambda_jT,0).
\end{equation} 
\end{enumerate}
\end{defn}

In~\cite{Per02}, Perelman introduced two important monotone quantities that can be used to study the singularity formation. On the one hand, he defined a \emph{reduced volume quantity} whose monotonicity under the Ricci flow might be seen as a parabolic analog of the classical Bishop--Gromov volume ratio monotonicity. On the other hand, he introduced the \emph{$\sW$--entropy functional} whose monotonicity might be interpreted as an intrinsic analog of Huisken's monotonicity formula for the mean curvature flow~\cite{Hui90}. Both quantities have their advantages and disadvantages compared to the other one. While the reduced volume seems to behave better under topological surgery and is therefore used as the main tool in the study of Ricci flow with surgery on three--manifolds (see e.g.~\cite{Per03,MT07}), the $\sW$--entropy is more useful for regularity, stability and uniqueness questions (see in particular~\cite{HN12, SW10, Ache12}). The latter is mainly due to the fact that $\sW$ satisfies a {\L}ojasievicz--Simon inequality, a result that relies on the property that the functional is locally analytic.

In the case of a Type~I Ricci flow, Naber~\cite{Nab10} and Enders--M\"{u}ller--Topping~\cite{EMT11} proved smooth convergence of a blow--up sequence to a non--trivial gradient shrinking soliton, using a version of Perelman's reduced volume \emph{based at the singular time} $T$. One of the goals of this article is to reprove their result by means of Perelman's $\sW$--entropy functional, given by
\begin{equation}
\sW(g,f,\tau):= \int_M \Big(\tau(\RRR_g+\abs{\nabla f}_g^2)+f-n\Big)\frac{e^{-f}}{(4\pi\tau)^{n/2}}dV_g,
\end{equation}
under the constraint 
\begin{equation}\label{constraint}
\int_M \frac{e^{-f}}{(4\pi\tau)^{n/2}}dV_g=1.
\end{equation}

The main reasons for finding this alternative proof are the following. First, our new approach gives an extra quantitative information on the $\sW$--entropy of the limit gradient shrinking Ricci soliton. Moreover, our proof is a first step towards obtaining uniqueness of \emph{noncompact} blow--up limits, which is still an open problem. Finally, our line of analysis seems to be better suited to deal with the Type~II singularity case, for which a reduced volume based at the singular time is not known to exist.

Let us now describe our method and results in more detail. The most important step is the choice of a suitable test function for the $\sW$--functional. As proposed by Ilmanen, we let $\tau(t):=T-t$ be the remaining time to the finite time singularity and we choose $f(\cdot, t)=\fp(\cdot, t)$ in such a way that $\up(\cdot,t):=\frac{e^{-\fp(\cdot,t)}}{(4\pi\tau)^{n/2}}$ is an {\em adjoint heat kernel based at the singular time} $(p,T)$, that is, a locally smooth limit of solutions of the {\em backward} parabolic PDE
\begin{equation*}
\frac{\partial}{\partial t} u=-\Lap u+\RRR_g u,
\end{equation*}
all converging (as distributions) to a Dirac $\delta$--measure at $p\in M$ at times closer and closer to the singular time $T$ (see Section~\ref{secfpT} for a precise definition and an existence proof). Then, the monotonicity of the $\sW$--entropy functional for solutions of the above backward parabolic PDE (discovered by Perelman, see~\cite{Per02}) allows us to get information on the singular behavior of the flow around the point $p$ (notice that any $\fp$ constructed in this manner satisfies the above normalization constraint \eqref{constraint}).

The main reason for working with these adjoint heat kernels is that they concentrate around the chosen base point and therefore allow us to ``zoom into the singular region''. A different approach, studied by Le and Sesum~\cite{LS10} (assuming also uniformly
bounded scalar curvature), is to work with the minimizers $f_{\min}(t)\in C^\infty(M)$ of the ``frozen'' $\sW$--entropy functional $f\mapsto\sW(g(t),f,T-t)$ under the constraint \eqref{constraint}. However, in \cite{LS10} they consider a pointed limit with basepoints defined to be the minimum points of $f_{\min}(t)$, while for our purpose it is important to be able to fix the singular point $p\in M$ to draw conclusions about the geometry near the singularity (see Section~\ref{secfpT} for more details on this point).

For technical reasons and to avoid the complications due to the 
possible non--uniqueness of the function $\fp$, we actually consider and work with 
the functions $\fph(t):M\to\RR$ obtained by minimization at any fixed $t\in[0,T)$ of the 
$\sW$--entropy among the family of all limits $\fp$.

\begin{defn}\label{minimizer}
Let $\sF_p$ be the family of functions such that $\up(\cdot,t)=\frac{e^{-\fp(\cdot,t)}}{[4\pi(T-t)]^{n/2}}$ is an adjoint heat kernel based at the singular time as above (or as in Definition~\ref{defahk} to be more precise). Since $\sF_p$ is compact (according to
Lemma~\ref{compactlemma}), for every $t\in[0,T)$ there exists a minimizer $\fp^t$ among all $\fp\in\sF_p$ of the $\sW$--entropy functional $\fp\mapsto\sW(g(t),\fp(\cdot,t),T-t)$, at a fixed time $t$. Then, for each $t\in[0,T)$, we define the smooth functions $\fph(t):M\to\RR$ by $\fph(\cdot,t)=\fp^t(\cdot,t)$.
\end{defn}

From Perelman's entropy formula and Li--Yau--Harnack type
inequality~\cite{Per02}, we obtain the following monotonicity and
nonpositivity result.

\begin{prop}[Monotonicity of $\sW(g(t),\fph(t),\tau(t))$]\label{monoprop}
Letting $\fph(t):M\to\RR$ be as in Definition~\ref{minimizer} and $\tau(t):=T-t$, we define
$\tp(t):=\sW(g(t),\fph(t),\tau(t))$. Then, the function $\theta_p:[0,T)\to\RR$ is nonpositive and non--decreasing along the Ricci flow with derivative
\begin{equation}\label{monotonicity}
\dt \tp(t) = 2\tau\int_M \Abs{\Rc_{g(t)} + \nabla^2 \fph(t) -\frac{g(t)}{2\tau}}_{g(t)}^2\frac{e^{-\fph(t)}}{(4\pi\tau)^{n/2}}dV_{g(t)} \geq 0
\end{equation}
at almost every $t\in[0,T)$.
\end{prop}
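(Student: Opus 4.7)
\medskip

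\noindent\emph{Proof proposal.} The starting point is Perelman's classical differential identity: for any fixed $\fp\in\sF_p$, since $\up=e^{-\fp}/[4\pi(T-t)]^{n/2}$ solves the conjugate heat equation $\partial_t\up=-\Lap\up+\RRR_g\up$, a direct computation (integration by parts, using the evolution of $g$, $\fp$ and $\tau=T-t$) gives
\begin{equation*}
\dt\sW(g(t),\fp(\cdot,t),\tau)=2\tau\int_M\Abs{\Rc_{g(t)}+\nabla^2\fp-\frac{g(t)}{2\tau}}_{g(t)}^2\up\,dV_{g(t)}\geq 0.
\end{equation*}
So along any single $\fp\in\sF_p$ the entropy is smooth and non--decreasing. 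The issue is that $\fph(t)$ is only defined pointwise in $t$ as a minimizer over the whole family $\sF_p$, so the identity above does not immediately apply to it.

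\smallskip

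To obtain monotonicity of $\tp(t)=\inf_{\fp\in\sF_p}\sW(g(t),\fp(\cdot,t),\tau(t))$, I would use the standard envelope trick. Given $t_1<t_2<T$, pick $\tilde f\in\sF_p$ with $\sW(g(t_2),\tilde f(\cdot,t_2),T-t_2)=\tp(t_2)$ (which exists by compactness of $\sF_p$, Lemma~\ref{compactlemma}). Since $\tilde f\in\sF_p$ is itself a valid candidate at time $t_1$,
\begin{equation*}
\tp(t_1)\le \sW(g(t_1),\tilde f(\cdot,t_1),T-t_1)\le\sW(g(t_2),\tilde f(\cdot,t_2),T-t_2)=\tp(t_2),
\end{equation*}
where the second inequality is Perelman's classical monotonicity for $\tilde f$. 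In particular $\tp$ is nondecreasing and hence differentiable almost everywhere. For nonpositivity, since $\tp$ is nondecreasing it suffices to show $\limsup_{t\to T^-}\tp(t)\le 0$; this follows from the defining property of the adjoint heat kernel at the singular time, namely $\up(\cdot,t)\rightharpoonup\delta_p$ as $t\to T$, which forces a Euclidean Gaussian profile on small balls around $p$ (after parabolic rescaling) and hence $\sW(g(t),\fp(\cdot,t),T-t)\to 0$ for any $\fp\in\sF_p$, by localizing the $\sW$--integrand.

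\smallskip

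Finally, to establish the derivative identity at almost every $t_0\in[0,T)$, I would run the envelope argument infinitesimally. Pick $\fp^{t_0}\in\sF_p$ attaining the minimum at $t_0$ and set $\phi(t):=\sW(g(t),\fp^{t_0}(\cdot,t),T-t)$. Then $\phi$ is smooth on $[0,T)$, $\phi(t)\ge\tp(t)$ for all $t$, and $\phi(t_0)=\tp(t_0)$. At any $t_0$ where $\tp$ is differentiable, the nonnegative smooth function $\phi-\tp$ attains its minimum at $t_0$, so $\phi'(t_0)=\tp'(t_0)$; plugging in Perelman's identity for $\phi$ and using $\fp^{t_0}(\cdot,t_0)=\fph(\cdot,t_0)$ yields \eqref{monotonicity}. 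The main delicate point in this plan is the nonpositivity step, which relies on controlling the behavior of $\up$ uniformly in $\fp\in\sF_p$ as $t\to T$; everything else reduces to the classical Perelman computation combined with the infimum structure of $\tp$.
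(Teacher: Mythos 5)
Your envelope argument for monotonicity and the touching-point argument for the derivative identity are both correct, and together they essentially reproduce what the paper does (the paper phrases the a.e.\ differentiability via a uniform local Lipschitz bound from \eqref{mono1}, but the content is the same).

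The nonpositivity step, however, contains a genuine error. You claim that $\up(\cdot,t)\rightharpoonup\delta_p$ as $t\to T$ forces $\sW(g(t),\fp(\cdot,t),T-t)\to 0$, but this is false and in fact contradicts the central theme of the paper: by monotonicity, the limit $\lim_{t\to T}\theta_p(t)=\Theta(p)$ exists, and it is typically \emph{strictly negative} --- Corollary~\ref{ppp2} shows $\Theta(p)<0$ at every Type~I singular point, and Theorem~\ref{noloss} identifies $\Theta(p)$ with the entropy $\sW(\ginf,\finf)$ of the non--flat limit shrinker. A measure concentrating to a Dirac mass does not make the $\sW$--entropy degenerate to its flat value; the entropy records the rescaled local geometry at the blow--up point, which need not be Euclidean. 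The paper instead obtains nonpositivity directly from Perelman's pointwise LYH--type estimate (Proposition~\ref{nilemma2}): integrating by parts one rewrites $\tfpt(t)$ with the integrand $\tau(R+2\Lap\fp-\abs{\nabla\fp}^2)+\fp-n$, which is nonpositive pointwise, giving $\tfpt(t)\le 0$ for every $t$. If you want an argument closer to your idea, the salvageable version is: for each fixed finite $s_i<T$, the entropy $\sW(g(t),f_{p,s_i}(\cdot,t),s_i-t)$ does tend to $0$ as $t\to s_i^-$ (and is therefore nonpositive for $t<s_i$ by monotonicity), and then one passes to the limit $s_i\to T$ using the smooth local convergence $f_{p,s_i}\to\fp$. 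But the limit you invoked --- letting $t\to T$ with the fixed singular-time kernel $\up$ --- gives $\Theta(p)$, not $0$.
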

This proposition will be proved in Section~\ref{secW}. In particular,
it implies that the limit $\Theta(p):=\lim_{t\to T}\theta_p(t)$ exists
and is nonpositive, we will  call it {\em limit $\sW$--density} at $p\in M$.
From~\eqref{monotonicity}, by a simple
rescaling argument, we see that the dynamical blow--up $\wt{g}(s)$
defined in Point~1 of Definition~\ref{defseq} satisfies
\begin{equation}\label{L2eqdyn}
\lim_{j\to\infty}\int_j^{j+1}\int_M\Abs{\Rc_{\wt{g}(s)}+\nabla^2\wt{f}(s)-\frac{\wt{g}(s)}{2}}^2_{\wt{g}(s)}e^{-\wt{f}(s)}dV_{\wt{g}(s)}\, ds = 0,
\end{equation}
where $\wt{f}(s)=\fph(t(s))$ and $s(t)=-\log(T-t)$ as before. Similarly, taking a compact time interval $[S_0,S_1]\subset(-\infty,0)$ and setting $f_j(s):=\fph(t(s))$ for $s(t)=\lambda_j(t-T)$, we obtain for a blow--up sequence $(M,g_j(s))$ as in formula~\eqref{sequential}
\begin{equation}\label{L2eqseq}
\lim_{j\to\infty} \int_{S_0}^{S_1} \int_M \Abs{\Rc_{g_j(s)} +
  \nabla^2f_j(s) -\frac{g_j(s)}{2\abs{s}}}^2_{g_j(s)}e^{-f_j(s)}dV_{g_j(s)}\, ds
= 0.
\end{equation}
More details about how to derive equations~\eqref{L2eqdyn} and~\eqref{L2eqseq} are provided in Section~\ref{secResc}.

Based on the above, it is natural to expect that a suitable
subsequence of $(M,\wt{g}(s_j),\wt{f}(s_j),p)$ with $s_j\to\infty$, or
of $(M,g_j(-1),f_j(-1),p)$, respectively, converges (possibly in a
weak sense) to a gradient shrinking Ricci soliton
$(\Minf,\ginf,\finf,\pinf)$ for every fixed $p\in M$, that is, a complete Riemannian manifold $(\Minf,\ginf)$ satisfying
\begin{equation}\label{solitoneq}
\Rc_{\ginf}+\nabla^2\finf=\frac{\ginf}{2}.
\end{equation}

In this paper, we prove this for the case of Type~I Ricci flows. The case of general singularities (in low dimensions) will be studied elsewhere. In particular, we will see in Section~\ref{secfpT} that for Type~I Ricci flows we have the bounds
\begin{equation}\label{mainbounds}
\wh{C}e^{-d^2_{g(t)}(p,q)/\wh{C}(T-t)}\leq e^{-\fp(q,t)}\leq \bar{C}e^{-d^2_{g(t)}(p,q)/\bar{C}(T-t)}
\end{equation}
for every $\fp\in\sF_p$ and $(q,t)\in M\times [0,T)$, where $\wh{C}$, $\bar{C}$
are positive constants depending only on $n$ and $C_{\II}$ (see
Proposition~\ref{effectiveprop} and Proposition~\ref{prop.Gaussian}). Once we have the positive bound from below, by uniform estimates in $C^\infty_{loc}(M\times[0,T))$ for
the family of functions $\fph(t)$, we can pass to the limit in equations~\eqref{L2eqdyn} and~\eqref{L2eqseq}, obtaining a complete and smooth limit Riemannian manifold
$(\Minf,\ginf)$ and a nonzero limit term $e^{-\finf}$ which then 
implies that $(\Minf,\ginf)$ satisfies the gradient shrinking soliton equation~\eqref{solitoneq}.

This is stated more precisely in the following theorem, previously obtained by
Naber~\cite{Nab10} and Enders--M\"{u}ller--Topping~\cite{EMT11} using
different ideas (as explained above).

\begin{thm}[Blow--ups at Type~I singularities are shrinkers, cf.~\cite{EMT11,Nab10}]\label{mainthm}
Let $(M,g(t))$ be a compact singular Type~I Ricci flow, that is, a
compact Ricci flow satisfying both inequalities~\eqref{TypeIlower}
and~\eqref{TypeIupper}. Let $p\in M$ and
$\fph(t)$ be as in Definition~\ref{minimizer}. 

\begin{enumerate}
\item Let $(M,\wt{g}(s),p)$ be the dynamical blow--up as in Point~1 of Definition~\ref{defseq} and $\wt{f}(s)=\fph(t)$ for $s=-\log(T-t)$. For every family of disjoint intervals $(a_k,b_k)$ with $\sum_{k\in\NN} (b_k-a_k)=+\infty$ there are $s_j\in\cup_{k\in\NN}(a_k,b_k)$ with $s_j\to\infty$ such that $(M,\wt{g}(s_j),\wt{f}(s_j),p)$ converges smoothly in the pointed Cheeger--Gromov sense to a {\em normalized} gradient shrinking Ricci soliton $(\Minf,\ginf,\finf,\pinf)$, that is, a complete Riemannian manifold $(\Minf,\ginf)$ satisfying~\eqref{solitoneq}, where $\finf:\Minf\to\RR$ is a smooth function with $\int_{\Minf} \frac{e^{-\finf}}{(4\pi)^{n/2}}dV_{\ginf}=1$.

\item Let $\lambda_j\to\infty$, let $(M,g_j(s),p)$ be the corresponding blow--up sequence as in Point~2 of Definition~\ref{defseq}, and set $f_j(s)=\fp(t)$ for $s=\lambda_j(t-T)$. Then there is a subsequence (not relabeled) of indices $j\in\NN$ such that the $(s=-1)$--slices $(M,g_j(-1),f_j(-1),p)$ of the blow--up sequence converge smoothly in the pointed Cheeger--Gromov sense to a normalized gradient shrinking Ricci soliton $(\Minf,\ginf,\finf,\pinf)$, as above.
\end{enumerate}
\end{thm}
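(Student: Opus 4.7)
\quad The strategy is to combine Hamilton's Cheeger--Gromov compactness for the rescaled Ricci flows with smooth convergence of the minimizers $\fph$ guaranteed by the Gaussian bounds~\eqref{mainbounds}, and then pass to the limit in the approximate soliton identity encoded by~\eqref{L2eqdyn} and~\eqref{L2eqseq}. For compactness of the underlying flows, by~\eqref{TypeIupper} the dynamical blow--up $(M,\wt g(s))$ satisfies $|\Rm_{\wt g(s)}|_{\wt g(s)}\leq C_{\II}$ for every $s$, and analogously the sequential blow--ups $(M,g_j(s),p)$ have uniformly bounded Riemann tensor on any fixed compact interval $[S_0,S_1]\subset(-\infty,0)$ once $j$ is large; Perelman's $\kappa$--noncollapsing theorem, invoked at the rescaled scale, then supplies a uniform positive injectivity radius lower bound at the basepoint $p$, and Hamilton's compactness theorem for pointed Ricci flows yields a smooth pointed Cheeger--Gromov subsequential limit $(\Minf,\ginf(\cdot),\pinf)$.

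Next I would choose the times. Integrating~\eqref{monotonicity} in $s$ shows that $s\mapsto\int_M |\Rc_{\wt g}+\nabla^2\wt f - \wt g/2|^2 e^{-\wt f}\,dV_{\wt g}$ is integrable on $[-\log T,+\infty)$. For part~(1), since $E:=\bigcup_k(a_k,b_k)$ has infinite Lebesgue measure, this integrability forces the existence of times $s_j\in E$, $s_j\to\infty$, along which the spatial integrand vanishes; for part~(2), the analogous integrability from~\eqref{L2eqseq} on a small interval about $s=-1$, combined with a mild reparametrization of the sequence $\lambda_j$, yields the same conclusion precisely at $s=-1$. Along a further subsequence I then apply the compactness of the previous paragraph.

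To pass to the limit in $\fph$, the bounds~\eqref{mainbounds} written in the rescaled metric give
\[
\tfrac{1}{\bar C}\,d^2_{\wt g(s)}(p,\cdot) - \log\bar C \;\leq\; \wt f(s) \;\leq\; \tfrac{1}{\wh C}\,d^2_{\wt g(s)}(p,\cdot) - \log\wh C,
\]
a uniform two--sided quadratic control. Since each slice $\fph(\cdot,t)$ equals $\fp^t(\cdot,t)$ for some $\fp^t\in\sF_p$ solving the adjoint heat equation with Gaussian control, standard parabolic regularity upgrades this to uniform $C^\infty_{loc}$ estimates; combined with compactness of $\sF_p$ (Lemma~\ref{compactlemma}) I can extract a further subsequence with $\wt f(s_j)\to\finf$ smoothly on compact subsets, while the lower bound forces $e^{-\finf}>0$ everywhere. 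Letting $j\to\infty$ in the spatial $L^2$ identity and using positivity of the weight then yields the pointwise shrinker equation~\eqref{solitoneq} on $\Minf$. The normalization $\int_{\Minf}(4\pi)^{-n/2}\,e^{-\finf}\,dV_{\ginf}=1$ follows by pulling~\eqref{constraint} back through the Cheeger--Gromov diffeomorphisms and applying dominated convergence with the integrable Gaussian majorant supplied by the upper bound in~\eqref{mainbounds}, together with the at most exponential volume growth granted by bounded curvature.

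In my view the principal obstacle is upgrading the pointwise Gaussian bounds on $\fph$ to genuine $C^\infty_{loc}$ convergence: because $\fph$ is assembled slice--by--slice by minimization over $\sF_p$, it need not itself satisfy the adjoint heat equation, so the regularity must be imported from the underlying elements $\fp^t\in\sF_p$ via compactness of $\sF_p$. A secondary difficulty is securing dominated convergence of the normalization integral when $\Minf$ is noncompact, which again hinges on combining the upper Gaussian bound with the bounded geometry of the approximating flows.
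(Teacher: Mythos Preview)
Your overall scheme—Type~I curvature bounds plus Perelman's $\kappa$--noncollapsing to feed Hamilton's compactness, integrability of the soliton defect to select good times, and the Gaussian \emph{lower} bound on $e^{-\wt f}$ to pass the $L^2$ identity to the limit and conclude~\eqref{solitoneq}—matches the paper's proof essentially step for step.

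Two points where you diverge are worth flagging. First, your normalization argument is genuinely different from the paper's. You propose a tightness argument: the Gaussian \emph{upper} bound from~\eqref{mainbounds} together with Bishop--Gromov (using the lower Ricci bound implied by Type~I) shows that the mass of $e^{-\wt f(s_j)}dV_{\wt g(s_j)}$ outside a ball of radius $R$ is uniformly small, so no mass escapes. This is correct (though ``dominated convergence'' is not quite the right name, since there is no common measure space). The paper, by contrast, deliberately avoids the full Gaussian upper bound and instead deduces normalization in Corollary~\ref{convnorm} by contradiction with Theorem~\ref{noloss}, using only the weak growth bound of Corollary~\ref{rinf}; their motivation (see the remarks following Theorem~\ref{noloss}) is that the weaker bound may survive in the Type~II setting. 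Your route is more direct but buys less generality.

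Second, for part~(2) your ``mild reparametrization of the sequence $\lambda_j$'' is not quite legitimate: the theorem fixes the sequence $\lambda_j$ and only permits passing to a subsequence, so replacing $\lambda_j$ by $\lambda_j|s_j^*|$ proves a slightly different statement. The paper's fix is simply to use the \emph{space--time} Hamilton limit you already set up: passing to the limit in the time--integrated identity~\eqref{L2eqseq} over $[S_0,S_1]\ni -1$ and invoking the Gaussian lower bound forces the limiting integrand to vanish for a.e.\ $s$, hence everywhere by smoothness, in particular at $s=-1$. Since you already have $\ginf(\cdot)$ as a flow, this repair is immediate.
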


To remind the reader of the definition of the Cheeger--Gromov convergence, the precise statement is that there exist an exhaustion of $\Minf$ by open sets $U_j$ containing $\pinf$ and smooth embeddings $\phi_j:U_j\to M$ with $\phi_j(\pinf)=p$
such that $(\phi_j^*\wt{g}(s_j),\phi_j^*\wt{f}(s_j))$ or $(\phi_j^*g_j(s),\phi_j^*f_j(s))$, respectively, converge smoothly to $(\ginf,\finf)$ on every compact subset of $\Minf$.

Combining this result with the bounds for $e^{-\fph(t)}$ in~\eqref{mainbounds}, we get a new proof of the following nontriviality statement which was previously obtained by Enders--M\"{u}ller--Topping~\cite{EMT11}, using Perelman's pseudolocality theorem (see~\cite{Per02}).

\begin{thm}[Nontriviality of blow--up limits around singular points, cf.~\cite{EMT11}]\label{nontriv} 
If in addition to the assumptions of Theorem~\ref{mainthm} the point
$p\in M$ is a singular point, that is, there does not exist any neighborhood
$U_p\ni p$ on which $\abs{\Rm(\cdot,t)}_{g(t)}$ stays bounded as $t\to
T$, then the limits obtained in Theorem~\ref{mainthm} are non--flat.
\end{thm}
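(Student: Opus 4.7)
I would argue by contradiction: suppose some subsequence in Theorem~\ref{mainthm} yields a limit soliton $(\Minf,\ginf,\finf,\pinf)$ with $\ginf$ flat. From~\eqref{solitoneq} we then get $\nabla^2 \finf = \ginf/2$, so on the universal cover $\RR^n$ of $\Minf$ the potential must equal $|x-x_0|^2/4$ up to an additive constant. For a smooth descent to $\Minf$ the deck group must fix $x_0$ and act freely, hence must be trivial. Thus $(\Minf,\ginf)$ is isometric to Euclidean space and $\finf(x) = |x-x_0|^2/4$, with the constant fixed by the normalization $\int_{\Minf} e^{-\finf}/(4\pi)^{n/2}\,dV_{\ginf} = 1$.

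For this Gaussian shrinker a direct computation gives $\sW(\ginf,\finf,1) = 0$ (the equality case of the Euclidean Gaussian log-Sobolev inequality, as observed by Perelman). Using the upper and lower Gaussian bounds~\eqref{mainbounds}---which scale naturally under the blow-up and provide uniform Gaussian tails for the integrand of $\sW$---together with the $C^\infty_{\mathrm{loc}}$ convergence of $\fph$ under the Cheeger--Gromov embeddings $\phi_j$, I can pass to the limit in the scale-invariant functional:
\begin{equation*}
\tp(t_j) \,=\, \sW(g_j(-1), f_j(-1), 1) \,\longrightarrow\, \sW(\ginf,\finf,1) = 0.
\end{equation*}
Combined with the monotonicity in Proposition~\ref{monoprop}, this forces $\Theta(p) = 0$.

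Feeding $\Theta(p)=0$ back into the integrated form of~\eqref{monotonicity} shows that the soliton deficit $|\Rc_{g(s)} + \nabla^2 \fph(s) - g(s)/(2\tau(s))|^2$, weighted against $e^{-\fph}/(4\pi\tau)^{n/2}$, has vanishing integral on $M\times[t,T)$ as $t\to T$. The lower Gaussian bound in~\eqref{mainbounds} keeps this weight pointwise positive on balls of radius $\sim\sqrt{T-t}$ around $p$, so the Ricci flow approaches the Euclidean self-similar solution at every parabolic scale around $p$. Combined with the uniform Type~I bound $(T-t)|\Rm|_{g(t)} \leq C_{\II}$ and the Gaussian tail concentrating the analysis at $p$, this must propagate to a uniform curvature bound $|\Rm|_{g(t)} \leq C$ on some fixed neighborhood $U_p \ni p$ for all $t < T$, contradicting the assumption that $p$ is a singular point.

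The main obstacle is precisely this last propagation step: going from the integrated vanishing of the soliton deficit and the flat scale-$\sqrt{T-t}$ Cheeger--Gromov convergence up to a pointwise curvature bound on a fixed-radius neighborhood of $p$. This is the step effected by Perelman's pseudolocality theorem in~\cite{EMT11}; the present plan instead exploits the full quantitative Gaussian bound~\eqref{mainbounds} on $\uph$, which is strong enough to localize the $\sW$-based analysis simultaneously at every parabolic scale and so replaces pseudolocality.
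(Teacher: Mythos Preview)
Your first two steps --- identifying a flat limit as the normalized Gaussian shrinker with $\sW=0$, and then using the Gaussian tail bounds to pass the $\sW$-functional to the limit and conclude $\Theta(p)=0$ --- are correct and coincide exactly with the paper's route (this is Theorem~\ref{noloss} and Corollary~\ref{thetaprop}, together with Points~1 and~5 of Lemma~\ref{solprop}).

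The third step, however, is a genuine gap. You correctly name the obstacle but do not resolve it. Knowing that every parabolic rescaling \emph{centered at $p$} converges to flat $\RR^n$ only controls the geometry at points within distance $O(\sqrt{T-t})$ of $p$; it says nothing about a sequence $p_i\to p$ with $d_{g(t_i)}(p,p_i)/\sqrt{T-t_i}\to\infty$, and the Gaussian weight $e^{-\fph}$ is exponentially small at such points, so the vanishing of the weighted soliton-deficit integral carries no information there. The assertion that the bounds~\eqref{mainbounds} ``localize the analysis simultaneously at every parabolic scale and so replace pseudolocality'' is not an argument --- it is a restatement of the hope.

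The paper's way around this is to \emph{move the basepoint}. If $p$ is a Type~I singular point, pick $p_i\to p$, $t_i\to T$ with $(T-t_i)\abs{\Rm(p_i,t_i)}\geq\delta>0$ and rescale around $p_i$ rather than $p$. The continuity of $\theta(\cdot,t)$ at $p$ (Lemma~\ref{thcon}) gives $\theta(p_i,t_i)\to\Theta(p)=0$; a uniform time-derivative bound on the entropy integrand (from the estimates of Section~\ref{secfpT}) then forces the single-time soliton deficit at $(p_i,t_i)$ to be small, so the pointed limit based at $p_i$ is a normalized shrinker with $\sW=0$ (via Theorem~\ref{noloss} and Corollary~\ref{convnorm} applied with moving basepoints), hence flat --- contradicting $\abs{\Rm_{\wt{g}(s_i)}(p_i)}\geq\delta$. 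This is Theorem~\ref{ppp1}. Even so, this argument only rules out \emph{Type~I} singular points directly; the coincidence $\Sigma=\Sigma_{\II}$ of the general singular set with the Type~I singular set is still quoted from~\cite{EMT11}.
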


Along the way of proving the Theorems~\ref{mainthm} and~\ref{nontriv}, we will also obtain the following new and main result. We state (and prove) this only for the case of dynamical blow--ups, but all the arguments can be easily adopted to the case of sequential blow--ups.

\begin{thm}[No loss of entropy]\label{noloss}
For any sequence of pointed rescaled manifolds $(M,\wt{g}(s_j),p_j)$
and functions $\wt{f}(s_j)=\widehat{f}_{p_j,T}(t(s_j))$
converging locally smoothly to some gradient shrinking Ricci soliton
$(\Minf,\ginf,\pinf)$ and relative potential function $\finf:\Minf\to\RR$, we have
\begin{align*}
\sW(\ginf,\finf)&:=\int_{\Minf} \big(\RRR_{\ginf}
+\abs{\nabla \finf}^2_{\ginf}+\finf-n\big)
\frac{e^{-\finf}}{(4\pi)^{n/2}}\,dV_{\ginf}\\
&=\lim_{j\to\infty}\int_{M}\big(\RRR_{\wt{g}(s_j)}
+\abs{\nabla\wt{f}(s_j)}^2_{\wt{g}(s_j)}
+\wt{f}(s_j)-n\big)\,\frac{e^{-\wt{f}(s_j)}}{(4\pi)^{n/2}}\,dV_{\wt{g}(s_j)}.
\end{align*}
\end{thm}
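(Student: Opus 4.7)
The aim is to justify passing to the limit inside the $\sW$--integral by proving uniform integrability of the integrand across the sequence. The two main ingredients are the Gaussian bounds on $e^{-\fph}$ from \eqref{mainbounds} and a uniform gradient estimate for $\wt f(s_j)$; together they dominate the integrand by a Gaussian-type weight, after which Cheeger--Gromov local smooth convergence yields convergence of the full integral.

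\emph{Gaussian and curvature bounds on the rescaled flow.} Under the parabolic rescaling $\wt g(s)=g(t)/(T-t)$ (so that $\tau=1$ in the rescaled picture), applying \eqref{mainbounds} at each basepoint $p_j$ yields
\[
\wh C\,e^{-d^2_{\wt g(s_j)}(p_j,q)/\wh C}\;\le\;e^{-\wt f(s_j)(q)}\;\le\;\bar C\,e^{-d^2_{\wt g(s_j)}(p_j,q)/\bar C},
\]
uniformly in $j$, so that $-K_1\le \wt f(s_j)(q)\le K_2\bigl(1+d^2_{\wt g(s_j)}(p_j,q)\bigr)$. The Type~I hypothesis also rescales to a uniform curvature bound $\abs{\Rm_{\wt g(s_j)}}\le C_\II$, whence uniform Bishop--Gromov volume comparison on each $(M,\wt g(s_j))$.

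\emph{Gradient estimate and uniform tail bound.} Next I would establish $\abs{\nabla \wt f(s_j)}^2_{\wt g(s_j)}(q)\le K_3\bigl(1+d^2_{\wt g(s_j)}(p_j,q)\bigr)$, uniformly in $j$, via a Hamilton-type gradient estimate for positive solutions of the conjugate heat equation on a bounded-curvature Ricci flow, applied to $u_j=e^{-\wt f(s_j)}/(4\pi)^{n/2}$ on parabolic cylinders of controlled size and fed by the Gaussian upper bound above. Combined with the previous step, this dominates the $\sW$--integrand pointwise by $K_4(1+d^2)e^{-d^2/K_5}$; volume comparison then makes this majorant integrable with tail $\int_{M\setminus B_R(p_j)}$ vanishing as $R\to\infty$, uniformly in $j$.

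\emph{Passing to the limit; main obstacle.} For each fixed $R$, Cheeger--Gromov smooth convergence on $B_R(\pinf)$ together with $\wt f(s_j)\to\finf$ smoothly yields smooth, in particular $L^1$, convergence of the $\sW$--integrand on $B_R$. Dominated convergence on $B_R$ combined with the uniform tail estimate above then yields the claimed identity. The same argument applied to the densities $u_j$ alone shows $\int_{\Minf} e^{-\finf}/(4\pi)^{n/2}\,dV_{\ginf}=1$, so $\finf$ is admissible for $\sW(\ginf,\finf)$. The delicate step is the uniform gradient estimate: the Gaussian bounds alone do not control $\abs{\nabla \wt f}$, so one needs a differential Harnack argument on the rescaled flow. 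Once that is in hand, the remaining steps are routine dominated convergence and volume comparison.
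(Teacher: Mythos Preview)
Your approach is plausible and would likely succeed, but it is genuinely different from the paper's and more laborious at the key step. The paper does \emph{not} attempt a dominated convergence argument with a global Gaussian majorant. Instead it proves the two inequalities separately by exploiting \emph{signs} of the integrand:
\begin{itemize}
\item For $\limsup \leq \sW(\ginf,\finf)$, one integrates by parts so that the integrand becomes $\RRR+2\Lap\wt f-|\nabla\wt f|^2+\wt f-n$, which is \emph{pointwise nonpositive} by Perelman's differential Harnack inequality (Proposition~\ref{nilemma2}); upper semicontinuity under local smooth convergence then gives the inequality immediately, with the identification on the limit via Lemma~\ref{solprop}\,(\ref{sol2}).
\item For $\liminf \geq \sW(\ginf,\finf)$, one splits $M$ into a fixed ball $\widetilde B^j_{\bar r}(p_j)$ and its complement, where $\bar r$ is the radius from Corollary~\ref{rinf}. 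On the ball the integral converges by smooth convergence; on the complement one checks that the \emph{original} integrand $\RRR+|\nabla\wt f|^2+\wt f-n$ is eventually nonnegative (using $\wt f\geq 3n$ from Corollary~\ref{rinf} and $\RRR_{\wt g(s_j)}\geq -n$ for large $j$), so lower semicontinuity applies.
\end{itemize}
This completely bypasses the gradient estimate you flag as delicate: no bound on $|\nabla\wt f|^2$ outside a fixed ball is ever needed, because that term appears with a favourable sign in both halves of the argument. In particular the paper uses only the weak growth bound of Corollary~\ref{rinf}, not the full Gaussian upper bound; this is deliberate (see Remark~\ref{rem1}) since the weak bound may survive beyond the Type~I setting, whereas your dominated convergence scheme is tied to the Gaussian estimates in \eqref{mainbounds}. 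Your route would work once a uniform Hamilton--type bound $|\nabla\wt f|^2\le K_3(1+d^2)$ is established, but that step is nontrivial and, as the paper shows, unnecessary.
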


Note that $\sW(\ginf,\finf)$ is well--defined, as explained in Section~\ref{secShrinker} below.

\begin{rems}\ 
\begin{enumerate}
\item For simplicity, we have assumed in the three theorems above that our Type~I Ricci flow $(M,g(t))$ is compact. However, up to technical modifications, all our results should go though in the complete, non--compact case assuming that the initial manifold is uniformly non--collapsed. In particular, the bounds for $f_{p,T}$ in \eqref{mainbounds} ensure that the $\sW$--density is finite in the Type~I case also on non--compact manifolds, all integrals exist, and all partial integrations are justified. 

\item\label{rem1} Actually, a much weaker upper bound for $\fp$ than the one in~\eqref{mainbounds} (namely the bound given in Corollary~\ref{rinf}) is sufficient to prove the Theorems~\ref{mainthm}--\ref{noloss}. We will always give proofs using only this weak bound, as there is a chance that one might obtain it also in the Type~II case, where the Gaussian upper bound in~\eqref{mainbounds} seems too strong to hope for.

\item Instead of working with the functions $\fph(t)$, we can also work with some
fixed $\fp\in\sF_p$. Different choices of $\fp$ or of different
sequences $s_j\to\infty$ might
a--priori lead to different gradient shrinking Ricci solitons in the
limit. However, we will see that all the limits based at $p\in M$ must
have the same $\sW$--entropy $\sW(g_\infty,f_\infty)=\Theta(p)$ (see
the comments after Corollary~\ref{thetaprop}).

\item Actually, a slightly stronger version of the conclusion in Theorem~\ref{mainthm} holds true: we do not only have convergence of time--slices, but rather convergence on compact sets in \emph{space--time} in the sense of Hamilton's compactness theorem for Ricci flows~\cite{Ham95}. We leave the details to the reader.

\item This line of analysis of Type~I Ricci flow singularities was suggested in~\cite{CHI} and it is the analogue of the standard way to deal with Type~I singularities of mean curvature flow of hypersurfaces in $\RR^{n+1}$ (at least in the positive mean curvature case), mainly developed by Huisken (see~\cite{Hui90} and also Stone~\cite{Sto94}). In this context, Huisken's monotonicity formula
\begin{equation*}
\frac{d}{dt}\int_{M} \frac{e^{-\frac{\abs{x-x_0}^2}{4(T-t)}}}{[4\pi(T-t)]^{n/2}}\,dV_M = -\int_{M} \Abs{\HH+\frac{\scal{x-x_0\mid\nu}}{2(T-t)}}^2\,\frac{e^{-\frac{\abs{x-x_0}^2}{4(T-t)}}}{[4\pi(T-t)]^{n/2}}\,dV_M
\end{equation*}
plays the role of Perelman's entropy formula~\eqref{permon}. 
In the mean curvature flow case the ``test functions'' are the solutions of the backward heat equation in $\RR^{n+1}$ (in the ambient space) which can be written explicitly --- in contrast to the solutions of the adjoint heat equation on the evolving manifold $(M,g(t))$ used here --- for instance, $u(x,t)=[4\pi(T-t)]^{-n/2}e^{-\frac{\abs{x-x_0}^2}{4(T-t)}}$. Moreover, the actual presence of an ambient space avoids the necessity of the limit procedure that we mentioned above to get a test function converging to a Dirac $\delta$--measure at the singular time.
\end{enumerate}
\end{rems}

The paper is organized as follows. In the next section, we construct the adjoint heat kernels $\up$ based at the singular time as locally smooth limits of adjoint heat kernels $u_{p,s_i}$ based at $s_i\to T$ (see Lemma~\ref{roughlemma}). In the special case of Type~I Ricci flows, we then derive effective bounds from below for $u_{p,s_i}$ and $\up$ in Proposition~\ref{effectiveprop}, suitable to take a limit of~\eqref{L2eqdyn},~\eqref{L2eqseq} in the blow--up procedure. Moreover, we also obtain Gaussian upper bounds for $u_{p,s_i}$ and $\up$ (Proposition~\ref{prop.Gaussian}) using a recent result of Hein and Naber~\cite{HN12}. This Gaussian upper bounds immediately imply the weaker bound for $\fp$ from Corollary~\ref{rinf}, which is the estimate we actually work with (see Remark~\ref{rem1} above). In particular, this weaker estimate is still strong enough to ensure that no $\sW$--entropy is ``lost'' in the blow--up process.

In Section~\ref{secW}, we study the $\sW$--density functions 
$\theta_p(t)=\sW(g(t),\fph(t),\tau(t))$ and prove Proposition~\ref{monoprop} as well
as some related results. Then, we briefly collect some facts about
gradient shrinking Ricci solitons and the properties of their
$\sW$--entropy in Section~\ref{secShrinker}. Finally, combining the results and estimates from Sections~\ref{secfpT}--\ref{secShrinker}, we prove the Theorems~\ref{mainthm},~\ref{nontriv} and~\ref{noloss} in Section~\ref{secResc}.

\begin{ackn} We thank Robert Haslhofer, Hans--Joachim Hein and Miles Simon for fruitful and interesting discussions. Both authors were partially supported by the Italian FIRB Ideas ``Analysis and Beyond''. In addition, RM was partially financed by an Imperial College Junior Research Fellowship. 
\end{ackn}

\section{The Functions $\fp(t)$ --- Existence and Estimates}\label{secfpT}

A natural first approach (see Le and Sesum~\cite{LS10}) would be to work with the minimizers $f_{\min}(t):M\to\RR$ of the "frozen" functional $f\mapsto\sW(g(t),f,T-t)$ over all $f$ satisfying the constraint $\int_M e^{-f}dV=[4\pi(T-t)]^{n/2}$. The minimum is actually attained on closed manifolds by some smooth function and it is known that
Perelman's entropy function $\mu(g(t)) :=\sW(g(t),f_{\min}(t),T-t)$ is nonpositive and
non--decreasing in $t\in[0,T)$ along the Ricci flow. Moreover, the
following estimate holds
\begin{equation*}
\dt \mu(g(t)) \geq 2\tau\int_M \Abs{\Rc_{g(t)} + \nabla^2f_{\min}(t)
-\frac{g(t)}{2\tau}}^2_{g(t)}\frac{e^{-f_{\min}(t)}}{(4\pi\tau)^{n/2}}\,dV_{g(t)}\geq 0,
\end{equation*}
where $\tau=T-t$, see~\cite{Per02,LS10}.

Rescaling the flow $g(t)$, we obtain formulas similar to~\eqref{L2eqdyn}
and~\eqref{L2eqseq}, for instance, by dynamically rescaling the flow as in Definition~\ref{defseq}, we get
\begin{equation*}
\lim_{j\to\infty}\int_j^{j+1}\int_M\Abs{\Rc_{\wt{g}(s)}+\nabla^2 f_{\min}(s)-\frac{\wt{g}(s)}{2}}^2_{\wt{g}(s)} e^{-f_{\min}(s)}dV_{\wt{g}(s)}\,ds = 0,
\end{equation*}
hence, naively, the family of minimizers $f_{\min}(t)$ seems like a good choice for 
singularity analysis. Unfortunately, we do not know whether the minimizers are uniformly bounded above (or equivalently, $e^{-f_{\min}(t)}$ is bounded below) in a
neighborhood of the singular point $p\in M$ which we use as the center of the blow--up. Thus, we could lose the information contained in the integral when taking a limit (and hence fail to conclude the soliton equation~\eqref{solitoneq} for the limit manifold) even in the Type~I case, as the factor $e^{-f_{\min}(s)}$ might go to zero.

It is indeed an interesting question whether or not such a bound
actually exists, that is, whether or not the minimizer $f_{\min}(t)$
asymptotically "sees" any singularity. We actually believe that this is not always the case in general.

To overcome this difficulty, we work with a different choice of ``test 
functions'' which are concentrated around $p\in M$. Namely, we define $f(t)=\fp(t):M\to\RR$ via the requirement that $\up(\cdot,t)=\frac{e^{-\fp(\cdot,t)}}{(4\pi\tau)^{n/2}}$ is a limit of adjoint heat kernels converging to a Dirac $\delta$--measure at $p\in M$, at times closer and closer to the singular time $T$.

In this section, we first prove that the set of such limits is always nonempty and then 
we derive effective estimates in the case of compact Type~I Ricci flows. The nonpositivity and monotonicity results (stated in Proposition~\ref{monoprop}) are proved below in Section~\ref{secW}.

\begin{defn}[Adjoint heat kernels]
Let $(M,g(t))$ be a Ricci flow defined for $t\in[0,T)$. For any $p\in
M$ and $s\in[0,T)$, we denote by $u_{p,s}:M\times[0,s)\to\RR$ the unique
smooth positive solution of the adjoint heat equation
\begin{equation*}
\dt u=-\Lap u+\RRR_g u
\end{equation*}
satisfying $\lim_{t\to s}u_{p,s}(\cdot,t)=\delta_p$ as measures
(here $\delta_p$ is the Dirac $\delta$--measure on $M$ based at
$p$). We call such a function the \emph{adjoint heat kernel based at} $(p,s)$. 
As the functions $u_{p,s}$ are positive, we can 
define $f_{p,s}:M\times[0,s)\to\RR$ by
\begin{equation*}
f_{p,s}(q,t):=-\log(u_{p,s}(q,t))-\frac{n}{2}\log[4\pi(s-t)],
\end{equation*}
that is, $u_{p,s}(q,t)=\frac{e^{-f_{p,s}(q,t)}}{[4\pi(s-t)]^{n/2}}$.
\end{defn}
For an existence and uniqueness proof of the solutions $u_{p,s}$, 
see~\cite[Chapter~24]{RFTA3}. Moreover, the kernels $u_{p,s}$ satisfy
the semigroup property, the equality $\int_M u_{p,s}(\cdot,t)\,dV_{g(t)}=1$ for every $t\in[0,s)$ and a Harnack estimate. A consequence of the latter is the smooth dependence on the point $p\in M$, once restricting $u_{p,s}$ to any compact subset of $M\times[0,s)$ (see again~\cite[Chapter~24]{RFTA3}).

Our first step is to obtain a smooth limit of a sequence of functions $u_{p,s}$
as $s\to T$, using rough standard interior parabolic estimates.

\begin{lemma}[Limits of adjoint heat kernels]\label{roughlemma}
For every sequence of adjoint heat kernels $u_{p,s_i}$ based at
$(p,s_i)$ with $s_i\to T$, there exists a convergent subsequence,
smoothly converging on every compact subset of $M\times[0,T)$ to a
positive and smooth limit function $\up$. Moreover, any $\up$ solves
the adjoint heat equation $\dt u=-\Lap u+\RRR_g u$ on $M\times[0,T)$ and
satisfies $\int_M \up(q,t)\,d V_{g(t)}=1$ for every $t\in[0,T)$.
\end{lemma}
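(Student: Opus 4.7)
The argument is a standard parabolic compactness argument: obtain uniform smooth bounds on the family $\{u_{p,s_i}\}$ on each compact subset of $M\times[0,T)$, extract a diagonal subsequence converging in $C^\infty_{loc}$, and then verify that the required properties transfer to the limit. The one substantive step is the uniform $L^\infty$ bound. Fix $t_0\in(0,T)$, and fix once and for all an intermediate time $t'\in(t_0,T)$. For all sufficiently large $i$ we have $s_i>t'$, and by the semigroup property of the adjoint heat kernel,
\begin{equation*}
u_{p,s_i}(q,t)=\int_M u_{q',t'}(q,t)\,u_{p,s_i}(q',t')\,dV_{g(t')}
\end{equation*}
for every $(q,t)\in M\times[0,t_0]$. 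On such points $t'-t\geq t'-t_0>0$, so $u_{q',t'}(q,t)$ is controlled, uniformly in $q'$, by the standard heat kernel bounds for the adjoint equation $\dt u=-\Lap u+\RRR_g u$ on the smooth Ricci flow restricted to the compact subinterval $[0,t']$ (as collected in~\cite[Chapter~24]{RFTA3}). Combined with $\int_M u_{p,s_i}(q',t')\,dV_{g(t')}=1$, this yields the desired uniform $L^\infty$ bound on $M\times[0,t_0]$, independent of $i$.

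Given the $L^\infty$ bound, interior parabolic Schauder estimates applied to the smooth-coefficient linear equation $\dt u=-\Lap u+\RRR_g u$ produce uniform $C^k$ bounds on $M\times[0,t_0-\delta]$ for every $k\in\NN$ and $\delta>0$. Exhausting $[0,T)$ by such slabs and invoking Arzel\`a--Ascoli together with a diagonal argument, I extract a subsequence converging in $C^\infty_{loc}(M\times[0,T))$ to a smooth nonnegative limit $\up$. Smooth convergence immediately propagates the adjoint heat equation to $\up$, and the mass identity $\int_M\up(\cdot,t)\,dV_{g(t)}=1$ passes to the limit by compactness of $M$ together with uniform convergence on each time slice. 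Strict positivity of $\up$ then follows from the strong maximum principle: the limit is a nonnegative solution of a linear parabolic equation which is not identically zero, since its total integral equals $1$.

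The only point with any content is the first $L^\infty$ bound; everything downstream is textbook parabolic theory. The key feature that makes the uniform bound possible is precisely that $[0,t_0]$ is separated from the singular time $T$: by interposing $t'$, both distances $t'-t_0$ and (for large $i$) $s_i-t'$ are bounded below, which is what allows one to split off the $\delta$-concentration at $(p,s_i)$ and control the remaining smooth piece through a single fundamental-solution estimate on a time interval where the metric is uniformly smooth. No Type~I hypothesis is needed at this stage; that additional structure will enter later, when one seeks the Gaussian estimates of~\eqref{mainbounds}.
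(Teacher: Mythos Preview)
Your proof is correct and follows essentially the same approach as the paper: both use the semigroup property to write $u_{p,s_i}(\cdot,t)$ as an average of adjoint heat kernels $u_{q',t'}(\cdot,t)$ based at an intermediate time $t'\in(t_0,T)$, exploit the unit mass $\int_M u_{p,s_i}(\cdot,t')\,dV=1$ together with uniform control of $u_{q',t'}$ on $M\times[0,t_0]$ to obtain uniform estimates, and then conclude via Arzel\`a--Ascoli, a diagonal argument, and the strong maximum principle. The only cosmetic difference is that the paper reads off all $C^k$ bounds directly from the semigroup formula using smooth dependence of $u_{q',t'}$ on the basepoint $q'$, whereas you first obtain an $L^\infty$ bound and then invoke interior Schauder estimates; both routes are standard and equivalent.
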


\begin{proof}
Let $u_{p,s_i}$ be a sequence as above. For every compact $K\subset M\times [0,T)$, 
we let $\bar{t}=\sup_{(q,t)\in K} t$ and, for $i\in\NN$ large enough, 
we consider the positive functions $\widetilde{u}_{p,s_i}:M\to\RR$ defined by
$\widetilde{u}_{p,s_i}(q)=u_{p,s_i}(q,(T+\bar{t})/2)$.

By the semigroup property of the kernels $u_{p,s}$, we have
$\int_M\widetilde{u}_{p,s_i}\,dV_{g{(T+\bar{t})/2}}=1$ and
\begin{equation*}
u_{p,s_i}(q,t)=\int_M
u_{r,(T+\bar{t})/2}(q,t)\widetilde{u}_{p,s_i}(r)\,dV_{g{(T+\bar{t})/2}}(r).
\end{equation*}
Hence, by the smooth dependence of the kernels on their basepoint, the
compactness of $M$, and the fact that $\int_M \widetilde{u}_{p,s_i}\,dV_{g(T+\bar{t})/2}=1$, we can uniformly estimate all the derivatives of $u_{p,s_i}\vert_K$. By the Arzel\`a--Ascoli theorem, we can then extract a
converging subsequence in $C^\infty(K)$ and, using a diagonal argument, this can clearly be
done for a family of compact subsets exhausting $M\times[0,T)$, yielding a smooth limit function $\up$.

The facts that any limit function $\up$ is nonnegative, 
satisfies the adjoint heat equation $\dt u=-\Lap u+\RRR_g u$ on $M\times[0,T)$, and that 
$\int_M \up(q,t)\,d V_{g(t)}=1$ for every $t\in[0,T)$, are
consequences of the locally smooth convergence.

Finally, the strong maximum principle implies that every function $\up$ cannot vanish at any point, hence they are all strictly positive.
\end{proof}

\begin{rem}
Notice that in this argument, the estimates are independent of the
point $p\in M$, hence the compactness conclusion holds also for any
sequence of functions $u_{p_i,s_i}$ with $s_i\to T$.
\end{rem}

We can now make Definition~\ref{minimizer} more precise.

\begin{defn}[Adjoint heat kernels based at the singular time]\label{defahk}
We call a smooth limit function $\up$ as in Lemma~\ref{roughlemma}, an \emph{adjoint heat kernel based at the singular time $(p,T)$} and we define $\sU_p$ to be the family of all such possible limits. Moreover, we also define the family $\sF_p$ of smooth functions
$\fp:M\times[0,T)\to\RR$ such that $\up=\frac{e^{-\fp}}{[4\pi(T-t)]^{n/2}}$ belongs to $\sU_p$.
\end{defn}

We notice that by the lemma above and the subsequent remark, we obtain the following compactness property.

\begin{lemma}[Compactness of $\sU_p$]\label{compactlemma}
The family of smooth functions $\sU_p$ and the whole union $\sU=\cup_{p\in M}\sU_p$ 
are compact in the topology of $C^\infty$--convergence on
compact subsets of $M\times[0,T)$.
\end{lemma}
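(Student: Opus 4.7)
The assertion splits into $C^\infty_{\mathrm{loc}}$ precompactness and closedness under such limits.

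For precompactness, the estimate in the proof of Lemma~\ref{roughlemma} is already uniform in the basepoint and in the choice of limit. For any compact $K\subset M\times[0,T)$ with $\bar t=\sup_{(q,t)\in K}t$ and $t^*=(T+\bar t)/2$, passing to the limit in the semigroup representation gives, for every $u\in\sU$,
\begin{equation*}
u(q,t)=\int_M u_{r,t^*}(q,t)\,u(r,t^*)\,dV_{g(t^*)}(r),\qquad (q,t)\in K.
\end{equation*}
The kernel $(q,t,r)\mapsto u_{r,t^*}(q,t)$ has all $(q,t)$-derivatives bounded on $K\times M$ in terms of the flow on $[0,t^*]$ alone, and $u(\cdot,t^*)\,dV_{g(t^*)}$ is a probability measure, so $\sU|_K$ is uniformly bounded in $C^\infty(K)$. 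Arzel\`a--Ascoli with a diagonal argument along an exhaustion gives sequential precompactness of $\sU$ (a fortiori of each $\sU_p$) in $C^\infty_{\mathrm{loc}}(M\times[0,T))$.

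For closedness, take $u^{(k)}\in\sU_{p_k}$ with $u^{(k)}\to u^\infty$ locally smoothly and, by compactness of $M$, extract $p_k\to p_\infty$. Each $u^{(k)}$ is itself a $C^\infty_{\mathrm{loc}}$-limit of kernels $u_{p_k,s^{(k)}_i}$ with $s^{(k)}_i\to T$, so a diagonal extraction produces $s_k\to T$ with $u_{p_k,s_k}\to u^\infty$ locally smoothly. Lemma~\ref{roughlemma} applied to the constant-basepoint sequence $u_{p_\infty,s_k}$ yields, along a further subsequence, a limit $\hat u\in\sU_{p_\infty}$; the task is to prove $\hat u=u^\infty$. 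For any $t^*<T$ with $t^*<s_k$ eventually, the $C^\infty_{\mathrm{loc}}$-convergences give $u_{p_k,s_k}(\cdot,t^*)\to u^\infty(\cdot,t^*)$ and $u_{p_\infty,s_k}(\cdot,t^*)\to\hat u(\cdot,t^*)$ in $C^\infty(M)$, so it suffices to prove the basepoint-continuity estimate
\begin{equation*}
\|u_{p_k,s_k}(\cdot,t^*)-u_{p_\infty,s_k}(\cdot,t^*)\|_{C^0(M)}\longrightarrow 0.
\end{equation*}
I would establish this via the adjoint/forward duality: pairing $u_{p,s}$ against a solution $\phi$ of $\partial_t\phi=\Lap\phi$ on the Ricci flow and using $\partial_t dV_{g(t)}=-\RRR\,dV_{g(t)}$ shows that $\int u_{p,s}(\cdot,t)\phi(\cdot,t)\,dV_{g(t)}$ is time-independent, so $u_{p,s}(q,t^*)$ is exactly the forward heat kernel from $(q,t^*)$ to $(p,s)$. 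Standard gradient estimates for positive heat solutions on Ricci flow then bound $|\nabla_p u_{p,s}(q,t^*)|$ uniformly on $M\times[t^{**},T)$ for any $t^{**}>t^*$, yielding the desired $C^0$-bound. Once $u^\infty(\cdot,t^*)=\hat u(\cdot,t^*)$, uniqueness of the adjoint-heat Cauchy problem on $M\times[0,t^*]$ with data at $t=t^*$ forces $u^\infty\equiv\hat u$ on $M\times[0,t^*]$, and letting $t^*\nearrow T$ gives $u^\infty=\hat u\in\sU_{p_\infty}\subset\sU$. Specializing to $p_k\equiv p$ yields compactness of $\sU_p$.

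The main obstacle I foresee is precisely this uniform-in-$s$ basepoint continuity near the singular time: the smooth dependence of $u_{p,s}$ on $p$ for each fixed $s<T$ (cited in the excerpt) does not by itself prevent the modulus of continuity from deteriorating as $s\to T$, and without uniformity one could have $u^\infty$ lie ``outside'' $\sU_{p_\infty}$ in the strict sense of Definition~\ref{defahk}. Recasting $u_{p,s}(q,t^*)$ as a forward heat kernel on a Ricci-flow time-slab bounded away from its initial layer at $s=t^*$ is what decouples the basepoint estimate from the possible curvature singularity at $T$; the remaining ingredients (interior parabolic estimates, diagonal extraction, unique continuation) are classical.
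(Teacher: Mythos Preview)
Your precompactness argument is essentially the paper's: the authors simply cite Lemma~\ref{roughlemma} and the subsequent remark (that the estimates there are uniform in the basepoint), and this is exactly the semigroup representation plus Arzel\`a--Ascoli that you spell out.

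Where you go further than the paper is in closedness. The paper offers no explicit argument that a $C^\infty_{\mathrm{loc}}$--limit of elements of $\sU$ again lies in $\sU$; it treats this as immediate. For each individual $\sU_p$ it \emph{is} immediate: given $u^{(k)}\in\sU_p$ with $u^{(k)}\to u^\infty$, a straight diagonal extraction produces $s_k\to T$ with $u_{p,s_k}\to u^\infty$, and then $u^\infty\in\sU_p$ \emph{by definition}---no basepoint continuity is needed. So for the first claim of the lemma your argument is correct but can be shortened.

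For the union $\sU=\bigcup_p\sU_p$ you have put your finger on a genuine subtlety that the paper glosses over: with $p_k\to p_\infty$ one must show the limit lies in $\sU_{p_\infty}$, which amounts to the uniform-in-$s$ basepoint continuity you isolate. Your route via the forward/adjoint duality is a good one---writing $u_{p,s}(q,t^*)$ as the value at $(p,s)$ of the forward heat solution emanating from $(q,t^*)$ does decouple the question from the behaviour of $u_{p,s}$ near its own pole. The point that needs care is the gradient estimate you invoke: the time interval $[t^{**},s_k)$ runs into the singular time, so a curvature-dependent Li--Yau estimate will not do. You should name a curvature-free bound for positive solutions of $\partial_t\phi=\Lap\phi$ under Ricci flow (Hamilton's matrix/Zhang-type estimates yield $|\nabla\log\phi|$ bounded in terms of $s-t^*$ and $\sup\phi/\phi$ alone), and check that the required $\sup$ is controlled on $[t^{**},s_k]$ for $s_k<T$. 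With that made precise your closedness proof for $\sU$ is complete and supplies a step the paper leaves implicit.
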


A consequence of this lemma is that there exist a ``minimal'' function $\underline{u}:M\times[0,T)\to\RR$ and a ``maximal'' function $\bar{u}:M\times[0,T)\to\RR$ such that $\underline{u}(q,t)=\inf_{u_{p,T}\in \sU}
u_{p,T}(q,t)$ and $\bar{u}(q,t)=\sup_{u_{p,T}\in \sU}u_{p,T}(q,t)$. By the
above compactness properties, it turns out that $\underline{u}$ must
be positive, by strong maximum principle, and 
$\bar{u}:M\times[0,T)\to\RR$ must be locally bounded above.

Then, by this uniform local positive bound from below, it is straightforward to
conclude that the families of smooth functions $\sF_p$ share the same
compactness property.

\begin{cor}[Compactness of $\sF_p$]\label{fboundscor} 
The family of smooth functions $\sF_p$ and the whole union $\sF=\cup_{p\in M}\sF_p$ are compact in the topology of $C^\infty$--convergence on
compact subsets of $M\times[0,T)$.
\end{cor}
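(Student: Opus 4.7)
The plan is to view $\sF$ as the image of $\sU$ under the pointwise map
\[
\Phi:u\mapsto -\log u-\tfrac{n}{2}\log[4\pi(T-t)],
\]
and to deduce compactness of $\sF$ (resp.\ $\sF_p$) from compactness of $\sU$ (resp.\ $\sU_p$) by showing that $\Phi$ is continuous in the $C^\infty_{loc}$ topology on the image. Concretely, given any sequence $\{\fp[i]\}\subset \sF$ with corresponding adjoint heat kernels $\{u_{p_i,T}\}\subset\sU$, I first extract by Lemma~\ref{compactlemma} a subsequence (not relabeled) such that $u_{p_i,T}\to u_\infty$ in $C^\infty_{loc}(M\times[0,T))$ for some $u_\infty\in\sU$; the candidate limit in $\sF$ is then $\finf:=\Phi(u_\infty)$, which is smooth and well--defined as soon as one knows $u_{p_i,T}$ and $u_\infty$ are bounded away from zero on compact sets.

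The main technical step is therefore this uniform lower bound. Fix any compact $K\subset M\times[0,T)$. The evaluation map
\[
K\times\sU\ni((q,t),u)\mapsto u(q,t)\in\RR
\]
is continuous when $\sU$ is endowed with the $C^\infty_{loc}$ (equivalently, the $C^0(K)$) topology; since $K\times\sU$ is compact by Lemma~\ref{compactlemma}, the function attains its minimum at some $((q^*,t^*),u^*)$. By the strong maximum principle (as recorded in the paragraph preceding the corollary), $u^*$ is a strictly positive smooth function, hence
\[
\underline{u}(q,t)\ \geq\ u^*(q^*,t^*)\ =:\ c_K\ >\ 0\qquad\text{on }K.
\]
The same argument (with sup instead of inf) gives $\bar u\le C_K<\infty$ on $K$. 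In particular $c_K\le u_{p_i,T}\le C_K$ on $K$ uniformly in $i$, and the same bounds apply to $u_\infty$.

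Once the uniform two--sided bound on $K$ is in hand, the smoothness of $x\mapsto -\log x$ on $[c_K,C_K]$ together with $u_{p_i,T}\to u_\infty$ in $C^\infty(K)$ gives
\[
\fp[i]\ =\ -\log u_{p_i,T}-\tfrac{n}{2}\log[4\pi(T-t)]\ \longrightarrow\ \finf\quad\text{in }C^\infty(K),
\]
by differentiating the composition and estimating each resulting polynomial in derivatives of $\log u_{p_i,T}$ against derivatives of $u_{p_i,T}$ and against $c_K^{-1}$. Exhausting $M\times[0,T)$ by a countable family of such compacts and running the usual diagonal extraction, I obtain a subsequence of $\{\fp[i]\}$ converging in $C^\infty_{loc}(M\times[0,T))$ to $\finf$. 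Since $u_\infty\in\sU_p$ when all $u_{p_i,T}\in\sU_p$ (resp.\ $u_\infty\in\sU$ in general), $\finf$ lies in $\sF_p$ (resp.\ $\sF$), completing the proof. The only non--routine ingredient is the uniform positivity $\underline u\ge c_K>0$ on compacts, which is exactly where the full compactness of $\sU$ and the strong maximum principle are both needed; the rest is a direct transport of the $C^\infty_{loc}$--convergence through the smooth function $-\log$.
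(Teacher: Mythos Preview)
Your proof is correct and follows essentially the same approach as the paper: use the compactness of $\sU$ (Lemma~\ref{compactlemma}) together with the strong maximum principle to obtain a uniform positive lower bound $\underline{u}\geq c_K>0$ on each compact $K$, and then push the $C^\infty_{loc}$--convergence through the smooth map $u\mapsto -\log u$. One minor remark: the diagonal extraction you perform at the end is unnecessary, since once you have fixed a single subsequence with $u_{p_i,T}\to u_\infty$ in $C^\infty_{loc}$, the convergence $\fp[i]\to\finf$ in $C^\infty(K)$ follows automatically for \emph{every} compact $K$ from the uniform bounds $c_K\le u\le C_K$, without any further passage to subsequences.
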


Another consequence is that the family $\sF_p$ can be characterized as
the union of all the possible smooth 
limits $\fp$ of functions $f_{p,s}$ as $s\to
T$ (in the sense of $C^\infty$--convergence on compact subsets of
$M\times[0,T)$).

As the functions $f_{p,s}$ satisfy the evolution equation
\begin{equation*}
\dt f_{p,s}=-\Lap f_{p,s}+\vert\nabla f_{p,s}\vert^2_{g}-\RRR_g +\frac{n}{2(s-t)},
\end{equation*}
any function $\fp\in\sF_p$ satisfies
\begin{equation}\label{evolutionfp}
\dt \fp=-\Lap \fp+\vert\nabla \fp\vert^2_{g}-\RRR_g +\frac{n}{2(T-t)}.
\end{equation}
Then, the following result of Perelman~\cite[Corollary~9.3]{Per02} 
(see also Ni~\cite{Ni06} for a detailed proof) holds.

\begin{prop}[Perelman~\cite{Per02}]\label{nilemma2}
For any function $f_{p,s}$ such that 
$u_{p,s}(q,t)=\frac{e^{-f_{p,s}(q,t)}}{[4\pi(s-t)]^{n/2}}$ is a positive solution of 
the adjoint heat equation converging to a $\delta$--measure as $t\to s$, we have
\begin{equation*}
(s-t)\big(2\Lap f_{p,s}(q,t) -\abs{\nabla f_{p,s}(q,t)}^2_{g(t)} +\RRR_{g(t)}(q,t)\big)+f_{p,s}(q,t)-n\leq 0,
\end{equation*}
for every $q\in M$ and $t\in[0,s)$. Hence, passing to the limit as $s\to T$,
we obtain for any function $\fp\in\sF_p$ and every $(q,t)\in
M\times[0,T)$ that
\begin{equation*}
(T-t)\big(2\Lap \fp(q,t)
-\abs{\nabla \fp(q,t)}^2_{g(t)}
+\RRR_{g(t)}(q,t)\big)+\fp(q,t)-n\leq 0.
\end{equation*}
\end{prop}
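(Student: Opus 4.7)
The plan is to establish the inequality first for each fundamental solution $u_{p,s}$ with $s\in(0,T)$ fixed, and then obtain the version for $\fp\in\sF_p$ by passing to the limit $s_i\to T$ using the locally smooth convergence from Lemma~\ref{roughlemma}. Since $u_{p,T}>0$ by the strong maximum principle (as already noted after Lemma~\ref{roughlemma}), dividing through by $u_{p,T}$ gives the pointwise statement for $\fp$, so the whole difficulty lies in the finite--$s$ case.

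For fixed $s\in(0,T)$, write $\tau=s-t$ and introduce the Perelman quantity
\begin{equation*}
P(q,t):=\tau\bigl(2\Lap \fp_{,s} - \abs{\nabla \fp_{,s}}_g^2 + \RRR_g\bigr)+\fp_{,s}-n, \qquad v:=P\,u_{p,s},
\end{equation*}
where I abbreviate $f_{p,s}$ as $\fp_{,s}$ (and similarly $u_{p,s}$). The central computation is to show that $v$ is a subsolution of the conjugate heat equation, namely
\begin{equation*}
\Bigl(\dt+\Lap-\RRR_g\Bigr)v=-2\tau\,\Bigl|\Rc_g+\nabla^2\fp_{,s}-\frac{g}{2\tau}\Bigr|_g^2\,u_{p,s}\;\leq\;0.
\end{equation*}
This is a direct but lengthy calculation that combines the evolution equation~\eqref{evolutionfp} for $\fp_{,s}$ with the standard Ricci flow evolutions of $\RRR_g$, $\abs{\nabla\fp_{,s}}_g^2$ and $\Lap\fp_{,s}$; the contracted second Bianchi identity and the Bochner formula conspire to produce precisely the trace--free traceless--Hessian squared norm on the right--hand side. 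The algebraic cancellations are the same as those Perelman records in~\cite{Per02} and that Ni~\cite{Ni06} carries out in detail.

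Once $\Box^{*}v\leq 0$ is established, the conclusion $v\leq 0$ on $M\times[0,s)$ will follow from a backward parabolic maximum principle, provided one checks that $\limsup_{t\to s^-}\sup_M v(\cdot,t)\leq 0$. This terminal behaviour is the main obstacle, because $u_{p,s}$ concentrates to a Dirac measure at $p$. The guiding intuition is the Euclidean model: for the exact Gaussian $f=d^2/(4\tau)$ on $\RR^n$ one has $P\equiv 0$, so on $(M,g(t))$ the contribution to $P$ from the singular leading--order behaviour of $\fp_{,s}$ as $t\to s$ cancels, and the remainder is controlled by the short--time heat kernel asymptotics (Minakshisundaram--Pleijel type expansion, now with a Ricci flow--dependent background metric, as worked out in~\cite{RFTA3}). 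I would therefore follow Ni's integrated approach: test $v$ against an arbitrary positive solution $\psi$ of the forward heat equation $(\dt-\Lap)\psi=0$, note that $\frac{d}{dt}\int_M v\,\psi\,dV_{g(t)}=\int_M\psi\,\Box^{*}v\,dV_{g(t)}\leq 0$, and use the known near--$s$ asymptotics of $u_{p,s}$ together with the fact that $\psi$ is smooth and bounded on $M$ to show $\lim_{t\to s^-}\int_M v\,\psi\,dV_{g(t)}=0$. The arbitrariness of $\psi(\cdot,0)$ then gives $v\leq 0$ pointwise at every $t<s$, hence $P\leq 0$ as required.

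Finally, for any $\fp\in\sF_p$ obtained as a locally smooth limit $f_{p,s_i}\to\fp$ with $s_i\to T$, all three terms $\Lap f_{p,s_i}$, $\abs{\nabla f_{p,s_i}}^2$ and $f_{p,s_i}-n$ converge smoothly on compact subsets of $M\times[0,T)$, and $s_i-t\to T-t$ uniformly on compact $t$--intervals. Passing to the limit in the inequality for $f_{p,s_i}$ proved above yields the stated bound for $\fp$ at every $(q,t)\in M\times[0,T)$, completing the argument.
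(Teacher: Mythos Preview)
The paper does not actually supply a proof of this proposition: it is stated as Perelman's result~\cite[Corollary~9.3]{Per02} with a pointer to Ni~\cite{Ni06} for the details, and then used as a black box. Your proposal is a correct outline of precisely the Perelman--Ni argument the paper is citing: the identity $\Box^{*}v=-2\tau\,\abs{\Rc+\nabla^2 f_{p,s}-\tfrac{g}{2\tau}}^2\,u_{p,s}\leq 0$, the integrated maximum principle against an arbitrary positive solution $\psi$ of the forward heat equation, and the terminal condition $\int_M v\,\psi\,dV\to 0$ coming from short--time heat kernel asymptotics. The passage to the limit $s_i\to T$ using the $C^\infty_{\mathrm{loc}}$--convergence of Lemma~\ref{roughlemma} is exactly how the paper itself obtains the second inequality from the first. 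One small wording issue: to conclude $v(\cdot,t_0)\leq 0$ at an \emph{arbitrary} $t_0<s$ you should allow the test solution $\psi$ to be prescribed at time $t_0$ (not only at time $0$), so that $\psi(\cdot,t_0)$ ranges over all nonnegative smooth functions; this is of course what the argument needs and what Ni does.
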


As a consequence, one gets the following effective lower bounds for $\up\in\sU_p$,
which for simplicity we only prove in the Type~I situation where the proof is very easy (see for example Cao and Zhang~\cite{CZ11} for a more general result).

\begin{prop}[Gaussian lower bounds for $\up$ in the Type~I case]\label{effectiveprop}
Let $(M,g(t))$ be a compact Ricci flow on $[0,T)$ satisfying the Type~I condition~\eqref{TypeIupper}. Then there exists a positive constant $\wh{C}$
depending only on the dimension $n$ and on the Type~I constant
$C_{\II}$, defined by inequality~\eqref{TypeIupper}, such that 
\begin{equation*}
u_{p,s}(q,t)\geq \frac{\wh{C}}{[4\pi(s-t)]^{n/2}}\;e^{-d^2_{g(t)}(p,q)/\wh{C}(s-t)},
\end{equation*}
for every adjoint heat kernel $u_{p,s}$ and every point $(q,t)\in M\times[0,s)$. Hence, writing $\tau=T-t$,
\begin{equation}\label{Gausslower}
\up(q,t)\geq \frac{\wh{C}}{(4\pi\tau)^{n/2}}\;e^{-d^2_{g(t)}(p,q)/\wh{C}\tau},
\end{equation}
for every $\up\in\sU_p$ and $(q,t)\in M\times[0,T)$.
\end{prop}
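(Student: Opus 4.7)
My plan is to apply Perelman's differential Harnack inequality of Proposition~\ref{nilemma2} to the function $f_{p,s}$ along a suitable space--time curve joining $(q,t)$ to the base point $p$, and to use the resulting integral inequality together with the Type~I bounds on curvature and distance distortion to deduce the pointwise upper bound
\begin{equation*}
f_{p,s}(q,t)\leq \frac{d^2_{g(t)}(p,q)}{\wh C(s-t)}+C',
\end{equation*}
with constants depending only on $n$ and $C_{\II}$. Exponentiating this estimate and using $u_{p,s}=e^{-f_{p,s}}/[4\pi(s-t)]^{n/2}$ would then give the claim.

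Concretely, I would fix $(q,t)\in M\times[0,s)$, let $\tau_0:=(s+t)/2$, and choose a curve $\gamma\colon[t,s)\to M$ which is a constant $g(t)$--speed minimizing geodesic from $q$ to $p$ on $[t,\tau_0]$ and satisfies $\gamma\equiv p$ on $[\tau_0,s)$. Setting $\phi(\tau):=f_{p,s}(\gamma(\tau),\tau)$ and combining the evolution equation~\eqref{evolutionfp} with the rearrangement $-\Lap f_{p,s}\geq -\tfrac12\abs{\nabla f_{p,s}}^2+\tfrac12\RRR+\tfrac{f_{p,s}-n}{2(s-\tau)}$ of Proposition~\ref{nilemma2}, a completion--of--the--square step applied to $\scal{\nabla f_{p,s},\gamma'}$ yields
\begin{equation*}
\phi'(\tau)\geq \frac{\phi(\tau)}{2(s-\tau)}-\frac{1}{2}\abs{\gamma'(\tau)}^2_{g(\tau)}-\frac{\RRR(\gamma(\tau),\tau)}{2}.
\end{equation*}
The integrating factor $\sqrt{s-\tau}$ then rewrites this as $\psi'(\tau)\geq -\tfrac12\sqrt{s-\tau}(\abs{\gamma'}^2_{g(\tau)}+\RRR)$ for $\psi(\tau):=\sqrt{s-\tau}\,\phi(\tau)$. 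Integrating from $t$ to $s^-$ and using $\lim_{\tau\to s^-}\psi(\tau)=0$ (discussed below) I would obtain
\begin{equation*}
\sqrt{s-t}\,f_{p,s}(q,t)\leq \frac{1}{2}\int_t^{s}\sqrt{s-\tau}\,\bigl(\abs{\gamma'(\tau)}^2_{g(\tau)}+\RRR(\gamma(\tau),\tau)\bigr)\,d\tau.
\end{equation*}
On $[t,\tau_0]$ the Type~I hypothesis ensures $(T-t)/(T-\tau)\leq 2$ and hence both $\RRR\leq 2n(n-1)C_{\II}/(T-t)$ and a metric distortion estimate $\abs{\gamma'}^2_{g(\tau)}\leq K(n,C_{\II})\abs{\gamma'}^2_{g(t)}$; on $[\tau_0,s)$ one has $\gamma'\equiv 0$, and the elementary bound $\int_{\tau_0}^s\sqrt{s-\tau}/(T-\tau)\,d\tau\leq C\sqrt{s-t}$ (checked by splitting at $u=T-s$ in the substitution $u=s-\tau$) keeps $\int_{\tau_0}^s\sqrt{s-\tau}\,\RRR(p,\tau)\,d\tau$ of order $\sqrt{s-t}$. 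Substituting $\abs{\gamma'}_{g(t)}=2d_{g(t)}(p,q)/(s-t)$ and carrying out the resulting elementary integration yields the desired pointwise bound on $f_{p,s}(q,t)$.

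The main obstacle is justifying $\lim_{\tau\to s^-}\psi(\tau)=0$. Since $\gamma\equiv p$ near $\tau=s$, this reduces to the uniform on--diagonal upper bound $u_{p,s}(p,\tau)\leq C(n,C_{\II})\,[4\pi(s-\tau)]^{-n/2}$, equivalently $f_{p,s}(p,\tau)\geq -C(n,C_{\II})$. I would derive this by parabolic rescaling: the flow rescaled by $(T-\tau)^{-1}$ has uniform curvature bound of order $C_{\II}$ on a unit--length time interval, on which standard on--diagonal adjoint heat kernel upper bounds with constants depending only on $n$ and $C_{\II}$ follow from Moser iteration / Nash--Aronson estimates together with the uniform $\kappa$--non--collapsing at scales $\sqrt{T-\tau}$ provided by the Type~I hypothesis. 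Scaling back then gives the required uniform on--diagonal bound for $u_{p,s}$, completing the argument.
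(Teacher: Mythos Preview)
Your argument is in substance the paper's: both establish
\[
f_{p,s}(q,t)\leq C_1\,\frac{d^2_{g(t)}(p,q)}{s-t}+C_2
\]
by integrating Perelman's differential Harnack inequality along the same test curve (a $g(t)$--geodesic from $q$ to $p$ on the first half of $[t,s]$, then constant at $p$). The paper simply quotes the integrated form $f_{p,s}\leq \ell_{p,s}$ and then bounds the reduced length $\ell_{p,s}$ along this curve; your computation with $\psi(\tau)=\sqrt{s-\tau}\,\phi(\tau)$ is exactly the standard derivation of that integrated inequality, and your curve--integral estimates match the paper's. So the two proofs coincide once the boundary term is handled.

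That boundary term is where your proposal has a genuine gap. From $\psi'\geq -h$ you get $\psi(t)\leq \psi(\tau_1)+\int_t^{\tau_1}h$, and to conclude you need $\limsup_{\tau_1\to s}\psi(\tau_1)\leq 0$, that is, an \emph{upper} bound on $f_{p,s}(p,\tau)$ near $\tau=s$, equivalently an on--diagonal \emph{lower} bound on $u_{p,s}$. The on--diagonal \emph{upper} bound you propose yields only $f_{p,s}(p,\tau)\geq -C$, hence $\liminf\psi\geq 0$, which is the wrong direction. Moreover, invoking $\kappa$--non--collapsing would make the constant depend on $(M,g(0))$ and not just on $n$ and $C_{\II}$, contrary to the statement. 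The correct and much simpler fix is that for each \emph{fixed} $s<T$ the flow has bounded geometry on $[0,s]$, so the standard short--time asymptotic $u_{p,s}(p,\tau)\sim[4\pi(s-\tau)]^{-n/2}$ holds, giving $f_{p,s}(p,\tau)\to 0$ and hence $\psi(\tau)\to 0$. This is a purely qualitative input and introduces no constants; the dependence of $\wh{C}$ on $n$ and $C_{\II}$ alone comes entirely from your (correct) estimate of the curve integral.
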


\begin{proof}
We use Perelman's differential Harnack inequality~\cite{Per02} in the integrated version (see e.g. Corollary 3.16 in~\cite{Mul06}). This yields $f_{p,s}(q,t)\leq \ell_{p,s}(q,t)$, where $\ell_{p,s}$ is Perelman's backwards reduced length given by
\begin{equation*}
\ell_{p,s}(q,t):=\inf_{\gamma}\bigg\{\frac{1}{2\sqrt{s-t}}\int_t^s\sqrt{s-\sigma}\Big(\abs{\tfrac{\partial}{\partial\sigma}\gamma(\sigma)}^2_{g(\sigma)}+\RRR_{g(\sigma)}(\gamma(\sigma))\Big)d\sigma\bigg\},
\end{equation*}
where the infimum is taken over all curves $\gamma:[t,s]\to M$ with
$\gamma(t)=q$ and $\gamma(s)=p$. To estimate $\ell_{p,s}$ from above,
let $\wh{\gamma}(\sigma)$ be a $g(t)$--geodesic from $q$ at time $t$
to $p$ at time $\frac{t+s}{2}$ and $\wh{\gamma}(\sigma)\equiv p$ for
$\sigma\in[\frac{t+s}{2},s]$. Clearly, $\wh{\gamma}(\sigma)$ is a
candidate for the infimum. Using the Type~I condition, which implies
$\abs{\Rc}_{g(t)}\leq \frac{nC_{\II}}{s-t}$ on $[t,\frac{s+t}{2}]$ and
therefore
$\bigl\vert{\tfrac{\partial}{\partial\sigma}\wh{\gamma}(\sigma)}\bigr\vert^2_{g(\sigma)}
\leq e^{2nC_{\II}}\bigl\vert{\tfrac{\partial}{\partial\sigma}\wh{\gamma}(\sigma)}\bigr\vert^2_{g(t)}$,
we estimate
\begin{equation}\label{ellest}
f_{p,s}(q,t)\leq \ell_{p,s}(q,t)\leq C_1\frac{d^2_{g(t)}(p,q)}{s-t}+C_2,
\end{equation}
where $C_1$ and $C_2$ are two positive constants depending only on
$C_{\II}$ and $n$. The claim follows by substituting $f_{p,s}$ in the expression for
$u_{p,s}$ and estimating.
\end{proof}

Next, we prove similar uniform upper bounds on the adjoint heat kernels $u_{p,s}$ 
under the Type~I assumption. This is a consequence of the recent work of Hein and Naber~\cite{HN12}.

\begin{prop}[Gaussian upper bounds for $\up$ in the Type~I case]\label{prop.Gaussian}
Let $(M,g(t))$ be a compact Ricci flow on $[0,T)$ satisfying the Type~I condition~\eqref{TypeIupper}. Then there exists a positive constant $\bar{C}$, depending only on $n$, the Type~I constant $C_{\II}$ and the initial manifold $(M,g(0))$, such
that for any adjoint heat kernel $u_{p,s}$ we have
\begin{equation*}
u_{p,s}(q,t)\leq \frac{\bar{C}}{[4\pi(s-t)]^{n/2}}\;e^{-d^2_{g(t)}(p,q)/\bar{C}(s-t)},
\end{equation*}
for all $q\in M$ and $t\in[0,s)$. Hence, writing $\tau=T-t$,
\begin{equation}\label{Gaussupper}
\up(q,t)\leq  \frac{\bar{C}}{(4\pi\tau)^{n/2}}\;e^{-d^2_{g(t)}(p,q)/\bar{C}\tau},
\end{equation}
for every $\up\in\sU_p$ and $(q,t)\in M\times[0,T)$.
\end{prop}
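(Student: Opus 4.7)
The plan is to apply the Gaussian upper bound for adjoint heat kernels along the Ricci flow established by Hein and Naber~\cite{HN12}, and then to verify that its hypotheses hold uniformly in $s\in[0,T)$ with constants that survive the limit $s\to T$.

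First, I would recall the content of Hein--Naber: for a Ricci flow $(M,g(t))$ on a closed manifold, the adjoint heat kernel $u_{p,s}$ satisfies a pointwise Gaussian upper bound of the form
\[
u_{p,s}(q,t)\leq \frac{C}{[4\pi(s-t)]^{n/2}}\,e^{-d^2_{g(t)}(p,q)/C(s-t)},
\]
with $C$ depending on $n$ and on a uniform lower bound for Perelman's $\mu$--functional on the relevant parabolic scales. The argument goes through Perelman's logarithmic Sobolev inequality, which yields a Nash-type $L^2\to L^\infty$ on-diagonal smoothing estimate $u_{p,s}(q,t)\leq C\,(s-t)^{-n/2}$, subsequently upgraded to the off-diagonal Gaussian tail by a Davies--Grigor'yan perturbation argument adapted to the evolving metric.

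Next, I would verify that the constant can be taken uniform in $s\in[0,T)$, with a quantitative dependence only on $n$, $C_{\II}$, and the initial manifold $(M,g(0))$. The monotonicity of $\mu$ along the Ricci flow gives $\mu(g(t),\tau)\geq \mu(g(0),t+\tau)$ for every $t\in[0,T)$ and $\tau>0$, so the required entropy lower bound on any parabolic scale is inherited from $(M,g(0))$. The Type~I hypothesis~\eqref{TypeIupper} is exactly what is needed to control the small-scale geometry as $s\to T$: under the parabolic rescaling $\bar g_s(\sigma):=(s-t)^{-1}g(t+(s-t)\sigma)$, $\sigma\in[0,1)$, estimate~\eqref{TypeIupper} gives $\abs{\Rm_{\bar g_s}}_{\bar g_s}\leq C_{\II}/(1-\sigma)$, i.e.\ a uniform curvature bound on the rescaled flows independent of $s$. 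Combined with the inherited $\mu$--bound, this produces a Hein--Naber constant $\bar C=\bar C(n,C_{\II},g(0))$ valid simultaneously for every $u_{p,s}$.

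Finally, passing to the limit $s=s_i\to T$ along the defining sequence of Lemma~\ref{roughlemma} and using locally smooth convergence on compact subsets of $M\times[0,T)$, the same Gaussian bound with the same constant $\bar C$ is inherited by every $\up\in\sU_p$, yielding~\eqref{Gaussupper}. I expect the main obstacle to lie in the second step, namely in checking that the entropy and curvature hypotheses of Hein--Naber's estimate are stable under parabolic rescaling as $s\to T$; this is exactly where Type~I is indispensable, and where the dependence on $C_{\II}$ (as opposed to merely $n$ and $g(0)$) enters the constant $\bar C$.
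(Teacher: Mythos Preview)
Your plan is plausible in spirit but does not match what the paper actually does, and your attribution to Hein--Naber is imprecise in a way that matters. The paper does \emph{not} invoke a ready-made pointwise Gaussian upper bound from~\cite{HN12}. What it uses is Theorem~1.30 of~\cite{HN12}, a curvature-free \emph{Gaussian concentration inequality} for the measures $\nu_{p,s}(\cdot)=\int_{(\cdot)}u_{p,s}\,dV_{g(t)}$, namely
\[
\nu_{p,s}(A)\,\nu_{p,s}(B)\;\leq\;\exp\!\Big({-}\tfrac{1}{8(s-t)}\,\mathrm{dist}^2_{g(t)}(A,B)\Big).
\]
Choosing $A=B_{g(t)}(q,r)$ and $B=B_{g(t)}(p,r)$ with $r=\sqrt{s-t}$, the paper turns this into an \emph{averaged} upper bound by (i) bounding $\mathrm{vol}(A),\mathrm{vol}(B)$ below via Perelman's non--collapsing at curvature scale (this is where $C_{\II}$ and $(M,g(0))$ enter), and (ii) bounding $\nu_{p,s}(B)$ below using the Gaussian \emph{lower} bound already proved in Proposition~\ref{effectiveprop}. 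The resulting space--time averaged estimate is then upgraded to a pointwise one by the parabolic mean value inequality, where the Type~I assumption supplies the Ricci lower bound on the relevant parabolic cylinder. So the paper's route is: concentration $+$ non--collapsing $+$ Gaussian lower bound $\Rightarrow$ averaged bound $\Rightarrow$ pointwise via mean value.

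Your route---log-Sobolev $\Rightarrow$ on-diagonal $L^2\!\to\! L^\infty$ bound $\Rightarrow$ Davies--Grigor'yan off-diagonal decay---is a legitimate alternative strategy that appears elsewhere in the literature, but two points deserve care. First, check that the pointwise bound you cite is really stated in~\cite{HN12}; the paper's own use of~\cite{HN12} suggests the output there is the concentration inequality, not a pointwise kernel estimate, so you may be importing a result that is not actually packaged there. Second, the Davies--Grigor'yan step on an \emph{evolving} metric is nontrivial: one must control how $d_{g(t)}$ changes in time to run the exponential-weight argument, and this is precisely where Type~I is needed in your scheme; you gesture at this via the parabolic rescaling, but the actual implementation is the substantive part and should not be left as a black box. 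By contrast, the paper's argument localises the use of Type~I to three concrete places (non--collapsing constant $\kappa$, the lower bound of Proposition~\ref{effectiveprop}, and the Ricci lower bound for the mean value inequality), and has the pleasant feature of recycling the Gaussian lower bound as an input to the upper bound.
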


\begin{proof}
In Theorem 1.30 of~\cite{HN12}, Hein and Naber prove that (in our notation)
\begin{equation*}
\nu_{p,s}(A)\nu_{p,s}(B)\leq \exp\Big({-\frac{1}{8(s-t)}}\mathrm{dist}^2_{g(t)}(A,B)\Big)
\end{equation*}
for all sets $A,B\subseteq M$, where $\mathrm{dist}_{g(t)}$ refers to the usual set distance and
\begin{equation*}
\nu_{p,s}(A):=\int_A u_{p,s}(q,t)dV_{g(t)}(q).
\end{equation*}
Note that this does not require any assumptions on the curvature or
the underlying manifold at all. We choose $A=B_{g(t)}(q,r)$,
$B=B_{g(t)}(p,r)$, with $r:=\sqrt{s-t}$. From Perelman's
non--collapsing at (scalar) curvature scale result (see~\cite{Per02},
or Theorem 2.20 in~\cite{HN12} for the precise version used here), we
know that there exists $\kappa>0$ depending on the Type~I constant
$C_{\II}$ and the value of the $\sW$--functional of the initial
manifold $(M,g(0))$, such that $\mathrm{vol}_{g(t)}(A),
\mathrm{vol}_{g(t)}(B)\geq \kappa r^n$. Hence, we obtain the estimate
\begin{equation*}
\frac{\nu_{p,s}(A)}{\mathrm{vol}_{g(t)}(A)}\leq \frac{1}{\kappa r^n}\;\frac{1}{\nu_{p,s}(B)}\;\exp\Big({-\frac{1}{C_1(s-t)}}d^2_{g(t)}(p,q)+C_1\Big)
\end{equation*}
for some $C_1>0$ depending only on $n$ and $C_{\II}$. Now, using the lower bound from Proposition~\ref{effectiveprop}, we have
\begin{equation*}
\nu_{p,s}(B)=\int_B u_{p,s}(q,t)\,dV_{g(t)}(q)\geq
\frac{\wh{C}e^{-1/\wh{C}}}{[4\pi(s-t)]^{n/2}}\;\mathrm{vol}_{g(t)}(B)\geq
\frac{\kappa\wh{C}e^{-1/\wh{C}}}{(4\pi)^{n/2}}.
\end{equation*}
Plugging this into the above, we find the average integral estimate
\begin{equation}\label{average}
\dashint_{B_{g(t)}(q,r)}\! u_{p,s}(q,t) dV_{g(t)}(q)\leq \frac{C_2}{\abs{s-t}^{n/2}}e^{-d^2_{g(t)}(p,q)/C_2(s-t)}.
\end{equation}
for some $C_2>0$ depending on $n$, $C_1$, $C_{\II}$ and $\kappa$ and thus ultimately on $n$, $C_{\II}$ and the initial manifold $(M,g(0))$. Doing the same for earlier time--slices too, we find a constant $C_3>0$ depending only on $n$, $C_{\II}$ and $(M,g(0))$, such that
\begin{equation*}
\dashint_{[t-r^2,t]}\dashint_{B_{g(t)}(q,r)}\! u_{p,s}(q,\lambda) dV_{g(t)}\, d\lambda \leq \frac{C_3}{\abs{s-t}^{n/2}}e^{-d^2_{g(t)}(p,q)/C_3(s-t)}.
\end{equation*}
The claim then follows from the parabolic mean value inequality, see
for example Theorem 25.2 in~\cite{RFTA3} for precisely the setting we
need here. Remember that the necessary lower Ricci bound on
$[t-r^2,t]\times B_{g(t)}(q,r)$ follows from the Type~I assumption.
\end{proof}

In fact, for the purpose of this paper, we only need the following much weaker growth property of the functions $\fp$, which is a direct consequence of the Gaussian type bounds above and which is strong enough to ensure that we are not ``losing'' $\sW$--entropy along the blow--up process. We prefer to work with this weak bound as there is hope to prove it under much weaker assumptions than the Type~I condition used here, while we do not expect the Gaussian upper bounds from Proposition~\ref{prop.Gaussian} to hold in the Type~II case.

\begin{cor}\label{rinf} 
Let $(M,g(t))$ be a compact Type~I Ricci flow on $[0,T)$. Then, there exists a constant $\bar{r}=\bar{r}(M,g_0)\in\RR^+$ independent
of $p\in M$, such that for every $\fp\in\sF_p$ there holds
$\fp(q,t)\geq 3n$ whenever $d^2_{g(t)}(p,q)\geq(T-t)\bar{r}^2$.
\end{cor}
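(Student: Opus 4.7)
The plan is to extract this corollary directly from the Gaussian upper bound in Proposition~\ref{prop.Gaussian} by taking logarithms and rearranging. Writing $\tau = T-t$ and recalling the identification $u_{p,T}(q,t) = e^{-\fp(q,t)}/(4\pi\tau)^{n/2}$, the upper bound
\begin{equation*}
\frac{e^{-\fp(q,t)}}{(4\pi\tau)^{n/2}} \leq \frac{\bar{C}}{(4\pi\tau)^{n/2}}\,e^{-d^2_{g(t)}(p,q)/\bar{C}\tau}
\end{equation*}
cancels the prefactor and, after taking $-\log$, yields the pointwise lower bound
\begin{equation*}
\fp(q,t) \geq \frac{d^2_{g(t)}(p,q)}{\bar{C}\tau} - \log \bar{C},
\end{equation*}
valid for every $\fp \in \sF_p$ and every $(q,t) \in M\times[0,T)$, with $\bar{C}$ depending only on $n$, $C_{\II}$ and $(M,g(0))$.

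From here the corollary is purely algebraic: I want to pick $\bar{r}>0$ so that whenever $d^2_{g(t)}(p,q)\geq (T-t)\bar{r}^2$, the right-hand side above is at least $3n$. This amounts to requiring $\bar{r}^2/\bar{C} - \log\bar{C} \geq 3n$, which is satisfied by choosing
\begin{equation*}
\bar{r}^2 := \bar{C}\bigl(3n + |\log \bar{C}|\bigr).
\end{equation*}
Since $\bar{C}$ is independent of the basepoint $p\in M$, so is $\bar{r}$, and it depends only on $n$, $C_{\II}$, and $(M,g_0)$ as required.

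There is no real obstacle here — the corollary is essentially a restatement of the Gaussian upper bound in a form convenient for later use, and the only reason to record it separately is the remark the authors make in the text: one expects this much weaker estimate to survive in the Type~II setting (where the full Gaussian bound likely fails), so the subsequent convergence arguments should be written to use only this weaker decay. Accordingly, I would phrase the proof in a single short paragraph and explicitly note that only the Gaussian upper bound of Proposition~\ref{prop.Gaussian} (and not the matching lower bound) is used.
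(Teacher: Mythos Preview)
Your proof is correct and essentially identical to the paper's: both take logarithms in the Gaussian upper bound from Proposition~\ref{prop.Gaussian} to get $\fp(q,t)\geq d^2_{g(t)}(p,q)/(\bar{C}\tau)-\log\bar{C}$, then choose $\bar{r}$ so that $\bar{r}^2/\bar{C}-\log\bar{C}\geq 3n$. The paper sets $\bar{r}^2=\bar{C}(3n+\log\bar{C})$ (implicitly using $\bar{C}\geq 1$), while your $\bar{r}^2=\bar{C}(3n+|\log\bar{C}|)$ is a harmless cosmetic variant.
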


\begin{proof}
Define $\bar{r}:=\big(3\bar{C}n+\bar{C}\log\bar{C})^{1/2}$ with $\bar{C}$ as in Proposition~\ref{prop.Gaussian}. Then $\frac{1}{\bar{C}}\bar{r}^2=3n+\log\bar{C}$, and Proposition~\ref{prop.Gaussian} implies
\begin{equation*}
\fp(q,t)\geq \frac{d^2_{g(t)}(p,q)}{\bar{C}(T-t)}-\log\bar{C}\geq \frac{1}{\bar{C}}\bar{r}^2-\log\bar{C}=3n.\qedhere
\end{equation*}
\end{proof}

\section{The $\sW$--Density Functions}\label{secW}

We define the $\sW$--density functions $\tp:[0,T)\to\RR$ as
modifications of Perelman's entropy function $\mu:[0,T)\to\RR$
(see~\cite{Per02}). Let $(M,g(t))$ be a compact Ricci flow on a finite
time interval $[0,T)$ and let $\sF_p$ be as in
Definition~\ref{defahk} above. For $\sF_p\ni\fp:M\times[0,T)\to\RR$,
we set $\tfpt(t):=\sW(g(t),\fp(t),T-t)$. Then the
\emph{$\sW$--density function} is defined as the infimum
\begin{align*}
\tp(t)&=\inf_{\fp\in\sF_p}\tfpt(t) = \inf_{\fp\in \sF_p}\sW(g(t),\fp(t),\tau)\\
&=\inf_{\fp\in \sF_p} \int_M
\Big(\tau (\RRR_{g(t)}+\abs{\nabla\fp(t)}^2_{g(t)})+\fp(t)-n\Big)
\frac{e^{-\fp(t)}}{(4\pi\tau)^{n/2}}dV_{g(t)},
\end{align*}
where $\tau=T-t$. We will often write $\theta(p,t)=\tp(t)$. Moreover, we also define the following {\em minimal  $\sW$--density function} $\lambda:[0,T)\to\RR$ as $\lambda(t)=\inf_{p\in M} \tp(t)$.

By the compactness property of the families $\sF_p$, these infima are
actually minima, that is, for every time $t\in[0,T)$ and point $p\in
M$ there is always a function in $\sF_p$ realizing the infimum in the
definition of $\tp(t)$. This means that $\fph(t)$ in
Definition~\ref{minimizer} is well--defined. The densities $\tfpt$,
$\tp$ and $\lambda$ are obviously uniformly bounded below by
Perelman's entropy function $\mu(t)=\sW(g(t),f_{\min}(t),T-t)$ 
(see the discussion at the beginning of Section~\ref{secfpT}). Let us
now prove the nonpositivity and monotonicity of these functions.

\begin{proof}[Proof of Proposition~\ref{monoprop}]
By Perelman's entropy formula in~\cite{Per02}, for every normalized function $f:M\times[0,T)\to\RR$ defined in such a way that $u(\cdot,t)=\frac{e^{-f(\cdot,t)}}{(4\pi\tau)^{n/2}}$ satisfies the adjoint heat equation $\dt u=-\Lap u+\RRR_g u$, we have along a Ricci flow $(M,g(t))$
\begin{equation}\label{permon}
\dt \sW(g(t),f(\cdot,t),\tau(t))= 2\tau\int_M \Abs{\Rc_{g(t)}+\nabla^2 f(\cdot,t)-\frac{g(t)}{2\tau}}^2_{g(t)}\frac{e^{-f(\cdot,t)}}{(4\pi\tau)^{n/2}}dV_{g(t)},
\end{equation}
for every $\tau(t)$ with $\dt \tau=-1$. Since these conditions are obviously satisfied by any $\fp\in\sF_p$ and $\tau=T-t$, we obtain the monotonicity formula 
\begin{equation}\label{mono1}
\dt \tfpt(t) = 2\tau\int_M \Abs{\Rc_{g(t)} + \nabla^2 \fp(\cdot,t) -\frac{g(t)}{2\tau}}_{g(t)}^2\frac{e^{-\fp(\cdot,t)}}{(4\pi\tau)^{n/2}}dV_{g(t)} \geq 0.
\end{equation}
This derivative is locally bounded in time, uniformly in $\fp\in\cup_{p\in M}\sF_p$ (by the estimates of Section~\ref{secfpT}) and hence the functions $\tp$ are uniformly locally
Lipschitz, thus differentiable at almost every time $t\in[0,T)$. It is then easy to see that formula~\eqref{monotonicity} holds at every differentiable time, where $\fph(t)$ is the minimizer in the definition of $\tp(t)$.

Finally, the nonpositivity of $\theta_p$ is a consequence of
Proposition~\ref{nilemma2}. Since for any $\fp$ we have
\begin{equation*}
(T-t)\big(2\Lap \fp(q,t)
-\abs{\nabla \fp(q,t)}^2_{g(t)}
+\RRR_{g(t)}(q,t)\big)+\fp(q,t)-n\leq 0,
\end{equation*}
at every point $q\in M$ and $t\in[0,T)$, the same must clearly hold
for all the functions $\fph(t)$ as well. Then, using integration by parts, we get
\begin{align*}
\tfpt(t)&=\int_M \Big(\tau (\RRR_g+\abs{\nabla\fp}^2_g)+\fp-n\Big)\frac{e^{-\fp}} {(4\pi\tau)^{n/2}}dV_g\\
&=\int_M \Big(\tau (\RRR_g+2\Lap\fp-\abs{\nabla\fp}^2_g)+\fp-n\Big)
\frac{e^{-\fp}}{(4\pi\tau)^{n/2}}dV_g\leq 0,
\end{align*}
and the same for $\theta_p(t)$,
\begin{align*}
\theta_p(t)&=\int_M \Big(\tau (\RRR_{g(t)}+\abs{\nabla\fph}^2_{g(t)}) +\fph- n\Big)\frac{e^{-\fph}}{(4\pi\tau)^{n/2}}dV_{g(t)}\\
&=\int_M\Big(\tau (\RRR_{g(t)}+2\Lap\fph-\abs{\nabla\fph}^2_{g(t)})+\fph-n\Big)
\frac{e^{-\fph}}{(4\pi\tau)^{n/2}}dV_{g(t)}\leq 0,
\end{align*}
for every $t\in[0,T)$. This finishes the proof.
\end{proof}

\begin{cor}[Limit $\sW$--density function]
For every $p\in M$, the function $\tp:[0,T)\to\RR$ converges to some
nonpositive value $\Theta(p)$ as $t\to T$. We call this value 
the {\em limit $\sW$--density} of the flow at the point $p\in
M$. The function $\lambda:[0,T)\to\RR$ is also monotone
non--decreasing and nonpositive and it converges to some value $\Lambda$ with
$\Lambda\leq\Theta(p)\leq 0$ for every $p\in M$.
\end{cor}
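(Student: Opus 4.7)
The plan is to reduce both claims to Proposition~\ref{monoprop} together with elementary properties of monotone real-valued functions. For the first assertion, Proposition~\ref{monoprop} already tells us that $\tp:[0,T)\to\RR$ is nonpositive and non-decreasing; since any non-decreasing function bounded above converges to its supremum, the limit $\Theta(p):=\lim_{t\to T}\tp(t)=\sup_{t\in[0,T)}\tp(t)$ exists in $[-\infty,0]$. This yields the existence of $\Theta(p)\leq 0$ with essentially no further work, provided that $\Theta(p)$ is actually finite (see below).

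For the statement about $\lambda(t)=\inf_{p\in M}\tp(t)$, I would argue as follows. First, nonpositivity is immediate from $\tp(t)\leq 0$ for every $p\in M$ and $t\in[0,T)$, so $\lambda(t)\leq 0$ by taking the infimum. Next, to prove monotonicity, fix $t_1<t_2$ in $[0,T)$; for every $p\in M$, Proposition~\ref{monoprop} gives $\lambda(t_1)\leq\tp(t_1)\leq\tp(t_2)$, and taking the infimum over $p$ on the right yields $\lambda(t_1)\leq\lambda(t_2)$. Thus $\lambda$ is non-decreasing and bounded above by $0$, so $\Lambda:=\lim_{t\to T}\lambda(t)$ exists in $[-\infty,0]$. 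Passing to the limit $t\to T$ in the pointwise inequality $\lambda(t)\leq\tp(t)$ then gives $\Lambda\leq\Theta(p)$ for every $p\in M$, establishing the chain $\Lambda\leq\Theta(p)\leq 0$.

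The one point that requires a little care is ensuring that $\Lambda$ (and consequently each $\Theta(p)$) is a genuine real number rather than $-\infty$. The cleanest way is to invoke the observation already made at the start of Section~\ref{secW}: since the minimization defining $\tp(t)$ is taken over the restricted class $\sF_p$ of admissible test functions, while Perelman's entropy $\mu(g(t))=\sW(g(t),f_{\min}(t),T-t)$ is the infimum of $f\mapsto\sW(g(t),f,T-t)$ over \emph{all} $f$ satisfying the constraint~\eqref{constraint}, one has $\tp(t)\geq\mu(g(t))$ for every $p$ and $t$. Since $\mu$ is itself non-decreasing along the flow, this gives $\lambda(t)\geq\tp(t)\geq\mu(g(0))>-\infty$ uniformly in $t$ (and in $p$), so both $\Lambda$ and each $\Theta(p)$ are finite.

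Apart from this uniform lower bound, which is purely bookkeeping, the corollary is a direct consequence of Proposition~\ref{monoprop} and the elementary observation that the pointwise infimum of a family of non-decreasing, uniformly bounded below, nonpositive functions is again non-decreasing and nonpositive. I therefore do not expect any serious obstacle in writing this out.
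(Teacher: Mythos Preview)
Your approach is essentially the same as the paper's: both reduce the corollary to Proposition~\ref{monoprop} together with the elementary fact that a non--decreasing, bounded--above function has a limit, and that the pointwise infimum of a family of non--decreasing nonpositive functions is again non--decreasing and nonpositive. The paper's proof is even terser than yours and does not spell out the finiteness of $\Lambda$, relying instead on the remark (made just before Proposition~\ref{monoprop} in Section~\ref{secW}) that $\tp$ and $\lambda$ are uniformly bounded below by $\mu(g(t))$; your explicit inclusion of this point is fine. One small slip: in your finiteness paragraph you wrote $\lambda(t)\geq\tp(t)$, but of course $\lambda(t)\leq\tp(t)$ by definition; the correct chain is $\tp(t)\geq\mu(g(t))\geq\mu(g(0))$ for every $p$, hence $\lambda(t)\geq\mu(g(0))>-\infty$.
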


\begin{proof}
We have seen above that $\tp:[0,T)\to\RR$ is monotone non--decreasing
and nonpositive (as we will see in Section~\ref{secResc}, the density $\tp$ is
actually always negative), hence the convergence $\tp(t)\to\Theta(p)\leq 0$ 
as $t\to T$ is a trivial consequence. The monotonicity and
convergence of $\lambda$ is also trivial, being the infimum of
a family of nonpositive and monotone non--decreasing functions.
\end{proof}

Integrating the entropy formula~\eqref{monotonicity} in time and setting 
$\uph(t):=\frac{e^{-\fph(t)}}{(4\pi\tau)^{n/2}}$, we get for
every $t_0,t_1\in[0,T)$ with $t_0<t_1$,
\begin{equation*}
\tp(t_1)-\tp(t_0)=2\int_{t_0}^{t_1}\tau\int_M
\Abs{\Rc_{g(t)}+\nabla^2\fph(t)-\frac{g(t)}{2\tau}}^2_{g(t)}\uph(t)
dV_{g(t)}\,dt,
\end{equation*}
and passing to the limit as $t_1\to T$,
\begin{equation}\label{perelman3}
\Theta(p)-\tp(t_0)=2\int_{t_0}^T\tau\int_M
\Abs{\Rc_{g(t)}+\nabla^2\fph(t)-\frac{g(t)}{2\tau}}^2_{g(t)}\uph(t)
dV_{g(t)}\,dt.
\end{equation}
Hence, as $\Theta(p)\leq0$, we have
\begin{equation}\label{perelman4}
\int_0^T\tau\int_M
\Abs{\Rc_{g(t)}+\nabla^2\fph(t)-\frac{g(t)}{2\tau}}^2_{g(t)}\uph(t)
dV_{g(t)}\,dt\leq -\frac{\theta_p(0)}{2}\leq C(M,g_0,T),
\end{equation}
where the constant $C(M,g_0,T)$ is given by
\begin{equation*}
C(M,g_0,T)=-\frac{1}{2}\inf_{p\in M}\inf_{\fp\in \sF_p} \int_M \Big(T\big(\RRR_{g_0}+\abs{\nabla\fp}^2_{g_0}\big)+\fp-n\Big)\frac{e^{-\fp}}{(4\pi T)^{n/2}}\,dV_{g_0}
\end{equation*}
which is finite due to the compactness property of the union $\sF=\cup_{p\in M}\sF_p$. Of course, estimate~\eqref{perelman4} also holds if we replace the minimizer $\fph$ with any choice of $\fp\in\sF_p$ and use formula~\eqref{mono1} instead of~\eqref{monotonicity}.

The main reason for working with $\tp$ instead of $\tfpt$ is its continuous dependence on the point $p\in M$.

\begin{lemma}[Continuous dependence of $\theta_p(t)$ on $p\in M$]\label{thcon}
The $\sW$--density functions $\theta(\cdot,t):M\to\RR$ are continuous for every fixed $t\in[0,T)$. The limit $\sW$--density function $\Theta:M\to\RR$ is lower semicontinuous and nonpositive, hence every point $p\in M$ with $\Theta(p)=0$ is a continuity point.
\end{lemma}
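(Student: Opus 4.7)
The plan is to establish continuity of $\theta(\cdot,t)$ at fixed $t$ by proving lower and upper semi-continuity separately, using the compactness of $\sU=\cup_{p\in M}\sU_p$ from Lemma~\ref{compactlemma} combined with the smooth dependence of the adjoint heat kernels $u_{p,s}$ on their basepoint $p$. The statements about $\Theta$ will then follow from the monotonicity in $t$ of $\tp$ (Proposition~\ref{monoprop}) and soft arguments.

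For lower semi-continuity at $p_0$, I would take $p_i\to p_0$ together with minimizers $\up^{(i)}\in\sU_{p_i}$ realizing $\theta_{p_i}(t)$. By compactness of $\sU$, a subsequence $\up^{(i_k)}$ converges in $C^\infty_{\mathrm{loc}}(M\times[0,T))$ to some $\bar u$. Each $\up^{(i_k)}$ is itself a $C^\infty_{\mathrm{loc}}$-limit of adjoint heat kernels $u_{p_{i_k},\sigma^{(k)}_j}$ with $\sigma^{(k)}_j\to T$, so a standard diagonal extraction produces $u_{p_{i_k},\sigma_k}\to\bar u$ with $\sigma_k\to T$. On any compact $K\subset M\times[0,T)$ with $\sup_K t<T$, Harnack estimates applied via the semigroup representation
\begin{equation*}
u_{p,s}(q,t)=\int_M u_{r,t^*}(q,t)\,u_{p,s}(r,t^*)\,dV_{g(t^*)}(r),\qquad \sup_K t<t^*<s,
\end{equation*}
yield a uniform Lipschitz-type bound $\|u_{p,\sigma}-u_{p_0,\sigma}\|_{C^m(K)}\leq C_K\,d(p,p_0)$ for $\sigma$ close to $T$. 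This lets me substitute $p_0$ for $p_{i_k}$ along the diagonal and obtain $u_{p_0,\sigma_k}\to\bar u$ on $K$; since $K$ was arbitrary, $\bar u\in\sU_{p_0}$. Therefore $\theta_{p_0}(t)\leq\sW(g(t),\bar f(t),T-t)=\lim_k\theta_{p_{i_k}}(t)$, giving $\liminf_i\theta_{p_i}(t)\geq\theta_{p_0}(t)$.

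Upper semi-continuity is the main obstacle. Here I would fix a minimizer $\up^{(0)}=\lim_k u_{p_0,s_k}\in\sU_{p_0}$ for $\theta_{p_0}(t)$ and restrict attention to the single time-slice $M\times\{t\}$. Because $s_k-t$ stays bounded below by $T-\eta-t>0$ for any $\eta<T-t$, the same Harnack argument gives the time-slice estimate $\|u_{p_i,s_k}(\cdot,t)-u_{p_0,s_k}(\cdot,t)\|_{C^m(M)}\leq C(t,m)\,d(p_i,p_0)$, \emph{uniformly in $k$}; combined with $u_{p_0,s_k}(\cdot,t)\to\up^{(0)}(\cdot,t)$, a diagonal choice $k(i)\to\infty$ yields $u_{p_i,s_{k(i)}}(\cdot,t)\to\up^{(0)}(\cdot,t)$ in $C^\infty(M)$. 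The delicate step is replacing this single kernel by an actual element of $\sU_{p_i}$: the cluster set $\sU_{p_i}^t:=\{v(\cdot,t):v\in\sU_{p_i}\}$ of the sequence $\{u_{p_i,s}(\cdot,t)\}_{s\to T}$ is compact in $C^\infty(M)$, so for each fixed $i$ one has $\mathrm{dist}_{C^m(M)}(u_{p_i,s}(\cdot,t),\sU_{p_i}^t)\to 0$ as $s\to T$; choosing $k(i)$ large enough relative to this $i$-dependent rate (which is permissible since only $k(i)\to\infty$ is required for the previous step) produces $\up^{(i)}\in\sU_{p_i}$ with $\up^{(i)}(\cdot,t)\to\up^{(0)}(\cdot,t)$ in $C^\infty(M)$. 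Then $\theta_{p_i}(t)\leq\sW(g(t),\fph^{(i)}(t),T-t)\to\sW(g(t),\fph^{(0)}(t),T-t)=\theta_{p_0}(t)$ closes the argument.

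For the statements about $\Theta$, monotonicity of $\tp$ gives $\Theta(p)=\sup_{t\in[0,T)}\theta_p(t)$, a supremum of continuous functions and hence lower semi-continuous; nonpositivity $\Theta\leq 0$ is inherited from Proposition~\ref{monoprop}. Finally, if $\Theta(p_0)=0$ then lower semi-continuity forces $\liminf_{p\to p_0}\Theta(p)\geq 0$ while $\Theta\leq 0$ everywhere, so $\Theta(p)\to 0=\Theta(p_0)$ and $p_0$ is a continuity point.
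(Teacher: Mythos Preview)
Your proposal is correct and follows essentially the same approach as the paper, which simply asserts that continuity of $\theta(\cdot,t)$ is ``a straightforward consequence of the compactness of the union $\sU=\cup_{p\in M}\sU_p$ in the $C^\infty_{\mathrm{loc}}(M\times[0,T))$--topology'' and then deduces the properties of $\Theta$ exactly as you do (monotone limit of continuous functions, nonpositivity, hence continuity at points where $\Theta=0$). You have unpacked the first sentence into a careful lower/upper semicontinuity argument using the smooth dependence of $u_{p,s}$ on $p$ together with diagonal extractions, which is precisely the mechanism the paper leaves implicit.
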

\begin{proof}
The first statement is a straightforward consequence of the
compactness of the union $\sU=\cup_{p\in M}\sU_p$ in the
$C^\infty_{\mathrm{loc}}(M\times[0,T))$--topology, see
Lemma~\ref{compactlemma}. Because all the functions $\theta(\cdot,t)$ are continuous in $p\in M$, the limit function $\Theta:M\to\RR$ is lower semicontinuous, the final claim is then trivial.
\end{proof}

A consequence is the following corollary.

\begin{cor}
The function $\Theta:M\to\RR$ is identically zero on $M$ if and only
if $\Lambda=0$. In other words, if and only if the functions
$\theta(\cdot,t):M\to\RR$ converge uniformly to zero as $t\to T$.
\end{cor}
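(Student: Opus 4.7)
The plan is to break the statement into three equivalences among the following conditions: (a) $\Theta(p)=0$ for every $p\in M$; (b) $\Lambda=0$; (c) the functions $\theta(\cdot,t):M\to\RR$ converge uniformly to zero as $t\to T$. I would first dispose of the straightforward equivalence (b)$\iff$(c), and then handle (a)$\iff$(b), with the nontrivial direction being (a)$\Rightarrow$(b), which I plan to reduce to a direct application of Dini's theorem.

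For (b)$\iff$(c), since $\theta_p(t)\leq 0$ for every $p\in M$ and every $t\in[0,T)$ by Proposition~\ref{monoprop}, one has $\sup_{p\in M}\abs{\theta_p(t)} = -\inf_{p\in M}\theta_p(t) = -\lambda(t)$. Thus uniform convergence of $\theta(\cdot,t)$ to zero is exactly equivalent to $\lambda(t)\to 0$, i.e.\ $\Lambda=0$. For the easy direction (b)$\Rightarrow$(a), recall from the limit $\sW$--density corollary that $\Lambda\leq\Theta(p)\leq 0$ for every $p\in M$, so $\Lambda=0$ forces $\Theta\equiv 0$.

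The main step is (a)$\Rightarrow$(b). The key observation is that the family $\{\theta(\cdot,t)\}_{t\in[0,T)}$, viewed as functions on $M$, is pointwise monotone non--decreasing in $t$ (again by Proposition~\ref{monoprop}), each $\theta(\cdot,t)$ is continuous on the compact manifold $M$ (by Lemma~\ref{thcon}), and the pointwise increasing limit $\Theta$ is, under assumption (a), the continuous constant function zero. Dini's theorem then applies and yields that $\theta(\cdot,t)\to 0$ uniformly on $M$ as $t\to T$, which gives $\lambda(t) = \inf_{p\in M}\theta_p(t)\to 0$, hence $\Lambda=0$. I do not foresee any real obstacle here; the only point requiring a bit of care is checking that Dini's hypotheses are satisfied in the parameter $t\to T^-$ rather than for a discrete sequence, but this is routine since monotonicity in $t$ allows reduction to any sequence $t_j\to T$. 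Chaining the three implications then yields the full equivalence claimed in the corollary.
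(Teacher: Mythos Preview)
Your proof is correct and follows essentially the same line as the paper: the nontrivial direction $\Theta\equiv 0\Rightarrow\Lambda=0$ is obtained via Dini's theorem using the monotonicity of $\theta_p(t)$ in $t$, the continuity of $\theta(\cdot,t)$ on the compact $M$, and the continuity of the zero limit, while the other implications are dismissed as trivial. The only difference is that you spell out the easy equivalences (b)$\iff$(c) and (b)$\Rightarrow$(a) explicitly, whereas the paper treats them in a single sentence.
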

\begin{proof}
If $\Theta$ is identically zero, by Dini's monotone convergence
theorem, the functions $\theta(\cdot,t)$ uniformly converge to zero as
$t\to T$ and hence $\Lambda=0$. The other implication is trivial.
\end{proof}

\section{Gradient Shrinking Ricci Solitons and their $\sW$--Entropy}\label{secShrinker}

Let $(\Minf,\ginf,\finf)$ be a gradient shrinking Ricci soliton, that
is, a complete, connected Riemannian manifold $(\Minf,\ginf)$ satisfying the relation
\begin{equation*}
\Rc_{\ginf}+\nabla^2\finf=\frac{\ginf}{2},
\end{equation*}
where $\finf:\Minf\to\RR$ is a smooth function.

It is well known that the quantity $a(\ginf,\finf):=\RRR_{\ginf}
+\abs{\nabla\finf}^2_{\ginf}-\finf$ is constant on $M$, it is often called 
{\em auxiliary constant}.

We recall the following growth estimates, originally proved by
Cao--Zhou and Munteanu~\cite{CZ10,Mun09} and improved by 
Haslhofer--M\"{u}ller~\cite{HM11} to the present form.

\begin{lemma}[Potential and volume growth, Lemma~2.1 and~2.2 in~\cite{HM11}]\label{growth}
Let $(\Minf,\ginf,\finf)$ be an $n$--dimensional gradient shrinking Ricci soliton with auxiliary constant $a(\ginf,\finf)$. Then there exists a point $\pinf\in\Minf$ where
$\finf$ attains its infimum and we have the following estimates for
the growth of the potential
\begin{equation*}
\frac{1}{4}\big(d_{\ginf}(x,\pinf)-5n\big)_{\!+}^2\leq \finf(x)-a(\ginf,\finf) \leq
\frac{1}{4}\big(d_{\ginf}(x,\pinf)+\sqrt{2n}\big)^2.
\end{equation*}
Moreover, we have the volume growth estimate $\mathrm{Vol}(B_r^\infty(\pinf))\leq V(n)r^n$ for geodesic balls in $(\Minf,\ginf)$ around $\pinf\in\Minf$, where $V(n)$ is a constant depending only on the dimension $n$ of the soliton.
\end{lemma}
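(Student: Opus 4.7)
The plan is to first normalize the potential, then derive the pointwise bounds on $\finf$, and finally deduce the volume estimate. Setting $\widetilde{f}:=\finf-a(\ginf,\finf)$ we have the identities $\RRR_{\ginf}+\abs{\nabla\widetilde{f}}^2_{\ginf}=\widetilde{f}$ and (by tracing the soliton equation) $\RRR_{\ginf}+\Lap\widetilde{f}=n/2$. Invoking B.-L.~Chen's result that every complete gradient shrinking Ricci soliton satisfies $\RRR_{\ginf}\geq 0$, the first identity immediately yields $\widetilde{f}\geq 0$ everywhere on $\Minf$ together with $\abs{\nabla\widetilde{f}}^2\leq\widetilde{f}$. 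Hence $\abs{\nabla\sqrt{\widetilde{f}}\,}\leq 1/2$ wherever $\widetilde{f}>0$, and in particular $\sqrt{\widetilde{f}}$ is globally $\tfrac{1}{2}$--Lipschitz on $\Minf$.

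For the existence of $\pinf$ and the upper bound, I would first establish via a rough version of the second--variation argument below that $\widetilde{f}(x)\to\infty$ as $d_{\ginf}(x,x_0)\to\infty$ for any fixed base point $x_0$; this makes $\widetilde{f}$ proper, so its infimum is attained at some $\pinf\in\Minf$. At $\pinf$ one has $\nabla\widetilde{f}=0$, so $\widetilde{f}(\pinf)=\RRR_{\ginf}(\pinf)$, while $\Lap\widetilde{f}(\pinf)\geq 0$ combined with $\Lap\widetilde{f}=n/2-\RRR_{\ginf}$ forces $\widetilde{f}(\pinf)\leq n/2$. The $\tfrac{1}{2}$--Lipschitz property of $\sqrt{\widetilde{f}}$ then delivers the upper bound
\begin{equation*}
\sqrt{\widetilde{f}(x)}\leq\sqrt{\widetilde{f}(\pinf)}+\tfrac{1}{2}d_{\ginf}(x,\pinf)\leq\tfrac{1}{2}\sqrt{2n}+\tfrac{1}{2}d_{\ginf}(x,\pinf).
\end{equation*}

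For the lower bound, fix $x\in\Minf$ with $L:=d_{\ginf}(x,\pinf)$ and let $\gamma:[0,L]\to\Minf$ be a unit--speed minimizing geodesic from $\pinf$ to $x$. Setting $\phi(s):=\widetilde{f}(\gamma(s))$, the soliton equation $\nabla^2\widetilde{f}=\ginf/2-\Rc_{\ginf}$ gives $\phi''(s)=\tfrac{1}{2}-\Rc_{\ginf}(\dot\gamma,\dot\gamma)$, while $\abs{\nabla\widetilde{f}}^2\leq\widetilde{f}$ yields $\abs{\phi'(L)}\leq\sqrt{\widetilde{f}(x)}$. Integrating $\phi''$ over $[0,L]$ and using $\phi'(0)=0$ (since $\pinf$ is a critical point) produces
\begin{equation*}
\phi'(L)=\tfrac{L}{2}-\int_0^L\Rc_{\ginf}(\dot\gamma,\dot\gamma)\,ds,
\end{equation*}
so it suffices to bound the Ricci integral by a dimensional constant. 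For this I would apply the second variation of arc length to variation fields of the form $\psi(s)E_i(s)$, where $\{E_i\}_{i=1}^{n-1}$ is a parallel orthonormal frame normal to $\dot\gamma$ and $\psi$ is a piecewise linear cutoff equal to $1$ on $[1,L-1]$ and vanishing at the endpoints; summing over $i$ and using minimality of $\gamma$ yields $\int_0^L\psi^2\Rc_{\ginf}(\dot\gamma,\dot\gamma)\,ds\leq 2(n-1)$. Combined with the curvature information available on the unit balls around the endpoints (where $\widetilde{f}\leq n/2$ at $\pinf$ allows one to bound $\Rc_{\ginf}$ via the soliton equation), this gives $\int_0^L\Rc_{\ginf}(\dot\gamma,\dot\gamma)\,ds\leq C(n)$, hence $\sqrt{\widetilde{f}(x)}\geq L/2-C(n)$; squaring produces the claimed estimate after tuning the constant to $5n$.

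Finally, for the volume bound, the plan is to combine the drift--Laplacian identity $\Lap\widetilde{f}=n/2-\RRR_{\ginf}\leq n/2$ with the gradient bound $\abs{\nabla\widetilde{f}}\leq\sqrt{\widetilde{f}}$. Integrating $\Lap\widetilde{f}$ over the sublevel set $\Omega(s):=\{\widetilde{f}\leq s\}$ and using the divergence theorem together with the co--area formula produces a differential inequality for $D(s):=\mathrm{Vol}_{\ginf}(\Omega(s))$ which integrates to $D(s)\leq C(n)\,s^{n/2}$; the inclusion $B_r^\infty(\pinf)\subseteq\Omega\big(\tfrac{1}{4}(r+\sqrt{2n})^2\big)$ from the already--established upper bound then yields $\mathrm{Vol}(B_r^\infty(\pinf))\leq V(n)\,r^n$. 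The main obstacle I expect is the second--variation step: extracting a clean dimensional constant requires converting the pointwise datum $\widetilde{f}(\pinf)\leq n/2$ into a bound on $\Rc_{\ginf}$ on a unit--size neighborhood of $\pinf$, which uses the soliton equation but requires some care with the local geometry; every other step is a fairly direct consequence of the two soliton identities together with Chen's nonnegativity of $\RRR_{\ginf}$.
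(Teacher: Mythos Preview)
The paper does not give its own proof of this lemma; it simply quotes the statement from Haslhofer--M\"uller~\cite{HM11}, crediting the earlier work of Cao--Zhou~\cite{CZ10} and Munteanu~\cite{Mun09}. So there is no in--paper argument to compare against.

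Your outline is essentially the argument from those references: the upper bound comes from the $\tfrac12$--Lipschitz property of $\sqrt{\widetilde f}$ together with $\widetilde f(\pinf)\leq n/2$; the lower bound from the second variation of arc length; and the volume growth from the co--area formula on sublevel sets of $\widetilde f$. The chicken--and--egg point you note (needing properness of $\widetilde f$ before you have $\pinf$) is handled exactly as you suggest, by first running the second--variation argument with an arbitrary base point to get a crude lower bound.

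One correction to the step you flagged as the main obstacle: you do \emph{not} need, and will not obtain, a pointwise bound on $\Rc_{\ginf}$ near the endpoints from the datum $\widetilde f(\pinf)\leq n/2$. The way the references close this gap is to substitute $\Rc_{\ginf}(\dot\gamma,\dot\gamma)=\tfrac12-\phi''$ on the intervals where $\psi\neq 1$, integrate by parts, and control the resulting $\phi'$ terms via $\abs{\phi'}\leq\sqrt{\widetilde f}$ together with the already--established upper bound for $\widetilde f$ (which gives $\abs{\phi'(s)}\leq \tfrac12 s+\sqrt{n/2}$ along the portion of $\gamma$ near $\pinf$). This replaces the Ricci integral over the end pieces by boundary terms that are bounded by dimensional constants, and is what produces the clean $5n$.
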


As a consequence of these estimates, $\int_{\Minf} e^{-\finf}dV_{\ginf}$ is well--defined and we can always normalize the potential function $\finf$ by adding a constant in such a way that
\begin{equation}\label{normalization}
\int_{\Minf} \frac{e^{-\finf}}{(4\pi)^{n/2}}\,dV_{\ginf} = 1.
\end{equation}
We then call such a potential function $\finf$ and the resulting 
soliton $(\Minf,\ginf,\finf)$ \emph{normalized}.

Lemma~\ref{growth} implies that every function $\phi$ satisfying
$\abs{\phi(x)}\leq Ce^{\alpha d_{\ginf}^2(x,\pinf)}$ for some
$\alpha<\frac{1}{4}$ and constant $C$, is integrable with respect to
$e^{-\finf}dV_{\ginf}$. In particular, since $0\leq \RRR_{\ginf}+\abs{\nabla\finf}^2_{\ginf}\leq
\finf+a(\ginf,\finf)$ and $\Lap \finf= \frac{n}{2}-\RRR_{\ginf}$, this holds for every 
polynomial in $\finf$, $\abs{\nabla\finf}^2_{\ginf}$, $\RRR_{\ginf}$ and
$\Lap\finf$. Hence, every gradient shrinking Ricci soliton 
has a well--defined $\sW$--\emph{entropy}
\begin{equation}\label{entropy}
\sW(\ginf,\finf):=\int_{\Minf} \big(\RRR_{\ginf}
+\abs{\nabla \finf}^2_{\ginf}+\finf-n\big)
\frac{e^{-\finf}}{(4\pi)^{n/2}}\,dV_{\ginf}.
\end{equation}

Let us now collect some properties of shrinking solitons and their
$\sW$--entropy that we will use in the next section.
\begin{lemma}\label{solprop}
For every gradient shrinking Ricci soliton $(\Minf,\ginf)$ with
potential function $\finf:\Minf\to\RR$, the following properties holds:
\begin{enumerate}
\item Either the scalar curvature $\RRR_{\ginf}$ is positive everywhere or $(\Minf,\ginf)$ is the standard flat $\RR^n$, that is, $(\Minf,\ginf,\finf)$ is the Gaussian soliton.

\item\label{sol2} There holds
\begin{equation*}
\sW(\ginf,\finf)
=\int_{\Minf} \big(\RRR_{\ginf}+2\Lap \finf
-\abs{\nabla\finf}^2_{\ginf}+\finf-n\big)\frac{e^{-\finf}}{(4\pi)^{n/2}}\,dV_{\ginf}.
\end{equation*}

\item The $\sW$--entropy $\sW(\ginf,\finf)$ is equal to $-a(\ginf,\finf)\int_{\Minf} \frac{e^{-\finf}}{(4\pi)^{n/2}}\,dV_{\ginf}$.

\item Any two normalized potential functions $\finf^1$ and $\finf^2$ of the same soliton $(\Minf,\ginf)$ share the same auxiliary constant, that is $a(\ginf,\finf^1)=a(\ginf,\finf^2)$. Hence, $\sW(\ginf)=\sW(\ginf,\finf)$ is independent of the choice of the normalized potential function $\finf$.

\item\label{sol4} If the Ricci tensor of a normalized soliton $(\Minf,\ginf,\finf)$ is bounded below, we have $\sW(\ginf)\leq 0$ and if $\sW(\ginf)=0$ then the manifold $(\Minf,\ginf)$ is the flat $\RR^n$ (Gaussian soliton). Moreover, under the same hypotheses, we have the following gap result: there exists a dimensional constant $\eps_n>0$ such that if $\sW(\ginf)\neq 0$, then $\sW(\ginf)<-\eps_n$.
\end{enumerate}
\end{lemma}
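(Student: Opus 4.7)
For item~1, I would take the trace of~\eqref{solitoneq} to obtain $\RRR_\ginf+\Lap\finf=n/2$, and combine it with a second differentiation plus the contracted Bianchi identity to derive the shrinker identity $\nabla\RRR_\ginf=2\Rc_\ginf(\nabla\finf,\cdot)$ and, from it, the drift--elliptic equation $\Lap\RRR_\ginf-\scal{\nabla\finf,\nabla\RRR_\ginf}=\RRR_\ginf-2\abs{\Rc_\ginf}^2_\ginf$. Together with Chen's non--negativity $\RRR_\ginf\geq 0$ for ancient solutions, the strong minimum principle then gives either $\RRR_\ginf>0$ everywhere or $\RRR_\ginf\equiv 0$; in the latter case $\Rc_\ginf\equiv 0$ and $\nabla^2\finf=\ginf/2$, which by a standard splitting argument (integrating the identity $\nabla(\abs{\nabla\finf}^2_\ginf-\finf)=0$ along the gradient flow of $\finf$) forces $(\Minf,\ginf)$ to be flat Euclidean $\RR^n$ with $\finf$ quadratic. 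Item~\ref{sol2} amounts to the single integration by parts $\int 2\Lap\finf\,e^{-\finf}dV_\ginf=2\int\abs{\nabla\finf}^2_\ginf e^{-\finf}dV_\ginf$, whose boundary term at infinity vanishes thanks to the Gaussian decay of $e^{-\finf}$ and the at--most--quadratic growth of $\abs{\nabla\finf}^2_\ginf$ supplied by Lemma~\ref{growth}. For item~3, substituting $2\Lap\finf=n-2\RRR_\ginf$ into item~\ref{sol2} and invoking the constancy of $a(\ginf,\finf)=\RRR_\ginf+\abs{\nabla\finf}^2_\ginf-\finf$, the integrand collapses to $-a(\ginf,\finf)$, which pulls out of the integral.

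\textbf{Item 4.} Let $\finf^1$ and $\finf^2$ both be normalized potentials of the same metric $\ginf$ and set $h=\finf^1-\finf^2$. Subtracting the two soliton equations gives $\nabla^2 h=0$, so $\nabla h$ is a parallel vector field. If $\nabla h\equiv 0$, then $h$ is constant, and the normalization constraint~\eqref{normalization} immediately forces $h\equiv 0$. Otherwise the de~Rham decomposition theorem splits $(\Minf,\ginf)$ isometrically as $(\RR,dt^2)\times(N,g_N)$, with each $\finf^i$ necessarily of the form $\tfrac14 t^2+c_i t+d_i+\beta(y)$ for the same function $\beta$ on $N$ (by uniqueness of the potential on the $\RR$--factor up to the affine action). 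A direct evaluation of the Gaussian integral along $\RR$ shows that the normalization fixes $c_i^2-d_i$ uniformly in $i$, while a one--line computation gives $a(\ginf,\finf^i)=\big(\RRR_{g_N}+\abs{\nabla\beta}^2_{g_N}-\beta\big)+c_i^2-d_i$, hence $a(\ginf,\finf^1)=a(\ginf,\finf^2)$. Item~3 then makes $\sW(\ginf,\finf)$ depend only on $\ginf$.

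\textbf{Item 5 and main obstacle.} The non--positivity $\sW(\ginf)\leq 0$ and the rigidity (equality forces the Gaussian $\RR^n$) follow from identifying $\sW(\ginf)$ with Perelman's $\mu$--invariant at $\tau=1$ of the self--similar shrinking Ricci flow generated by $\ginf$; the lower Ricci bound assumption is precisely what justifies the asymptotic analysis at the singular time and the exclusion of non--Gaussian zero--entropy cases. The dimensional gap $\sW(\ginf)<-\eps_n$ for non--flat normalized solitons is Yokota's theorem, proved by contradiction: a sequence of non--flat shrinkers with $\sW(\ginf_j)\to 0$ would, after a Cheeger--Gromov extraction (using the uniform lower Ricci bound and the integral estimates from Lemma~\ref{growth}), produce a non--flat shrinker with $\sW=0$, contradicting the rigidity. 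The clear main obstacle is this last item: items~1--4 reduce to algebraic manipulations of the soliton equation and to integration by parts justified by Lemma~\ref{growth}, whereas item~\ref{sol4} depends on substantially more machinery (the $\mu$--invariant, the compactness theory of shrinking solitons under bounded Ricci, and the {\L}ojasiewicz--Simon inequality), which I would cite from the literature rather than reprove.
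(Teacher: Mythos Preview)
Your proposal is correct and, for items~1--3 and~5, essentially mirrors the paper: items~2 and~3 are the same integration--by--parts/algebraic argument (the paper cites \cite{HM11} for the cut--off justification you describe), item~1 is simply cited in the paper (to \cite{Zha09,Yok09,PRS11}) rather than sketched as you do, and item~5 is likewise cited to Yokota~\cite{Yok09}. One correction on the latter: your description of the machinery behind Yokota's gap theorem is off---his proof goes through Perelman's \emph{reduced volume} and its asymptotics, not through shrinker compactness or a {\L}ojasiewicz--Simon inequality; since you plan to cite rather than reprove, this does not affect the validity of your write--up, but you should not attribute those tools to the result.

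For item~4 there is a genuine (minor) difference in route. You split off a \emph{single} $\RR$--factor in the direction of the parallel field $\nabla h$, so that $h$ depends only on $t$ and the transverse part $\beta$ is automatically shared; then a direct Gaussian computation shows the normalization pins down $c_i^2-d_i$, hence the auxiliary constant. The paper instead splits off the \emph{maximal} Euclidean factor $\RR^k$ so that the remaining factor $\widetilde{M}_\infty$ admits no parallel vector fields, forcing the two transverse potentials to differ by a constant; Fubini and the normalization then equate the additive constants. Both arguments are valid; yours is slightly more direct (no need to iterate the splitting), while the paper's has the mild advantage of making the structure of all potentials explicit. In your version, the phrase ``by uniqueness of the potential on the $\RR$--factor up to the affine action'' is a bit loose as justification for the common $\beta$---the real reason is that the $\RR$--direction was \emph{chosen} along $\nabla h$, so $h$ is a function of $t$ alone; you may want to say this explicitly.
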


\begin{proof}\ 
\begin{enumerate}
\item This is a result of Zhang~\cite[Theorem~1.3]{Zha09} and Yokota~\cite[Appendix~A.2]{Yok09} (see also~Pigola, Rimoldi and Setti~\cite{PRS11}).

\item The necessary partial integration formula 
\begin{equation*}
\int_{\Minf}\Lap\finf e^{-\finf}dV_{\ginf}=\int_{\Minf}\abs{\nabla\finf}^2_{\ginf} e^{-\finf}dV_{\ginf}
\end{equation*}
follows from the growth estimates of Lemma~\ref{growth} using a cut--off argument. See Section~2 of Haslhofer--M\"{u}ller~\cite{HM11} for full detail.

\item By the auxiliary equation $a(\ginf,\finf)=\RRR_{\ginf} +\abs{\nabla\finf}^2_{\ginf}-\finf$ and the traced soliton equation $\RRR_{\ginf}+\Lap\finf=\frac{n}{2}$, we have
\begin{equation*}
\RRR_{\ginf}+2\Lap \finf-\abs{\nabla \finf}^2_{\ginf}+\finf-n=-a(\ginf,\finf)
\end{equation*}
and thus $\sW(\ginf,\finf)=-a(\ginf,\finf)\,\int_{\Minf} \frac{e^{-\finf}}{(4\pi)^{n/2}}\,dV_{\ginf}$ follows from Point~\ref{sol2}.

\item Since the Hessian of any potential of the soliton 
is uniquely determined by the soliton equation, the difference function $h:=\finf^1-\finf^2$ is either a constant or the vector field $\nabla h$ is parallel. 
In the first case, the constant has to be zero by the normalization condition~\eqref{normalization}. In the second case, by de~Rham's splitting theorem, $(\Minf,\ginf)$ isometrically splits off a line (see for instance~\cite[Theorem~1.16]{CLN06}). Hence, we let $(\Minf,\ginf)=(\widetilde{M}_{\infty},\widetilde{g}_{\infty})\times(\RR^k,\mathrm{can})$, with $1\leq k\leq n$, such that $\widetilde{M}_\infty$ cannot split off
a line.

Denoting by $x$ the coordinates on $\widetilde{M}_\infty$ and by $y$ the coordinates on $\RR^k$, the soliton equation implies that both potentials also split as $\finf^\ell(x,y)=\widetilde{f}_{\infty}^\ell(x)+\frac{1}{4}\abs{y-y_\ell}^2_{\RR^k}$ for $\ell=1,2$, where $y_\ell\in\RR^k$. Moreover, $(\widetilde{M}_\infty,\widetilde{g}_\infty)$ is still a gradient shrinking Ricci soliton with both functions 
$\widetilde{f}_{\infty}^\ell:\widetilde{M}_\infty\to\RR$ as possible potentials, and since $\widetilde{M}_\infty$ cannot split off a line, they must differ by a constant. Thus, we have
\begin{equation*}
\finf^\ell(x,y)=\widehat{f}_{\infty}(x)+\alpha_\ell+\tfrac{1}{4}\abs{y-y_\ell}^2_{\RR^k},
\end{equation*}
for some function $\widehat{f}_{\infty}:\widetilde{M}_\infty\to\RR$ and
two constants $\alpha_1$ and $\alpha_2$.

Now, integrating the two functions $e^{-\finf^\ell}$, by means of Fubini's theorem and the normalization condition~\eqref{normalization}, we conclude that $\alpha_\ell=\alpha$ and we obtain
\begin{equation*}
a(\ginf,\finf^\ell)=\RRR_{\ginf} + \abs{\nabla\finf^\ell}^2_{\ginf} -\finf^\ell=
\RRR_{\ginf} +\abs{\nabla\widehat{f}_{\infty}}^2_{\ginf}-\widehat{f}_{\infty}-\alpha,
\end{equation*}
which is independent of $\ell=1,2$.

\item This point is a result of Yokota (Carillo--Ni~\cite{CN08} got similar results under more restrictive curvature hypotheses). Our version is equivalent to his statement~\cite[Corollary~1.1]{Yok09}.\qedhere
\end{enumerate}
\end{proof}

\begin{rem} 
If we restrict ourselves to deal with gradient shrinking Ricci solitons coming from a blow--up of a compact manifold (which is sufficient for the aims of this paper), we do not need the general result at Point~1 of this lemma, since every soliton we obtain after rescaling must have $\RRR_{\ginf}\geq 0$ everywhere, by the well known uniform bound from below on the scalar curvature, $\RRR^{\min}_{g(t)}\geq\RRR^{\min}_{g(0)}$ for every $t\in[0,T)$. Then, by a standard strong maximum principle argument (see~\cite{PRS11}, for instance), if $\RRR_{\ginf}$ is zero somewhere, the soliton must be flat, hence the Gaussian soliton.

We underline that in our situation also the nonpositivity of the $\sW$--entropy at Point~5 follows by the construction, using Theorem~\ref{noloss} and Corollary~\ref{thetaprop}, but no assumption on the Ricci tensor.

It is unknown to the authors whether the family of gradient shrinking Ricci solitons coming from a blow--up of a compact Ricci flow coincides or not with the full class of gradient shrinking Ricci solitons. For instance, by Perelman's work the former must be 
non--collapsed, we do not know if all the general shrinkers satisfy this condition.
\end{rem}

In the cases where a full classification of the shrinkers is possible,
for instance in low dimensions (two and three), it turns out that two
different gradient shrinking Ricci solitons cannot share the same
value of the $\sW$--entropy. This motivates the following definition.
\begin{defn}\label{defunique}
A normalized gradient shrinking Ricci soliton is called \emph{entropy--unique} if any other normalized shrinker with the same value of the entropy is isometric to it.
\end{defn}

\section{The Proofs of the Main Theorems}\label{secResc}

\begin{proof}[Proof of Theorem~\ref{mainthm} --- Dynamical blow--up]
Let $\sF_p$ be as in Definition~\ref{defahk} and let
$\fph(t)$ be the minimizer for $\sW$ among all functions
$\fp(t)\in\sF_p$ as in Definition~\ref{minimizer} with corresponding
$\uph=\frac{e^{-\fph}}{[4\pi(T-t)]^{n/2}}$. 

We first study the dynamical blow--up $(M,\wt{g}(s),\wt{f}(s),p)$ with
\begin{equation*}
\wt{g}(s)=\frac{g(t)}{T-t}\,\,\,\text{ and }\,\,s=-\log{(T-t)}.
\end{equation*}
We set $\wt{f}(s)=\fph(t)$. Rescaling the integral formula~\eqref{perelman4}, we get
\begin{align*}
C(M,g_0,T) &\geq
\int_0^T\tau\int_M\Abs{\Rc_{g(t)}+\nabla^2\fph(t)-\frac{g(t)}{2\tau}}^2_{g(t)}
\uph(t)\,dV_{g(t)}\,dt\\
&=\int_{-\log{T}}^{+\infty}\int_M\Abs{\Rc_{\wt{g}(s)}+\nabla^2\wt{f}(s)
-\frac{\wt{g}(s)}{2}}^2_{\wt{g}(s)}
\frac{e^{-\wt{f}(s)}}{(4\pi)^{n/2}}\,dV_{\wt{g}(s)}\,ds.
\end{align*}
Note that this implies the formula~\eqref{L2eqdyn}. Furthermore, it 
follows that for every family of disjoint intervals $(a_k,b_k)\subset\RR$ with
$\sum_{k\in\NN} (b_k-a_k)=+\infty$, we have a sequence of times
$s_j\in\cup_{k\in\NN}(a_k,b_k)$ with $s_j\nearrow\infty$ such that
\begin{equation*}
\int_M\Abs{\Rc_{\wt{g}(s_j)}+\nabla^2\wt{f}(s_j)-\frac{\wt{g}(s_j)}{2}}^2_{\wt{g}(s_j)}
\frac{e^{-\wt{f}(s_j)}}{(4\pi)^{n/2}}\,dV_{\wt{g}(s_j)}\to 0.
\end{equation*}
In the Type~I case, we have uniform bounds on the rescaled curvatures
$\abs{\Rm_{\wt{g}(s)}}_{\wt{g}(s)}$, on the functions $\wt{f}(s)$ and
their covariant derivatives, and on the injectivity radii (due to
Perelman's non--collapsing theorem in~\cite{Per02}). Thus, the rescaled pointed
manifolds $(M,\wt{g}(s_j),p)$ converge (up to a subsequence) in the
pointed Cheeger--Gromov--Hamilton sense~\cite{Ham95} to a complete
smooth limit Riemannian manifold $(\Minf,\ginf,\pinf)$ and the
functions $\wt{f}(s_j):M\to\RR$ converge locally smoothly to some
smooth function $\finf:\Minf\to\RR$. It remains to show
that the limit $(\Minf,\ginf,\finf)$ is a normalized gradient shrinking Ricci soliton.

By Proposition~\ref{effectiveprop}, there exists a positive constant
$\wh{C}$, independent of $i\in\NN$, such that 
\begin{equation}\label{endofproof1}
e^{-\wt{f}(q,s_i)}\geq \wh{C}e^{-d^2_{\wt{g}(s_j)}(p,q)/\wh{C}}.
\end{equation}
Hence, in every geodesic ball $\widetilde{B}^j_\varrho$ in $(M,\wt{g}(s_j))$ of radius
$\varrho>0$ around the basepoint $p\in M$, we have 
\begin{equation*}
\int_{\widetilde{B}_\varrho^j}\Abs{\Rc_{\wt{g}(s_j)}+\nabla^2 \wt{f}(s_j)-\frac{\wt{g}(s_j)}{2}}^2_{\wt{g}(s_j)}
e^{-d^2_{\wt{g}(s_j)}(p,q)/\wh{C}}\,dV_{\wt{g}(s_j)}\to 0,
\end{equation*}
thus
\begin{equation*}
\int_{\widetilde{B}_\varrho^j}\Abs{\Rc_{\ginf}+\nabla^2\finf-\frac{\ginf}{2}}^2_{\ginf}
e^{-d^2_{\ginf}(p,q)/\wh{C}}\,dV_{\ginf}=0
\end{equation*}
and, since $\varrho$ was arbitrary,
\begin{equation}\label{endofproof2}
\Rc_\infty+\nabla^2 \finf=\frac{\ginf}{2}
\end{equation}
everywhere on $\Minf$.

To finish the proof of Point~1 of Theorem~\ref{mainthm}, we only need to show that the limit gradient shrinking Ricci soliton $(\Minf,\ginf,\finf)$ is normalized. This will be done in Corollary~\ref{convnorm} below.
\end{proof}

\begin{proof}[Proof of Theorem~\ref{mainthm} --- Sequential blow--up]
The Type~I assumption~\eqref{TypeIupper} translates to the uniform curvature bound
\begin{equation*}
\sup_M\abs{\Rm(\cdot,s)}_{g_j(s)}=\sup_M\frac{1}{\lambda_j}
\abs{\Rm(\cdot,T+\tfrac{s}{\lambda_j})}_{g(T+\frac{s}{\lambda_j})}\leq
\frac{C}{\lambda_j\big(T-(T+\frac{s}{\lambda_j})\big)}=\frac{C}{-s}
\end{equation*}
for the blow--up sequence $(M,g_j(s),p)$ defined in formula~\eqref{sequential}. This yields uniform curvature bounds on compact time intervals $[S_0,S_1]\subset(-\infty,0)$ using Bando--Shi estimates~\cite{Shi89}. Together with Perelman's no local collapsing theorem, we can again use the Cheeger--Gromov--Hamilton compactness theorem~\cite{Ham95} to extract a complete pointed subsequential limit Ricci flow $(\Minf,\ginf(s),\pinf)$ on $(-\infty,0)$ which is still Type~I. This means in particular, that there exist an exhaustion of $\Minf$ by open sets $U_j$ containing $\pinf$ and smooth embeddings $\phi_j: U_j\to M$ with $\phi_j(\pinf)=p$ such that the pulled back metrics $\phi_j^*g_j$ converge to $\ginf$ smoothly on compact subsets of $\Minf\times(-\infty,0)$. Using the uniform bounds for $f_j(s)=\fph(t)$ (where $s=\lambda_j(t-T)$) from Section~\ref{secfpT}, we see that (by possibly taking a further subsequence) also the pull--backs $\phi_j^*f_j$ converge smoothly to a limit $\finf:\Minf\times(-\infty,0)\to\RR$.

For $[S_0,S_1]\subset(-\infty,0)$ and $g_j$, $f_j$ as above, we compute, using $\tau=T-t$ and the monotonicity formula~\eqref{monotonicity},
\begin{align*}
0&=\lim_{j\to\infty}\tfpt(g(T+\frac{S_1}{\lambda_j})) -
\lim_{j\to\infty}\tfpt(g(T+\frac{S_0}{\lambda_j}))\\
&=\lim_{j\to\infty}\int_{T+\frac{S_0}{\lambda_j}}^{T+\frac{S_1}{\lambda_j}}
2\tau(4\pi\tau)^{-n/2}\int_M\Abs{\Rc_{g(t)}+\nabla^2\fph(t)-\frac{g(t)}{2\tau}}^2_{g(t)}e^{-\fph(t)}dV_{g(t)}\,dt\\
&=\lim_{j\to\infty}\int_{S_0}^{S_1} -2s(-4\pi
s)^{-n/2}\int_M\Abs{\Rc_{g_j(t)}+\nabla^2
f_j(s)-\frac{g_j(s)}{-2s}}^2_{g_j(s)}e^{-f_j(s)}dV_{g_j(s)}\,ds.
\end{align*}
Since $-2s(-4\pi s)^{-n/2}$ is positive and bounded on $[S_0,S_1]$,
this implies~\eqref{L2eqseq}. The soliton property of the limit then
follows from~\eqref{L2eqseq} analogous
to~\eqref{endofproof1}--\eqref{endofproof2}. As above, the proof of
Point~2 of Theorem~\ref{mainthm} is finished by arguing that the limit soliton is normalized, which follows from Corollary~\ref{convnorm} below.
\end{proof}

\begin{rem}
All the formulas hold true if we replace the minimizer $\fph(t)$ with
some fixed choice of $\fp(t)\in\sF_p$.
\end{rem}

Next, we relate the limit $\sW$--density $\Theta(p)$ with the value of the $\sW$--entropy
$\sW(\ginf,\finf)$ of the limit gradient shrinking soliton $(\Minf,\ginf,\finf)$ obtained by the above rescaling procedures. The proof of Theorem~\ref{noloss} presented here uses only the weak bounds for $\fp$ from Corollary~\ref{rinf} (rather than the full Gaussian upper bounds from Proposition~\ref{prop.Gaussian}) and it also does not rely on the fact that the limit shrinkers are normalized (since we want to prove this as a corollary below).

\begin{proof}[Proof of Theorem~\ref{noloss}]
Using integration by parts, for every $j\in\NN$, we have 
\begin{align*}
\int_{M}\big(\RRR_{\wt{g}(s_j)}&+\abs{\nabla\wt{f}(s_j)}^2_{\wt{g}(s_j)}
+\wt{f}(s_j)-n\big)\,\frac{e^{-\wt{f}(s_j)}}{(4\pi)^{n/2}}\,dV_{\wt{g}(s_j)}\\
&=\int_{M}\big(\RRR_{\wt{g}(s_j)}+2\Lap\wt{f}(s_j)-\abs{\nabla\wt{f}(s_j)}^2_{\wt{g}(s_j)} 
+\wt{f}(s_j)-n\big)\frac{e^{-\wt{f}(s_j)}}{(4\pi)^{n/2}}\,dV_{\wt{g}(s_j)},
\end{align*}
the latter integral having the property that its integrand is pointwise nonpositive, 
as shown in Proposition~\ref{nilemma2}. This clearly implies that it is upper
semicontinuous on the sequence of Riemannian manifolds and functions converging locally smoothly, and combining this with Point~\ref{sol2} of Lemma~\ref{solprop} we find
\begin{align*}
\lim_{j\to\infty}\int_{M}\big(&\RRR_{\wt{g}(s_j)}
+\abs{\nabla\wt{f}(s_j)}^2_{\wt{g}(s_j)}
+\wt{f}(s_j)-n\big)\,\frac{e^{-\wt{f}(s_j)}}{(4\pi)^{n/2}}\,dV_{\wt{g}(s_j)}\\
&\leq\int_{\Minf} \big(\RRR_{\ginf}+2\Lap \finf
-\abs{\nabla \finf}^2_{\ginf} +\finf-n\big)\frac{e^{-\finf}}
{(4\pi)^{n/2}}\,dV_{\ginf}=\sW(\ginf,\finf).
\end{align*}

In order to prove the opposite inequality (i.e. to show that we do not lose $\sW$--entropy in the limit), consider the value $\bar{r}>0$ given by Corollary~\ref{rinf} and denote by $\widetilde{B}_{\bar{r}}^j$ the geodesic ball of radius $\bar{r}$ in
$(M,\widetilde{g}(s_j))$ around $p_j\in M$. Then, split the integral
\begin{align*}
\int_{M}\big(\RRR_{\wt{g}(s_j)}
+\abs{&\nabla\wt{f}(s_j)}^2_{\wt{g}(s_j)}
+\wt{f}(s_j)-n\big)\,\frac{e^{-\wt{f}(s_j)}}{(4\pi)^{n/2}}\,dV_{\wt{g}(s_j)}\\
&=\int_{M\setminus\widetilde{B}_{\bar{r}}^j}
\big(\RRR_{\wt{g}(s_j)} +\abs{\nabla\wt{f}(s_j)}^2_{\wt{g}(s_j)}
+\wt{f}(s_j)-n\big)\,\frac{e^{-\wt{f}(s_j)}}{(4\pi)^{n/2}}\,dV_{\wt{g}(s_j)}\\
&\quad+\int_{\widetilde{B}_{\bar{r}}^j}\big(\RRR_{\wt{g}(s_j)} +\abs{\nabla\wt{f}(s_j)}^2_{\wt{g}(s_j)}
+\wt{f}(s_j)-n\big)\,\frac{e^{-\wt{f}(s_j)}}{(4\pi)^{n/2}}\,dV_{\wt{g}(s_j)}
\end{align*}
and notice that the last integral converges, by the hypotheses, to
\begin{equation*}
\int_{{B}_{\bar{r}}^\infty(p_\infty)}\big(\RRR_{\wt{g}_\infty}
+\abs{\nabla\wt{f}_\infty}^2_{\wt{g}_\infty}
+\wt{f}_\infty-n\big)\,\frac{e^{-\wt{f}_\infty}}{(4\pi)^{n/2}}\,dV_{\wt{g}_\infty}.
\end{equation*}
We claim that the first integral in the sum has a nonnegative integrand when
$j\in\NN$ is large enough. Indeed, the minimum of the scalar curvature
along the Ricci flow is non--decreasing hence it is bounded below
uniformly by $\min_M \RRR_{g(0)}$ (we recall that $M$ is compact), this implies that
\begin{equation*}
\liminf_{j\in\infty}\min_M \RRR_{\wt{g}(s_j)}= 
\liminf_{j\in\infty}\min_M \RRR_{g(t_j)}(T-t_j)\geq 
\liminf_{j\in\infty}\min_M \RRR_{g(0)}(T-t_j)=0,
\end{equation*}
in particular, when $j$ is large, $\RRR_{\wt{g}(s_j)}\geq -n$. Now, by
Corollary~\ref{rinf} we have that $\widetilde{f}(s_j)(q)=\widehat{f}_{p_j,T}(q,t(s_j))\geq 3n$ if $d^2_{g(t(s_j))}(p_j,q)\geq(T-t(s_j))\bar{r}^2$, that is, when $d^2_{\wt{g}(s_j)}(p_j,q)\geq\bar{r}^2$. This
last condition is clearly satisfied if $q\in M\setminus \widetilde{B}_{\bar{r}}^j$, hence in such a case
\begin{equation*}
\RRR_{\wt{g}(s_j)}(q) +\abs{\nabla\wt{f}(s_j)(q)}^2_{\wt{g}(s_j)}
+\wt{f}(s_j)(q)-n\geq -n+\abs{\nabla\wt{f}(s_j)(q)}^2_{\wt{g}(s_j)}+3n-n\geq n>0,
\end{equation*}
as we claimed.

Then, on the sequence of Riemannian manifolds and functions converging locally
smoothly, this integral is lower semicontinuous, that is,
\begin{align*}
\lim_{j\to\infty}\int_{M\setminus\widetilde{B}_{\bar{r}}^j}
\big(\RRR_{\wt{g}(s_j)} +&\,\abs{\nabla\wt{f}(s_j)}^2_{\wt{g}(s_j)}
+\wt{f}(s_j)-n\big)\,\frac{e^{-\wt{f}(s_j)}}{(4\pi)^{n/2}}\,dV_{\wt{g}(s_j)}\\
\geq&\,\int_{M_\infty\setminus{B}_{\bar{r}}^\infty(p_\infty)}\big(\RRR_{\wt{g}_\infty}
+\abs{\nabla\wt{f}_\infty}^2_{\wt{g}_\infty} 
+\wt{f}_\infty-n\big)\,\frac{e^{-\wt{f}_\infty}}{(4\pi)^{n/2}}\,dV_{\wt{g}_\infty}.
\end{align*}
Thus, putting together the limits of the two integrals and recalling the
definition of $\sW$--entropy $\sW(\ginf,\finf)$ in formula~\eqref{entropy}, 
we conclude
\begin{equation*}
\lim_{j\to\infty}\int_{M}\big(\RRR_{\wt{g}(s_j)}
+\abs{\nabla\wt{f}(s_j)}^2_{\wt{g}(s_j)}
+\wt{f}(s_j)-n\big)\,\frac{e^{-\wt{f}(s_j)}}{(4\pi)^{n/2}}\,dV_{\wt{g}(s_j)}\geq\sW(\ginf,\finf).\qedhere
\end{equation*}
\end{proof}

We can now give a proof of the fact that the limit solitons obtained above are normalized, thus finishing the proof of Theorem~\ref{mainthm}.

\begin{cor}\label{convnorm}
Given a sequence of pointed rescaled manifolds $(M,\wt{g}(s_j),p_j)$
and functions $\wt{f}(s_j)=\widehat{f}_{p_j,T}(t(s_j))$
converging locally smoothly to some gradient shrinking Ricci soliton
$(\Minf,\ginf,\pinf)$ and potential function $\finf:\Minf\to\RR$, we have
\begin{equation*}
\int_{\Minf}\frac{e^{-\finf}}{(4\pi)^{n/2}}\,dV_{\ginf}=1,
\end{equation*}
(that is, the soliton is normalized) if we are in one of the following two situations:
\begin{itemize}
\item either all the points $p_j$ coincide with some $p\in M$, like in Theorem~\ref{mainthm},
\item or if $p_j\to p$ and $\Theta(p)=0$.
\end{itemize}
\end{cor}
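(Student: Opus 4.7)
The plan is to promote the pre-limit normalization
\begin{equation*}
\int_M \frac{e^{-\wt{f}(s_j)}}{(4\pi)^{n/2}}\,dV_{\wt g(s_j)} = 1,
\end{equation*}
which holds for every $j\in\NN$ as the rescaled form of the mass-one identity $\int_M u_{p_j,T}(\cdot,t(s_j))\,dV_{g(t(s_j))}=1$ for adjoint heat kernels (using $dV_{\wt g(s)}=(T-t)^{-n/2}dV_{g(t)}$), to the limit $j\to\infty$. The growth estimates of Lemma~\ref{growth} already guarantee that $\int_{\Minf}\frac{e^{-\finf}}{(4\pi)^{n/2}}\,dV_{\ginf}$ is a well-defined finite number, so the content of the corollary is to rule out loss of mass at infinity in the rescaled picture.

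One direction is a standard Fatou argument. Pulling back the nonnegative integrands via the Cheeger--Gromov diffeomorphisms $\phi_j : U_j \to M$ with $\phi_j(\pinf) = p_j$ supplied by the local smooth convergence $\phi_j^*\wt g(s_j)\to\ginf$, $\phi_j^*\wt f(s_j)\to\finf$, Fatou's lemma immediately yields $\int_{\Minf}\frac{e^{-\finf}}{(4\pi)^{n/2}}\,dV_{\ginf}\leq 1$. The reverse inequality is the substantial step, and requires a tail estimate uniform in $j$. For this I would use the full \emph{Gaussian upper bound} of Proposition~\ref{prop.Gaussian}, which after dynamical rescaling reads
\begin{equation*}
\frac{e^{-\wt f(q,s_j)}}{(4\pi)^{n/2}}\;\leq\; \frac{\bar C}{(4\pi)^{n/2}}\, e^{-d^2_{\wt g(s_j)}(p_j,q)/\bar C},
\end{equation*}
with $\bar C$ depending only on $n$, $C_{\II}$ and $(M,g(0))$---in particular independent of both $j$ and of the basepoint $p_j$. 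This uniformity in the basepoint is precisely what allows Case~1 ($p_j\equiv p$) and Case~2 ($p_j\to p$, $\Theta(p)=0$) to be treated by the same argument. The Type~I condition \eqref{TypeIupper} translates into the uniform rescaled bound $\abs{\Rm_{\wt g(s)}}_{\wt g(s)}\leq C_{\II}$, hence a uniform lower Ricci bound; Bishop--Gromov then produces a uniform volume estimate $\mathrm{vol}_{\wt g(s_j)}(\widetilde B^j_r)\leq V(r)$ for the geodesic balls $\widetilde B^j_r$ of radius $r$ about $p_j$ in $(M,\wt g(s_j))$. An annular decomposition of $M\setminus \widetilde B^j_R$ combined with the two bounds yields
\begin{equation*}
\int_{M\setminus \widetilde B^j_R}\frac{e^{-\wt f(s_j)}}{(4\pi)^{n/2}}\,dV_{\wt g(s_j)}\;\leq\;\eta(R),\qquad \eta(R)\to 0\text{ as }R\to\infty,
\end{equation*}
uniformly in $j$.

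To conclude, I write $1 = \int_{\widetilde B^j_R}+\int_{M\setminus \widetilde B^j_R}$. By smooth convergence of the metric on compact subsets of $\Minf$, the sets $\widetilde B^j_R$ and $\phi_j(B^\infty_R(\pinf))$ differ by a region whose $\frac{e^{-\wt f(s_j)}}{(4\pi)^{n/2}}\,dV_{\wt g(s_j)}$-mass is $o_j(1)$, so sending $j\to\infty$ for fixed $R$ gives $1 \leq \int_{B^\infty_R(\pinf)}\frac{e^{-\finf}}{(4\pi)^{n/2}}\,dV_{\ginf} + \eta(R)$, and then $R\to\infty$ closes the argument. The main obstacle is the uniform tail bound: the weak growth estimate of Corollary~\ref{rinf} which sufficed for Theorem~\ref{noloss} is not enough here, because $\mathrm{vol}(M,\wt g(s_j))\to+\infty$ as $s_j\to\infty$ and the uniform upper bound $e^{-\wt f(s_j)}\leq e^{-3n}$ outside a fixed-radius ball cannot be integrated to something small; one genuinely needs the Gaussian decay from Proposition~\ref{prop.Gaussian}.
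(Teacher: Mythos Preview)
Your argument is correct, but it takes a genuinely different route from the paper. You prove the tail estimate \emph{directly} via the full Gaussian upper bound of Proposition~\ref{prop.Gaussian} together with Bishop--Gromov volume control, which indeed gives a uniform $\eta(R)\to 0$ independent of $j$ and of the basepoint $p_j$; this is clean and, as you observe, handles both cases at once without any distinction. The paper instead argues \emph{indirectly}: assuming mass $\alpha>0$ is lost, it uses only the weak bound of Corollary~\ref{rinf} (namely $\wt f\geq 3n$ outside $\widetilde B^j_{\bar r}$, hence the $\sW$--integrand is $\geq n$ there) to deduce that the $\sW$--entropy of the limit satisfies $\sW(\ginf,\finf)\leq \lim_j\theta(p_j,t(s_j))-\tfrac{n\alpha}{2}$, which contradicts Theorem~\ref{noloss}. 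So your claim that ``one genuinely needs the Gaussian decay'' is not quite right: the paper's trick is precisely to trade the Gaussian bound for the no--loss--of--entropy theorem, in keeping with its stated philosophy (Remark~\ref{rem1}) of relying only on Corollary~\ref{rinf} so as to leave the door open for Type~II singularities. The trade--off is that your approach is more elementary and self--contained (and in fact works for \emph{arbitrary} sequences $p_j$, not just the two listed cases), while the paper's approach needs the weaker input but leans on the substantial Theorem~\ref{noloss}; the two--case hypothesis in the statement is there to pin down $\lim_j\theta(p_j,t(s_j))$ as $\Theta(p)$ via the continuity Lemma~\ref{thcon}.
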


\begin{proof}
Since $\int_{M}\frac{e^{-\wt{f}(s_j)}}{(4\pi)^{n/2}}\,dV_{\wt{g}(s_j)}=1$
for every $j\in\NN$ and the functional is lower semicontinuous, as the
integrand is positive, it is sufficient to show
\begin{equation*}
\int_{\Minf}\frac{e^{-\finf}}{(4\pi)^{n/2}}\,dV_{\ginf}\geq 1.
\end{equation*}
Assume by contradiction that $\int_{\Minf}\frac{e^{-\finf}}
{(4\pi)^{n/2}}\,dV_{\ginf}=1-\alpha$ for some $\alpha>0$. This means
that for every $\varrho>0$ there exists $j_\varrho\in N$ such that for every
$j>j_\varrho$ we have
\begin{equation*}
\int_{M\setminus \widetilde{B}^j_\varrho(p_j)}\frac{e^{-\wt{f}(s_j)}}{(4\pi)^{n/2}}\,dV_{\wt{g}(s_j)}>\alpha/2.
\end{equation*}
If we take $\varrho$ larger than the value $\bar{r}$ given by Corollary~\ref{rinf} we have
\begin{equation*}
\int_{M\setminus \widetilde{B}^j_\varrho(p_j)}\big(\RRR_{\wt{g}(s_j)}+\abs{\nabla\wt{f}(s_j)}^2_{\wt{g}(s_j)} +\wt{f}(s_j)-n\big)\,\frac{e^{-\wt{f}(s_j)}}{(4\pi)^{n/2}}\,dV_{\wt{g}(s_j)}\geq \frac{n\alpha}{2}.
\end{equation*}
Hence,
\begin{equation*}
\int_{\widetilde{B}^j_\varrho(p_j)}\big(\RRR_{\wt{g}(s_j)}+\abs{\nabla\wt{f}(s_j)}^2_{\wt{g}(s_j)} +\wt{f}(s_j)-n\big)\,\frac{e^{-\wt{f}(s_j)}}{(4\pi)^{n/2}}\,dV_{\wt{g}(s_j)}\leq \theta(p_j,t(s_j))-\frac{n\alpha}{2},
\end{equation*}
and passing to the limit as $j\to\infty$ we get
\begin{equation*}
\int_{B^\infty_\varrho(\pinf)}\big(\RRR_{\ginf}+\abs{\nabla\finf}^2_{\ginf}
+\finf-n\big)\, \frac{e^{-\finf}}{(4\pi)^{n/2}}\,dV_{\ginf}\leq \lim_{j\to\infty}\theta(p_j,t(s_j))-\frac{n\alpha}{2}.
\end{equation*}
If all the points $p_j$ coincide with $p$, then $\lim_{j\to\infty}\theta(p_j,t(s_j))=\Theta(p)$, while in the second case, 
if $\Theta(p)=0$ we have the same conclusion by the fact that $p$ is a continuity point 
for $\Theta:M\to\RR$ by Lemma~\ref{thcon}. Hence,
\begin{equation*}
\int_{B^\infty_\varrho(\pinf)}\big(\RRR_{\ginf}+\abs{\nabla\finf}^2_{\ginf}
+\finf-n\big)\,\frac{e^{-\finf}}{(4\pi)^{n/2}}\,dV_{\ginf}\leq \Theta(p)-\frac{n\alpha}{2}
\end{equation*}
and letting $\varrho\to+\infty$ we get
\begin{equation*}
\int_{\Minf}\big(\RRR_{\ginf}+\abs{\nabla\finf}^2_{\ginf}
+\finf-n\big)\,\frac{e^{-\finf}}{(4\pi)^{n/2}}\,dV_{\ginf}=\sW(\ginf,\finf)\leq \Theta(p)-\frac{n\alpha}{2}
\end{equation*}
which is in contradiction with the conclusion of Theorem~\ref{noloss}.
\end{proof}

\begin{cor}\label{thetaprop}
We have $\sW(\ginf)=\Theta(p)$ for every normalized gradient shrinking
Ricci soliton obtained as a blow--up limit of a locally converging sequence 
of rescaled manifolds $(M,\wt{g}(s_j),p)\to (\Minf,\ginf,\pinf)$ and functions
$\wt{f}(s_j)=\widehat{f}_{p,T}(t(s_j))\to\finf$.
\end{cor}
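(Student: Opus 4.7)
The plan is to chain together three identities. First, exploit the standard scale-invariance of the $\sW$-functional to relate the entropy of the rescaled data $(\wt{g}(s),\wt{f}(s))$ to the $\sW$-density function $\tp$ along the original flow. A direct computation using $\RRR_{\wt{g}}=(T-t)\RRR_g$, $\abs{\nabla\wt{f}}^2_{\wt{g}}=(T-t)\abs{\nabla f}^2_g$, and $dV_{\wt{g}}=(T-t)^{-n/2}dV_g$ shows that taking $\tilde{\tau}=1$ yields
\begin{equation*}
\sW(\wt{g}(s),\wt{f}(s),1)=\sW(g(t),\fph(t),T-t)=\tp(t),
\end{equation*}
where the second equality is just the definition of $\tp$ together with the fact that $\wt{f}(s)=\fph(t)$ is the minimizer realizing the infimum.

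Second, since $s_j\to\infty$ corresponds to $t_j=T-e^{-s_j}\to T$ (and analogously in the sequential case), the monotone convergence $\tp(t)\to\Theta(p)$ established after Proposition~\ref{monoprop} gives
\begin{equation*}
\lim_{j\to\infty}\sW(\wt{g}(s_j),\wt{f}(s_j),1)=\lim_{j\to\infty}\tp(t_j)=\Theta(p).
\end{equation*}
On the other hand, Theorem~\ref{noloss} identifies this same limit with $\sW(\ginf,\finf)$. Since by hypothesis the limit shrinker is normalized, Point~4 of Lemma~\ref{solprop} allows us to replace $\sW(\ginf,\finf)$ by the intrinsic quantity $\sW(\ginf)$, independent of the chosen normalized potential. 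Concatenating the three equalities yields $\sW(\ginf)=\Theta(p)$.

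The essential work has already been done in Theorem~\ref{noloss}, where the combination of Proposition~\ref{nilemma2} (which supplies the pointwise sign of the integrand after integration by parts and thereby controls upper-semicontinuity) and the mild exterior lower bound for $\fp$ from Corollary~\ref{rinf} (which controls lower-semicontinuity on the complement of a large ball) together ensure that no entropy is lost. Given that analytic input, the present corollary poses no further obstacle: it is a bookkeeping identification of three natural ways to write the same limiting value, using scale-invariance to bridge the rescaled and unrescaled pictures and Lemma~\ref{solprop} to bridge the soliton-with-potential and the soliton-only pictures.
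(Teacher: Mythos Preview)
Your proof is correct and follows essentially the same route as the paper: identify the rescaled entropy integral $\sW(\wt{g}(s_j),\wt{f}(s_j),1)$ with $\tp(t(s_j))$ by scale-invariance, then pass to the limit using Theorem~\ref{noloss} and the monotone convergence $\tp\to\Theta(p)$. Your version is simply more explicit about the scale-invariance computation and about invoking Point~4 of Lemma~\ref{solprop} to pass from $\sW(\ginf,\finf)$ to $\sW(\ginf)$, details the paper leaves implicit.
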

\begin{proof}
For every rescaled manifold $(M,\wt{g}(s_j))$ we have, 
\begin{equation*}
\int_{M}\big(\RRR_{\wt{g}(s_j)} +\abs{\nabla\wt{f}(s_j)}^2_{\wt{g}(s_j)}
+\wt{f}(s_j)-n\big)\,\frac{e^{-\wt{f}(s_j)}}{(4\pi)^{n/2}}\,dV_{\wt{g}(s_j)}=\tp(t(s_j)).
\end{equation*}
Hence, the claim follows by the Theorem~\ref{noloss}, passing to the limit as $j\to\infty$.
\end{proof}

A consequence of this discussion is that all the gradient shrinking
Ricci solitons obtained by rescaling around the point $p\in M$ must have
the common value $\Theta(p)$ of their $\sW$--entropy $\sW(\ginf)$. In particular, if any blow--up sequence at $p$ yields an \emph{entropy--unique} limit shrinking Ricci soliton (in the sense of Definition~\ref{defunique}), then this blow--up is unique in the sense that any other blow--up at $p$ converges to the same limit. Thus we have a uniqueness result of the asymptotic ``shape'' of the singularity at $p\in M$ in this case. More generally, the uniqueness of \emph{compact} blow--up limits was obtained in \cite{SW10} (see also \cite{Ache12} for a slightly different version, using the language of $\tau$--flows).

Note that the values of the $\sW$--entropy of several gradient shrinking Ricci solitons have been computed in~\cite{CHI}.

Since all the arguments also work for every fixed function $\fp(\cdot,t)$
instead of the family of minimizers $\fph(t)$, actually 
$\lim_{t\to T}\tfpt(t)=\Theta(p)$ and all the
previous analysis can be similarly repeated.

Let us finish the paper with a discussion of the case where $p\in M$ is actually 
a singular point, that is, the point $p$ has no neighborhood on which $\abs{\Rm(\cdot,t)}_{g(t)}$ stays bounded as $t\to T$. There are several more
restrictive notions of singular points that one can consider (cf.~\cite{EMT11}).
\begin{defn} 
At a Type~I singularity of the flow, we say that $p\in M$ is a {\em
  Type~I singular point} if 
there exists a sequence of times $t_i\to T$ and points $p_i\to p$ such
that
\begin{equation*}
\abs{\Rm_{g(t_i)}(p_i)}_{g(t_i)}\geq \frac{\delta}{T-t_i}
\end{equation*}
for some constant $\delta>0$. We say that a Type~I singular point $p\in M$
is {\em special} if the points $p_i$ in the above sequence can be
chosen to be all equal to the point $p$, that is,
\begin{equation*}
\abs{\Rm_{g(t_i)}(p)}_{g(t_i)}\geq \frac{\delta}{T-t_i}
\end{equation*}
for some sequence of times $t_i\to T$ and some positive constant $\delta$.
We say that a special Type~I singular point $p\in M$ is
$\RRR$--special if there exists $\delta>0$ such that
\begin{equation*}
\abs{\RRR_{g(t_i)}(p)}_{g(t_i)}\geq \frac{\delta}{T-t_i}
\end{equation*}
for some sequence of times $t_i\to T$.
\end{defn}

By Corollary~\ref{thetaprop}, when $\Theta(p)=0$, any limit normalized gradient
shrinking Ricci soliton obtained by rescaling around $p\in M$ must satisfy
$\sW(\ginf)=0$. Hence, if $\Theta(p)=0$, by Point~5 of Lemma~\ref{solprop} the
manifold $(\Minf,\ginf)$ must be the flat $\RR^n$. Hence, any limit
gradient shrinking Ricci soliton is nontrivial if and only if
$\Theta(p)<0$. Nontriviality is easily seen to be equivalent to
non--flatness.

Notice also that this discussion implies that all the functions
$\tp(t)$ are actually \emph{negative} for every $t\in[0,T)$. Indeed,
if $\tp(t_0)=0$ then $\Theta(p)=0$ and $\tp$ is constant in the
interval $[t_0,T)$ which implies, by formula~\eqref{monotonicity},
that the original (unscaled) flow is homothetically shrinking, hence
the manifold $(M,g(t))$ is only a dilation of the limit gradient
shrinking Ricci soliton which is the flat $\RR^n$, as
$\Theta(p)=0$. This is clearly in contradiction with the fact that $M$
is compact.

The following result is related to the regularity theorem of Hein and Naber~\cite{HN12} (using a local version of the entropy functional) as well as the one of Enders--M\"{u}ller--Topping~\cite{EMT11} (using the reduced volume functional).

\begin{thm}[Points with limit density $\Theta(p)=0$ are regular points]\label{ppp1}
If $\Theta(p)=0$ then $p\in M$ cannot be a Type~I singular point of
the flow.
\end{thm}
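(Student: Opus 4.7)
The plan is to argue by contradiction. Suppose $\Theta(p)=0$ and that $p$ is nevertheless a Type~I singular point, so that there exist sequences $t_j\to T$ and $p_j\to p$ with $\abs{\Rm(p_j,t_j)}_{g(t_j)}\geq\delta/(T-t_j)$ for some $\delta>0$. The idea is to produce a blow--up limit \emph{centered at $p_j$} which, on the one hand, must be the flat Gaussian soliton by the $\sW$--entropy analysis preceding the theorem, but on the other hand carries strictly positive curvature at the basepoint.

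Set $\lambda_j:=1/(T-t_j)$ and consider the sequential blow--up $g_j(s):=\lambda_j g(T+s/\lambda_j)$ together with the test functions $f_j(s):=\widehat{f}_{p_j,T}(t(s))$, based at $p_j$. The rescaling yields $\abs{\Rm_{g_j(-1)}(p_j)}_{g_j(-1)}\geq\delta$. The Type~I hypothesis combined with Bando--Shi estimates gives uniform curvature control on compact subsets of $(-\infty,0)$; Perelman's no--local--collapsing gives injectivity radius lower bounds; and the estimates of Section~\ref{secfpT} (in particular Proposition~\ref{effectiveprop} and Corollary~\ref{rinf}, whose constants are all independent of the basepoint) give uniform $C^\infty_{\mathrm{loc}}$ control of the $f_j$. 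Hamilton's compactness theorem then furnishes a subsequential pointed Cheeger--Gromov limit $(\Minf,\ginf(s),\finf(s),\pinf)$, smooth on $(-\infty,0)$.

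Following the proof of Theorem~\ref{mainthm} Point~2 verbatim with basepoints $p_j$ in place of a fixed $p$, the monotonicity formula~\eqref{mono1} together with the uniform bound $-\theta_{p_j}(0)/2\leq C(M,g_0,T)$ from~\eqref{perelman4} (uniform in $j$ by the compactness in Corollary~\ref{fboundscor}) forces the rescaled $L^2$ integrand to vanish in the limit, so $(\Minf,\ginf,\finf)$ satisfies $\Rc_{\ginf}+\nabla^2\finf=\ginf/2$. Since $p_j\to p$ and $\Theta(p)=0$, the second case of Corollary~\ref{convnorm} gives the normalization $\int_{\Minf}(4\pi)^{-n/2}e^{-\finf}dV_{\ginf}=1$. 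The proof of Theorem~\ref{noloss} carries over unchanged to variable basepoints --- the only pointwise estimate for $\widehat{f}_{p,T}$ used there is Corollary~\ref{rinf}, whose constant $\bar r$ is $p$--independent --- so, as in Corollary~\ref{thetaprop}, one obtains
\begin{equation*}
\sW(\ginf,\finf)=\lim_{j\to\infty}\theta_{p_j}(t_j).
\end{equation*}

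It remains to identify this limit with $\Theta(p)$. Because $\Theta$ is lower semicontinuous, nonpositive, and $\Theta(p)=0$, Lemma~\ref{thcon} makes $\Theta$ continuous at $p$, hence $\Theta(p_j)\to 0$; together with the monotonicity bound $\theta_{p_j}(t_j)\leq\Theta(p_j)$ this gives $\limsup_j\theta_{p_j}(t_j)\leq 0$. For the opposite inequality, fix any $t_0\in[0,T)$: for $j$ large we have $t_j>t_0$ and hence $\theta_{p_j}(t_j)\geq\theta_{p_j}(t_0)$ by monotonicity, while the continuity in $p$ of $\theta(\cdot,t_0)$ from Lemma~\ref{thcon} gives $\theta_{p_j}(t_0)\to\theta_p(t_0)$; letting $t_0\to T$ then yields $\liminf_j\theta_{p_j}(t_j)\geq\Theta(p)=0$. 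Thus $\sW(\ginf)=0$, and Point~5 of Lemma~\ref{solprop} (whose Ricci lower bound hypothesis is met since $\Rc_{\ginf}$ is bounded by the smooth convergence in the Type~I setting) forces $(\Minf,\ginf)$ to be the flat $\RR^n$. This contradicts the inequality $\abs{\Rm_{\ginf}(\pinf)}_{\ginf}=\lim_j\abs{\Rm_{g_j(-1)}(p_j)}_{g_j(-1)}\geq\delta>0$ coming from smooth Cheeger--Gromov convergence. The main technical obstacle is precisely the joint limit $\theta_{p_j}(t_j)\to\Theta(p)$ with both basepoint and time varying; it is resolved by the sandwiching argument above, which uses only the separate continuity in $p$ at each fixed $t_0$ afforded by Lemma~\ref{thcon}.
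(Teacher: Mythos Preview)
Your overall strategy coincides with the paper's: argue by contradiction, blow up around the moving basepoints $p_j$, show the limit is a normalized gradient shrinker with $\sW(\ginf)=0$, hence flat, contradicting $\abs{\Rm}\geq\delta$ at the basepoint. Your sandwiching argument for the joint limit $\theta_{p_j}(t_j)\to\Theta(p)=0$ is exactly the one the paper gives (and the paper also singles it out as the key step). The invocations of Corollary~\ref{convnorm} (second case), Theorem~\ref{noloss} with variable basepoints, and Lemma~\ref{solprop} Point~5 are all correct.

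There is, however, a genuine gap in how you deduce the soliton equation for the limit. You write that ``following the proof of Theorem~\ref{mainthm} Point~2 verbatim \ldots\ the uniform bound $-\theta_{p_j}(0)/2\leq C(M,g_0,T)$ from~\eqref{perelman4} \ldots\ forces the rescaled $L^2$ integrand to vanish in the limit.'' This is not right. The bound~\eqref{perelman4} only says that for \emph{each} $j$ the total time integral is $\leq C$; since the integrand depends on $j$ through $p_j$, a uniform bound on the whole integral says nothing about the portion over the shrinking window $[T+S_0/\lambda_j,\,T+S_1/\lambda_j]$. The proof of Theorem~\ref{mainthm} Point~2 does not use~\eqref{perelman4} at all: it uses that both $\theta_p(T+S_1/\lambda_j)$ and $\theta_p(T+S_0/\lambda_j)$ converge to the same limit $\Theta(p)$, so their difference --- which equals the integral in question --- tends to zero. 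With variable $p_j$ this step requires precisely the joint limit $\theta_{p_j}(t'_j)\to 0$ for \emph{any} $t'_j\to T$, which is your sandwiching argument. So the fix is simply to reorder: prove the sandwiching lemma first (it applies to $T+S_0/\lambda_j$ and $T+S_1/\lambda_j$ just as well as to $t_j$), and then the Point~2 computation goes through verbatim. Alternatively, you can do what the paper does: once $\theta(p_j,t_j)\to 0$ is known, bound $\int_{s_j}^\infty\!\int_M\abs{\cdot}^2\ldots\,ds\leq -\theta(p_j,t_j)\to 0$ and use a uniform bound on the $s$--derivative of the inner integral to get smallness at the single time $s_j$ itself.
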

\begin{proof}
First, we show that for every sequence $p_i\to p$ and $t_i\to T$ we have $\theta(p_i,t_i)\to 0=\Theta(p)$. Recall that $p$ is a continuity point of $\Theta:M\to\RR$ by Lemma~\ref{thcon}. Now, suppose that there exist some sequence $(p_i,t_i)$ and $\alpha>0$ such that $\theta(p_i,t_i)\to-\alpha$. For every $j\in\NN$ there exists $i_0$ such that $t_i\geq t_j$ for every $i>i_0$, hence
$\theta(p_i,t_i)\geq\theta(p_i,t_j)$. Sending $i\to\infty$ we then get
$-\alpha\geq\theta(p,t_j)$. This is clearly a contradiction, as
sending now $j\to\infty$, we have $\theta(p,t_j)\to\Theta(p)=0$.

Assume now that $p\in M$ is a Type~I singular point and $p_i\to p$, $t_i\to T$
are chosen such that for some constant $\delta>0$ there holds
$\abs{\Rm(p_i,t_i)}\geq\frac{\delta}{T-t_i}$. We consider the families
of rescaled pointed manifolds $(M,\wt{g}(s_i),p_i)$ with
$\wt{g}(s_i)=\frac{g(t_i)}{T-t_i}$ and
$s_i=-\log(T-t_i)$.

Since $\Theta(p_i)\leq 0$, we have for every $\eps>0$ (by rescaling the
integrated entropy formula~\eqref{perelman3}), setting
$\wt{f}_{p_i}(s_i)=\wh{f}_{p_i,T}(t_i)$
\begin{align*}
\eps &\geq-\theta(p_i,t_i)\geq \Theta(p_i)-\theta(p_i,t_i)\\
&=2\int_{s_i}^{+\infty}\int_M\Abs{\Rc_{\wt{g}(s)}+\nabla^2\wt{f}_{p_i}(s)- 
\frac{\wt{g}(s)}{2}}^2_{\wt{g}(s)}\frac{e^{-\wt{f}_{p_i}(s)}}{(4\pi)^{n/2}}
\,dV_{\wt{g}(s)}\,ds.
\end{align*}
Hence, by the uniform curvature estimates of Section~\ref{secfpT}, we have
\begin{equation*}
\Abs{\frac{d\,}{ds}\int_M\Abs{\Rc_{\wt{g}(s)}+\nabla^2\wt{f}_{p_i}(s)-
\frac{\wt{g}(s)}{2}}^2_{\wt{g}(s)}\frac{e^{-\wt{f}_{p_i}(s)}}{(4\pi)^{n/2}}
\,dV_{\wt{g}(s)}}\leq C
\end{equation*}
where $C=C(M,g_0,T)$ is a positive constant independent of $s$. This yields
\begin{equation*}
\eps \geq
\frac{1}{2C}\bigg(\int_M\Abs{\Rc_{\wt{g}(s_i)}+\nabla^2\wt{f}_{p_i}(s_i)-
\frac{\wt{g}(s_i)}{2}}^2_{\wt{g}(s_i)}\frac{e^{-\wt{f}_{p_i}(s_i)}}{(4\pi)^{n/2}}\,dV_{\wt{g}(s_i)}\bigg)^2.
\end{equation*}
If we argue like we did in the proof of Theorem~\ref{mainthm} at
the beginning of this section, we can extract from the sequence of
pointed manifolds $(M,\wt{g}(s_i),p_i)$ and functions
$\wt{f}_{p_i}(s_i)$ a locally smoothly converging subsequence to some
limit manifold $(\Minf,\ginf,\pinf)$ and $\finf:\Minf\to\RR$. By lower
semicontinuity of the integral in the last estimate above, we conclude
that
\begin{equation*}
\eps\geq\frac{1}{2C}\bigg(\int_{\Minf}\Abs{\Rc_{\ginf}+\nabla^2\finf-\frac{\ginf}{2}}^2_{\ginf}
\frac{e^{-\finf}}{(4\pi)^{n/2}}\,dV_{\ginf}\bigg)^2,
\end{equation*}
for every $\eps>0$, hence $(\Minf,\ginf,\finf)$ is a gradient shrinking Ricci soliton.

Finally, by Theorem~\ref{noloss}, we have
\begin{equation*}
\sW(\ginf,\finf)=\lim_{i\to\infty}\theta(p_i,t_i)=0
\end{equation*}
and by Lemma~\ref{convnorm} the soliton is normalized. Then, we conclude by Point~5 of Lemma~\ref{solprop}, that 
the soliton $(\Minf,\ginf,\finf)$ is the flat $\RR^n$.

Since, by hypothesis, at the points $p_i$ the Riemann tensor
$\Rm_{\wt{g}(s_i)}$ of $(M,\wt{g}(s_i))$ satisfies
$\abs{\Rm_{\wt{g}(s_i)}(p_i)}_{\wt{g}(s_i)}\geq \delta>0$ for every
$i\in\NN$, it follows that the Riemann tensor of the limit manifold $(\Minf,\ginf)$ 
is not zero at the point $\pinf\in
\Minf$. Hence, we have a contradiction and $p\in M$ cannot be a singular
point of the flow.
\end{proof}

An easy corollary is that every Type~I singular point has to be an
$\RRR$--special Type~I singular point (see also Enders--M\"{u}ller--Topping~\cite{EMT11}, Section 3).
\begin{cor}\label{ppp2} 
Every Type~I singular point $p\in M$ is an $\RRR$--special
Type~I singular point and there is a central blow--up converging to a
non--flat gradient shrinking Ricci soliton, hence $\Theta(p)<0$.
\end{cor}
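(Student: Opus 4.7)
The plan is to combine Theorem~\ref{ppp1} with the central dynamical blow-up at $p$ provided by Theorem~\ref{mainthm}(1). First, since $\Theta(p) \leq 0$ always by Proposition~\ref{monoprop}, and Theorem~\ref{ppp1} says $\Theta(p) = 0$ forces $p$ to be a non-singular point, the contrapositive gives immediately $\Theta(p) < 0$ at every Type~I singular point.

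Second, I would apply Theorem~\ref{mainthm}(1) to the central blow-up $(M,\wt{g}(s),p)$ with basepoint $p$ itself. This yields a subsequence $s_j \to \infty$ along which $(M,\wt{g}(s_j),\wt{f}(s_j),p)$ converges smoothly in the pointed Cheeger--Gromov sense to a normalized gradient shrinking Ricci soliton $(\Minf,\ginf,\finf,\pinf)$. By Corollary~\ref{thetaprop}, this limit soliton has entropy $\sW(\ginf) = \Theta(p) < 0$. Invoking Point~5 of Lemma~\ref{solprop}, with the Ricci lower bound hypothesis being automatic here by the Remark after the lemma (blow-ups of compact Ricci flows have $\RRR_{\ginf} \geq 0$ from the standard bound $\RRR^{\min}_{g(t)} \geq \RRR^{\min}_{g(0)}$), the strict inequality $\sW(\ginf) < 0$ rules out the Gaussian soliton. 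Hence the central blow-up converges to a non-flat gradient shrinking Ricci soliton.

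Third, for the $\RRR$-special property, Point~1 of Lemma~\ref{solprop} upgrades non-flatness to strict positivity of the scalar curvature everywhere on $\Minf$, and in particular $\RRR_{\ginf}(\pinf) =: 2\delta > 0$. The smooth convergence at the basepoint gives $\RRR_{\wt{g}(s_j)}(p) \to \RRR_{\ginf}(\pinf)$, so $\RRR_{\wt{g}(s_j)}(p) \geq \delta$ for $j$ large. Unraveling the rescaling $\wt{g}(s) = g(t)/(T-t)$ with $s = -\log(T-t)$, one has $\RRR_{\wt{g}(s)}(p) = (T-t)\RRR_{g(t)}(p)$, hence
\begin{equation*}
\RRR_{g(t_j)}(p) \geq \frac{\delta}{T-t_j}
\end{equation*}
for the corresponding times $t_j \to T$, which is precisely the $\RRR$-special Type~I condition. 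There is no substantive obstacle: every ingredient is already in place from the preceding sections; the only minor care is in selecting which version of the lower bound on $\RRR_{\ginf}$ one invokes (Zhang's dichotomy from Point~1 of Lemma~\ref{solprop}, or alternatively a direct strong maximum principle argument as indicated in the Remark after Lemma~\ref{solprop}).
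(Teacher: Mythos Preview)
Your proof is correct and is essentially the contrapositive of the paper's argument, using the same ingredients in reverse order. The paper argues by contradiction: if $p$ were not $\RRR$--special, then $\limsup_{t\to T}(T-t)\RRR(p,t)=0$, so any central blow--up limit has $\RRR_{\ginf}(\pinf)=0$, hence is flat by Point~1 of Lemma~\ref{solprop}, hence $\Theta(p)=\sW(\ginf)=0$, contradicting Theorem~\ref{ppp1}. You run the same chain forwards: Theorem~\ref{ppp1} gives $\Theta(p)<0$, so $\sW(\ginf)<0$, so the limit is non--flat, so $\RRR_{\ginf}(\pinf)>0$, so $p$ is $\RRR$--special.

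One small imprecision: your appeal to Point~5 of Lemma~\ref{solprop} to rule out the Gaussian soliton is unnecessary, and your justification of its Ricci lower bound hypothesis via the scalar curvature bound $\RRR_{\ginf}\geq 0$ conflates two different things. You only need that a flat normalized shrinker is the Gaussian soliton with $\sW=0$, which follows from Point~1 (or directly from $\nabla^2\finf=\ginf/2$); alternatively, the Ricci lower bound for Point~5 is automatic here simply because the Type~I condition gives a uniform bound on $\abs{\Rm_{\ginf}}$. This does not affect the validity of your argument.
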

\begin{proof}
Assume that  $p$ is not an $\RRR$--special Type~I singular point, then
\begin{equation*}
\limsup_{t\to T}(T-t)\RRR(p,t)=0.
\end{equation*}
Hence, any central blow--up limit $(\Minf,\ginf,\finf)$ will satisfy
$\RRR_{\ginf}(p)=0$. Being a gradient shrinking Ricci soliton, by Point~1 of 
Lemma~\ref{solprop}, it must be the flat $\RR^n$. But then
$\Theta(p)=0$ and $p$ cannot be a Type~I singular point of the flow,
a contradiction.
\end{proof}

Using the gap result of Yokota (Point~\ref{sol4} in Lemma~\ref{solprop}), we also obtain the following result.
\begin{cor} 
If $p\in M$ satisfies $\Theta(p)=0$ then there exists a neighborhood
$U\subset M$ of the point $p$ such that $\Theta$ is identically
zero in $U$. As a consequence, the complement of the set of the
Type~I singular points is open.
\end{cor}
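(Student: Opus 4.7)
The plan is to establish the two assertions in sequence. For the first, I would argue by contradiction: suppose $\Theta(p)=0$ while there exists a sequence $p_i\to p$ with $\Theta(p_i)<0$. For each $i$, Theorem~\ref{mainthm} produces a blow--up at $p_i$ converging to some normalized gradient shrinking Ricci soliton, and Corollary~\ref{thetaprop} identifies its $\sW$--entropy with $\Theta(p_i)$. In the compact Type~I setting the bound $\abs{\Rm_{\wt{g}(s)}}_{\wt{g}(s)}\leq C_{\II}$ persists under rescaling and passes to Cheeger--Gromov limits, so every such limit soliton has Ricci curvature bounded from below by a constant depending only on $n$ and $C_{\II}$. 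The gap statement in Point~\ref{sol4} of Lemma~\ref{solprop} then applies uniformly along this sequence: since the $\sW$--entropy of each limit equals $\Theta(p_i)<0$, a single dimensional constant $\eps_n>0$ yields $\Theta(p_i)\leq-\eps_n$ for all $i$.

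On the other hand, Lemma~\ref{thcon} asserts that any point with vanishing $\Theta$ is a continuity point of the lower semicontinuous function $\Theta:M\to\RR$, so $\Theta(p_i)\to\Theta(p)=0$. This contradicts $\Theta(p_i)\leq-\eps_n<0$ and proves the first part.

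For the second assertion I would reduce it to the identification of the complement of the Type~I singular set with $\{q\in M:\Theta(q)=0\}$, after which openness is exactly the content of the first part. The inclusion $\{\Theta=0\}\subseteq\{\text{not Type~I singular}\}$ is Theorem~\ref{ppp1}. For the opposite inclusion I would fix $q$ with $\Theta(q)<0$ and apply Theorem~\ref{mainthm} at $q$ to obtain a normalized shrinker $(\Minf,\ginf,\finf)$ with $\sW(\ginf)=\Theta(q)<0$. Point~\ref{sol4} of Lemma~\ref{solprop} then rules out the flat $\RR^n$, and the first point of the same lemma forces $\RRR_{\ginf}>0$ everywhere, in particular $\RRR_{\ginf}(\pinf)>0$. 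Tracking this back through the blow--up rescaling $g_j(-1)=\lambda_j g(t_j)$ with $t_j=T-\lambda_j^{-1}$ gives $(T-t_j)\RRR_{g(t_j)}(q)\to\RRR_{\ginf}(\pinf)>0$, hence $(T-t_j)\abs{\Rm_{g(t_j)}(q)}_{g(t_j)}$ stays bounded away from zero, exhibiting $q$ as a (special) Type~I singular point.

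The only genuinely delicate step --- and the one I expect to require the most care --- is the uniformity of Yokota's gap along the sequence of limit shrinkers in Part~1. This amounts to checking that the Ricci lower bound on each shrinker is uniform in $i$, which in turn is inherited from the Type~I curvature bound on the rescaled flows via Cheeger--Gromov convergence. Once this is in place, both assertions follow essentially formally from the monotonicity and semicontinuity already available for $\Theta$ together with the blow--up machinery of Theorem~\ref{mainthm}.
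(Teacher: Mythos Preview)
Your argument is correct and follows the paper's approach: both combine the lower semicontinuity of $\Theta$ (Lemma~\ref{thcon}) with Yokota's gap (Point~\ref{sol4} of Lemma~\ref{solprop}), which forbids values in $(-\eps_n,0)$ and hence forces $\Theta\equiv 0$ near any zero; your treatment of the second assertion is more explicit than the paper's (which simply defers to the surrounding identification $\Sigma_{\II}=\Sigma_\Theta$), but the content is the same. Your concern about the uniformity of $\eps_n$ is unnecessary, since by the statement of Lemma~\ref{solprop} this constant is purely dimensional --- once each limit shrinker has Ricci bounded below (which the Type~I curvature bound guarantees), the same $\eps_n$ applies to all of them automatically.
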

\begin{proof}
Since the function $\Theta$ is lower semicontinuous and cannot attain
values between $-\eps_n$ and zero, it must be constant, hence zero, in
a neighborhood of the point $p\in M$.
\end{proof}

So far, we considered the following notions of singular points:
\begin{itemize}
\item $\Sigma_{\II}$ is the set of Type~I singular points.
\item $\Sigma_s$ is the set of special Type~I singular points.
\item $\Sigma_{\RRR}$ is the set of $\RRR$--special Type~I singular points.
\item $\Sigma_\Theta$ is the set of points $p\in M$ where $\Theta(p)<0$.
\item $\Sigma_{\eps_n}$  is the set of points $p\in M$ where $\Theta(p)\leq-\eps_n$.
\end{itemize}
Trivial inclusions are
$\Sigma_{\RRR}\subset\Sigma_s\subset\Sigma_{\II}$ and
$\Sigma_{\eps_n}\subset\Sigma_\Theta$. The previous corollary says that the
last two sets actually coincide, Theorem~\ref{ppp1} shows that
$\Sigma_{\II}=\Sigma_\Theta$ and Corollary~\ref{ppp2} proves that
$\Sigma_{\RRR}=\Sigma_{\II}$, hence \emph{all these sets coincide} and they
are all contained in the set $\Sigma$ of points $p\in M$ for which
there exists a sequence of points $p_i\to p$ and times $t_i\to T$ such
that $\abs{\Rm_{g(t_i)}(p_i,t_i)}\to+\infty$, that is, the most general set of
singular points.

Theorem~3.3 in~\cite{EMT11} shows that also $\Sigma$ coincides with all these sets.
\begin{prop}[see {\cite[Thm.~3.3]{EMT11}}]
We have $\Sigma_{\RRR}=\Sigma_s=\Sigma_{\II}=\Sigma_{\eps_n}=\Sigma_\Theta=\Sigma$, which is a closed subset of $M$.
\end{prop}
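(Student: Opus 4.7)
The plan is to assemble the chain of set equalities by combining the earlier results of this section with one genuinely new observation identifying $\Sigma_\Theta$ with $\Sigma_{\RRR}$, to bring in $\Sigma$ itself via Theorem~3.3 of~\cite{EMT11}, and finally to read off closedness from the lower semicontinuity of $\Theta$ established in Lemma~\ref{thcon}.

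First I would record the trivial inclusions $\Sigma_{\RRR}\subset\Sigma_s\subset\Sigma_{\II}\subset\Sigma$ and $\Sigma_{\eps_n}\subset\Sigma_\Theta$, and then chain the nontrivial partial results already proved in this section: Corollary~\ref{ppp2} supplies $\Sigma_{\II}\subset\Sigma_{\RRR}$, so $\Sigma_{\RRR}=\Sigma_s=\Sigma_{\II}$; the contrapositive of Theorem~\ref{ppp1} supplies $\Sigma_{\II}\subset\Sigma_\Theta$; and the corollary immediately preceding the proposition, which uses the Yokota gap constant $\eps_n$ from Point~\ref{sol4} of Lemma~\ref{solprop} to rule out $\Theta$-values in $(-\eps_n,0)$, produces $\Sigma_\Theta=\Sigma_{\eps_n}$.

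The one genuinely new step is the inclusion $\Sigma_\Theta\subset\Sigma_{\RRR}$, which closes the loop among the five $\Sigma$-variants. For $p$ with $\Theta(p)<0$, Theorem~\ref{mainthm} produces a subsequential blow--up limit $(\Minf,\ginf,\finf,\pinf)$ which, by Corollary~\ref{thetaprop}, satisfies $\sW(\ginf)=\Theta(p)<0$ and is therefore non--flat; Point~1 of Lemma~\ref{solprop} (Zhang's theorem) then forces $\RRR_{\ginf}(\pinf)>0$, and smooth pointed Cheeger--Gromov convergence at the distinguished basepoint gives $(T-t_j)\RRR_{g(t_j)}(p)=\RRR_{\wt{g}(s_j)}(p)\to\RRR_{\ginf}(\pinf)>0$ along the chosen subsequence, so $p$ is $\RRR$--special. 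The remaining and main obstacle is the identification $\Sigma\subset\Sigma_{\II}$, which I do not see how to extract from the entropy framework developed in this paper: it is invoked as Theorem~3.3 of~\cite{EMT11}, where under the Type~I assumption an unscaled curvature blow--up at points converging to $p$ is upgraded to a scaled one via Perelman's pseudolocality theorem.

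Finally, closedness follows for free once all the identifications are in place: $\Sigma=\Sigma_{\eps_n}=\{p\in M : \Theta(p)\leq -\eps_n\}$ is the sublevel set of the lower semicontinuous function $\Theta:M\to\RR$, hence closed in $M$.
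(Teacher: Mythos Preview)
Your proposal is correct and follows the same route as the paper's discussion preceding the proposition: assemble the earlier results of this section for the five $\Sigma$--variants and invoke Theorem~3.3 of~\cite{EMT11} for the remaining inclusion $\Sigma\subset\Sigma_{\II}$. Note that your ``genuinely new step'' $\Sigma_\Theta\subset\Sigma_{\RRR}$ is already implicit in the proof of Corollary~\ref{ppp2} (the chain ``not $\RRR$--special $\Rightarrow$ central blow--up flat $\Rightarrow\Theta(p)=0$'' there is precisely its contrapositive), while your explicit closedness argument via $\Sigma_{\eps_n}$ being a sublevel set of the lower semicontinuous function $\Theta$ is a clean addition that the paper does not spell out.
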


Combining this result with Corollary~\ref{ppp2}, Theorem~\ref{nontriv} follows.

\makeatletter
\def\@listi{%
  \itemsep=0pt
  \parsep=1pt
  \topsep=1pt}
\makeatother
{\fontsize{10}{11}\selectfont

}
\vspace{10mm}

Carlo Mantegazza\\
{\sc Scuola Normale Superiore di Pisa, 56126 Pisa, Italy}\\

Reto M\"uller\\
{\sc Imperial College London, London SW7 2AZ, United Kingdom}

\begin{thebibliography}{99}

\bibitem{Ache12}
A.~Ache.
On the uniqueness of asymptotic limits of the Ricci flow.
Preprint 2012, {\em ArXiv:1211.3387v1}.

\bibitem{CHI}
H.--D.~Cao, R.~Hamilton, and T.~Ilmanen.
Gaussian densities and the stability for some Ricci solitons.
Preprint 2004, {\em ArXiv:math/0404165v1}.

\bibitem{CZ10}
H.--D.~Cao and D.~Zhou.
On complete gradient shrinking Ricci solitons.
{\em J. Diff. Geom.}, 85:175--185, 2010.

\bibitem{CZ11}
X.~Cao and Q.~Zhang.
The conjugate heat equation and ancient solutions of the Ricci flow.
{\em Adv. Math.}, 228(5): 2891--2919, 2011.

\bibitem{CN08}
J.~Carrillo and L.~Ni.
Sharp logarithmic Sobolev inequalities on gradient solitons and applications.
{\em Comm. Anal. Geom.}, 17: 721--753, 2009.

\bibitem{RFTA3}
B.~Chow et al.
The Ricci flow: techniques and applications. Part III. Geometric--analytic aspects.
{\em Mathematical Surveys and Monographs}, 163, AMS, 2010.

\bibitem{CLN06}
B.~Chow, P.~Lu and L.~Ni.
Hamilton's Ricci flow.
{\em Graduate Studies in Mathematics}, AMS, 2006.

\bibitem{EMT11}
J.~Enders, R.~M\"{u}ller and P.~Topping.
On Type~I singularities in Ricci flow.
{\em Comm. Anal. Geom.}, 19: 905--922, 2011.

\bibitem{Ham82}
R.~S.~Hamilton.
Three--manifolds with positive Ricci curvature.
{\em J. Diff. Geom.}, 17:255--306, 1982.

\bibitem{Ham95}
R.~S.~Hamilton.
A compactness property for solutions of the Ricci flow.
{\em Amer. J. Math.}, 117:545--572, 1995.

\bibitem{HM11}
R.~Haslhofer and R.~M\"{u}ller.
A compactness theorem for complete Ricci shrinkers.
{\em Geom. Funct. Anal.}, 21: 1091--1116, 2011.

\bibitem{HN12}
H.--J.~Hein and A.~Naber.
New logarithmic Sobolev inequalities and an $\eps$-regularity theorem for the Ricci flow.
Preprint 2012, {\em ArXiv:1205.0380v1}.

\bibitem{Hui90}
G.~Huisken.
Asymptotic behavior for singularities of the mean curvature flow.
{\em J. Diff. Geom.}, 31:285--299, 1990.

\bibitem{LS10}
N.~Le and N.~Sesum.
Remarks on curvature behavior at the first singular time of the Ricci flow.
Preprint 2010, {\em ArXiv:1005.1220v2}.

\bibitem{MT07}
J.~Morgan and G.~Tian.
Ricci flow and the Poincar\'{e} conjecture.
Clay Math. Monographs 3, AMS, Providence, 2007.

\bibitem{Mul06}
R.~M\"{u}ller.
Differential Harnack inequalities and the Ricci flow.
{\em EMS Series of Lectures in Mathematics}, 2006.

\bibitem{Mun09}
O.~Munteanu.
The volume growth of complete gradient shrinking Ricci solitons.
Preprint 2009, {\em ArXiv:0904.0798v2}.

\bibitem{Nab10}
A.~Naber.
Noncompact shrinking 4--solitons with nonnegative curvature.
{\em J. Reine Angew. Math.}, 645: 125--153, 2010.

\bibitem{Ni06}
L.~Ni.
A note on Perelman's LYH--type inequality.
{\em Comm. Anal. Geom.} 14:883--905, 2006.

\bibitem{Per02}
G.~Perelman.
The entropy formula for the Ricci flow and its geometric applications.
Preprint 2002, {\em arXiv:math/0211159v1}.

\bibitem{Per03}
G.~Perelman.
Ricci flow with surgery on three--manifolds .
Preprint 2003, {\em arXiv:math/0303109v1}.

\bibitem{PRS11}
S.~Pigola, M.~Rimoldi and A.~G.~Setti.
Remarks on non--compact gradient Ricci solitons.
{\em Math. Z.}, 268(3--4): 777--790, 2011.

\bibitem{Shi89}
W.--X.~Shi.
Deforming the metric on complete Riemannian manifolds.
{\em J. Diff. Geom.}, 30: 223--301, 1989.

\bibitem{Sto94}
A.~Stone.
A density function and the structure of singularities of the mean curvature flow.
{\em Calc. Var. PDE}, 2: 443--480, 1994.

\bibitem{SW10}
S.~Sun and Y.~Wang.
On the K\"{a}hler--Ricci flow near a K\"{a}hler--Einstein metric.
Preprint 2010, {\em ArXiv:1004.2018v2}.

\bibitem{Yok09}
T.~Yokota.
Perelman's reduced volume and a gap theorem for the Ricci flow.
{\em Comm. Anal. Geom.}, 17:227--263, 2009.

\bibitem{Zha09}
Z.--H.~Zhang.
On the completeness of gradient Ricci solitons.
{\em Proc. Amer. Math. Soc.}, 137(8): 2755--2759, 2009.

\end{thebibliography}
\end{document}